\documentclass{article}
\usepackage{amsfonts,amssymb,amsmath}
\usepackage{amsthm}
\usepackage{authblk}
\usepackage{graphicx} 
\usepackage{xcolor}
\usepackage[margin=2.6cm]{geometry}
\usepackage{ulem}
\newtheorem{theorem}{Theorem}[section]
\newtheorem{definition}[theorem]{Definition}
\newtheorem{proposition}[theorem]{Proposition}
\newtheorem{remark}[theorem]{Remark}

\numberwithin{equation}{section}

\renewcommand{\arraystretch}{1}
\title{Structure-preserving preconditioning of discrete space-fractional diffusion equations with variable coefficient and $\theta$-Method }
\author[1]{Muhammad Faisal Khan}
\author[1]{Asim Ilyas}
\author[2]{Rolf Krause}
\author[1,3]{Stefano Serra-Capizzano}
\author[4]{Cristina Tablino-Possio}
\affil[1]{Department of Science and High Technology, University of Insubria, Italy; email: ailyas1@uninsubria.it; mfkhan@uninsubria.it; s.serracapizzano@uninsubria.it}
\affil[2]{Division Computer, Electrical and Mathematical Sciences and Engineering (CEMSE), King Abdullah University of Science and Technology (KAUST), Saudi Arabia; email: rolf.krause@kaust.edu.sa}
\affil[3]{Department of Information Technology, University of Uppsala, Sweden; email: stefano.serra@it.uu.se}
\affil[4]{Department of Mathematics and Applications, University of Milano - Bicocca, Italy; email: cristina.tablinopossio@unimib.it}
\date{} 
\begin{document}
\maketitle
\begin{abstract}
This paper studies the spectral properties of large matrices and preconditioning of linear systems, arising from the finite difference discretization of a time-dependent space-fractional diffusion equation with a variable coefficient $a(x)$ defined on $\Omega \subset \mathbb{R}^{d}$, $d=1,2$. The model involves a one-sided Riemann--Liouville fractional derivative multiplied by the function $a$, discretized using the shifted Grünwald formula in space and the $\theta$-method in time. The resulting all-at-once systems exhibit a $(d+1)$-level Toeplitz-like matrix structure, with $d=1,2$ denoting the space dimension, while the additional level is due to the time variable.
\par
A preconditioning strategy is developed based on the structural properties of the discretized operator. Using the generalized locally Toeplitz (GLT) theory, we analyze the spectral distribution of the unpreconditioned and preconditioned matrix sequences. The main novelty is that the analysis fully covers the case where the variable coefficient $a$ is nonconstant. Numerical results are provided to support the GLT based theoretical findings, and some possible extensions are briefly discussed.
\end{abstract}
\section{Introduction}
In this work, we focus on solving the following class of evolutionary partial differential equations (PDEs)
\begin{equation}\label{eq1}
\begin{cases}
\displaystyle \frac{\partial}{\partial t}u(x_{1},x_{2},t) = \mathcal{L} u(x_{1},x_{2},t) + f(x_{1},x_{2},t), & (x_{1},x_{2},t) \in \Omega \times (0,T],  \\
u(x_{1},x_{2},t) = 0, & (x_{1},x_{2}) \in \partial \Omega,\; t \in [0,T], \\
u(x_{1},x_{2},0) = \phi(x_{1},x_{2}), & (x_{1},x_{2}) \in \Omega,
\end{cases}
\end{equation}
where \( \Omega =   {(a_1,b_1) \times (a_2,b_2)} \subset \mathbb{R}^{2} \), \( f(x_{1},x_{2},t) \) is a given source term,  \( u(x_{1},x_{2},t) \) denotes the unknown solution and \( \phi(x_{1},x_{2}) \) is the prescribed initial condition. The form of equation \eqref{eq1} encompasses various PDE models depending on the choice of the spatial operator $\mathcal{L}$, including classical heat equations \cite{hon2024, lin2021, mcdonald2018}, space-fractional diffusion models \cite{donatelli2016, she2023, LinNgSun2018}, and other nonlocal dynamics. \\
In the present paper, we consider the following specific form of the operator $\mathcal{L} = a(x_{1},x_{2}) \,   {\left({}_{a_1} D_{x_{1}}^{\alpha_1}
+ {}_{a_2} D_{x_{2}}^{\alpha_2} \right)}$,
where the function \( a(x_{1},x_{2})  \) is a smooth variable diffusion coefficients,   { \( {}_{a_1} D_{x_{1}}^{\alpha_1} \) } denotes the Riemann-Liouville fractional derivative of order   {$\alpha_1 \in (1,2)$ with respect to $x_1$} and   {\( {}_{a_2} D_{x_{2}}^{\alpha_2} \)} denotes the Riemann-Liouville fractional derivative of order   {$\alpha_2\in (1,2)$ with respect to $x_2$}.
The Riemann--Liouville fractional derivative   {in the one-dimensional setting} is defined as follows
\begin{equation}\label{eq3}
{}_a D_x^\alpha U(x) = \frac{1}{\Gamma(2 - \alpha)} \frac{d^2}{dx^2} \int_a^x \frac{U(\xi)}{(x - \xi)^{\alpha - 1}} \, d\xi,
\end{equation}
where \( \Gamma(\cdot) \) denotes the Gamma function.
\par
  {Thus, substituting} this choice of $\mathcal{L}$ into equation \eqref{eq1}, we obtain the following space-fractional diffusion equation (SFDE)
\begin{equation}\label{eq2}
\begin{cases}
\displaystyle \frac{\partial}{\partial t} u(x_{1},x_{2},t) = a(x_{1},x_{2})   {\left({}_{a_1} D_{x_{1}}^{\alpha_1}
+ {}_{a_2} D_{x_{2}}^{\alpha_2} \right)} u(x_{1},x_{2},t) + f(x_{1},x_{2},t), & (x_{1},x_{2},t) \in \Omega \times (0,T],  \\
u(x_{1},x_{2},t) = 0, & (x_{1},x_{2}) \in \partial \Omega,\; t \in [0,T], \\
u(x_{1},x_{2},0) = \phi(x_{1},x_{2}), & (x_{1},x_{2}) \in \Omega,
\end{cases}
\end{equation}
with the one-dimensional space version given by
\begin{equation}\label{eq2-1d}
\begin{cases}
\displaystyle \frac{\partial}{\partial t} u(x,t) = a(x) {}_a D_{x}^\alpha  u(x,t) + f(x,t), & (x,t) \in (a_1,b_1) \times (0,T],  \\
u(x,t) = 0, & x \in \partial (a_1,b_1),\; t \in [0,T], \\
u(x,0) = \phi(x), & x \in (a_1,b_1).
\end{cases}
\end{equation}
\\
The SFDE has become an important model for describing anomalous diffusion phenomena, particularly superdiffusive behavior, where a cloud of particles disperses more rapidly than predicted by classical diffusion models \cite{Meerschaert2002, Tadjeran2006}. In general, obtaining exact analytical solutions for such equations is quite difficult, especially when the diffusion coefficient varies spatially. To address the latter issue, a variety of numerical methods have been developed to approximate the solution of SFDEs such as \eqref{eq2}; see \cite{Tadjeran2006,Brociek2016,ChenDeng2014,MeerschaertTadjeran2006,Tadjeran2007,Tian2015,Wang2010}. However, due to the inherent nonlocality of the Riemann–Liouville (RL) fractional derivative, these discretization schemes typically lead to the formation of dense linear systems. Solving these dense systems with direct solvers becomes computationally prohibitive as the grid resolution increases. The observed challenge highlights the need of designing fast and efficient solvers for the resulting linear systems of the SFDE.\\
In the current work, we study SFDEs of the form \eqref{eq2}-\eqref{eq2-1d},  where the left-hand side features the time derivative, and the right-hand side involves a spatially varying coefficient $a$ multiplied by the left-sided Riemann–Liouville (RL) fractional derivative. To discretize the temporal component, we apply the $\theta$-scheme, which yields a flexible and widely used time integration framework. The resulting semi-discrete formulation leads to a linear system at each time step, where the structure of the coefficient matrix reflects the nature of the underlying operators: the variable coefficient $a$ gives rise to a diagonal matrix, while the discretization of the RL derivative contributes with a $d$-level nonsymmetric Toeplitz matrix, with $d=1$ for \eqref{eq2-1d} and $d=2$ for \eqref{eq2}.
Indeed we proceed by considering
\begin{itemize}
\item first the one-dimensional case of \eqref{eq2-1d},
\item second  the two-dimensional case of \eqref{eq2}.
\end{itemize}
By assembling the system across all time levels, we arrive at an all-at-once formulation, where the global linear system is expressed using Kronecker products. Specifically, for $d=1$, setting $N$ the number of uniformly spaced points and $M$ the number of uniformly spaced points, the matrix structure takes the form {$ G_{\alpha,N} \otimes I_{M} + I_{N} \otimes Q_{\theta,M},$
where \( Q_{\theta, M} \in \mathbb{R}^{M \times M} \) } is the time-stepping matrix derived from the \(\theta\)-scheme, and \( G_{\alpha,N} \) denotes the spatial operator, which takes the form of a diagonal-times–nonsymmetric Toeplitz matrix.
 The conditioning of the resulting matrix is strongly influenced by the discretization parameters, particularly by the ratio \(  \gamma=\Delta t / (\Delta x)^{\alpha} \). As this ratio increases, the system becomes increasingly ill-conditioned, posing significant challenges for iterative solvers.
\\
For $d=2$, setting   {$\textbf{n} = (N_1,N_2)$} the number of uniformly spaced points in the variable $x_j, j=1,2$, the structure is similar i.e.,
  {$G_{\boldsymbol{\alpha}, \textbf{n}} \otimes I_{M} + I_{\textbf{n}} \otimes Q_{\theta,M}$},
  {$G_{\boldsymbol{\alpha}, \textbf{n}}$} denoting the spatial operator, showing a two-level diagonal-times–nonsymmetric Toeplitz structure, {$I_{\textbf{n}}$ } being the identity of size $N_1N_2$. Also for $d=2$, the conditioning of the resulting matrix is strongly influenced by the discretization parameters, particularly by the ratios {\(  \gamma_1=\Delta t / (\Delta x_{1})  {^{\alpha_1}} \) and \(  \gamma_2=\Delta t / (\Delta x_{2})  {^{\alpha_2}} \)}. In this work, we restrict our attention to the case of a rectangular domain. However, the present analysis can be used in the more general setting of $\Omega \subset {[a_1,b_1]\times [a_2,b_2]}$, by using e.g. the immersed domain approach in \cite{Rosita-immersed}; see also references therein.
 \\
The aim of the present paper is twofold: first, to develop and analyze an effective preconditioning strategy specifically tailored for the one-sided SFDE; and second, to complete the theoretical framework by performing a comprehensive spectral and singular value analysis of both the original and preconditioned matrix sequences. Our analysis is based on the Generalized Locally Toeplitz (GLT) theory \cite{GaroniSerra2017,GaroniSerra2018}, which provides a powerful framework for describing the asymptotic spectral distribution of matrix sequences.
\par
The work is organized as follows. In Section \ref{sec:spectr-tools} we set the notation and we introduce the necessary tools, including the essentials of the GLT theory.
Section \ref{sec:analysis} contains the structural and spectral analysis of the relevant matrix sequences, while in Sections \ref{sec:prec} and \ref{sec:prec2D} we introduce proper preconditioners and we study their spectral behavior. In Section \ref{sec:num} we report spectral visualizations and numerical experiments, which are critically discussed.
Section \ref{sec:end} contains final remarks and a short discussion on open problems and future lines of investigation.
\section{Spectral tools}\label{sec:spectr-tools}
In this introductory section, we present the tools required to carry out the spectral analysis of the matrices arising in our study, derived in the theory of multilevel GLT matrix sequences. The non-block setting characterized by scalar-valued symbols is comprehensively discussed in the \cite{GaroniSerra2017,GaroniSerra2018}, whereas the block framework, associated with matrix-valued symbols, is treated in \cite{Barbarino2020a,Barbarino2020b}: the reader is referred to \cite{tutorial1,tutorial2} for application oriented tutorials on the practical use of the GLT theory. In the context of the current work, we focus on the case where the block size is $r=1$ and the dimensionality of the underlying spatial domain is $d=1,2$. Accordingly, our analysis is conducted within the framework of 1-level non-block GLT sequences. Nonetheless, we present the spectral tools in a general form to facilitate potential future extensions to higher-dimensional or block-structured problems.
\subsection{Notation and terminology}
\textbf{Matrices and matrix sequences}. Given a square matrix $A \in \mathbb{C}^{m \times m}$, we denote by $A^*$ its conjugate transpose and by $A^\dagger$ the Moore–Penrose pseudoinverse of $A$. Recall that $A^\dagger = A^{-1}$ whenever $A$ is invertible. The singular values and eigenvalues of $A$ are denoted respectively by $ \sigma_1(A), \cdots, \sigma_m(A) $ and $\lambda_1(A), \cdots, \lambda_m(A) $.\\
Regarding matrix norms, $\|\cdot\|$ refers to the spectral norm and for $1 \leq p \leq \infty$, the notation $\|\cdot\|_p$ stands for the Schatten $p$-norm defined as the $p$-norm of the vector of singular values. Note that the Schatten $\infty$-norm, which is equal to the largest singular value, coincides with the spectral norm $\|\cdot\|$; the Schatten 1-norm since it is the sum of the singular values is often referred to as the trace-norm; and the Schatten 2-norm coincides with the Frobenius norm. Schatten $p$-norms, as important special cases of unitarily invariant norms are treated in detail in a wonderful book by Bhatia \cite{Bhatia1997}.\\
Finally, the expression \textbf{matrix sequence} refers to any sequence of the form $\{A_n\}_n$, where $A_n$ is a square matrix of size $d_n$ with $d_n$ strictly increasing so that $d_n \to \infty$ as $n \to \infty$. A \textbf{r-block matrix sequence}, or simply a matrix sequence if $r$ can be deduced from context is a special $\{A_n\}_n$ in which the size of $A_n$ is $d_n = r\varphi_n$, with $r \geq 1 \in \mathbb{N}$ fixed and $\varphi_n \in \mathbb{N}$ strictly increasing.\\
\textbf{Multi-index notation}. To effectively deal with multilevel structures it is necessary to use multi-indices, which are vectors of the form $\textbf{i} = (i_1, \cdots, i_d) \in \mathbb{Z}^d$. The related notation is listed below.
\begin{itemize}
    \item $\mathbf{0}, \mathbf{1}, \mathbf{2}, \dots$ are vectors of all zeroes, ones, twos, etc.
    \item $\textbf{h} \leq \textbf{k}$ means that $h_r \leq k_r$ for all $r = 1, \cdots, d$. In general, relations between multi-indices are evaluated componentwise.
    \item Operations between multi-indices, such as addition, subtraction, multiplication, and division, are also performed componentwise.
    \item The multi-index interval $[\textbf{h}, \textbf{k}]$ is the set $\{\textbf{j} \in \mathbb{Z}^d : \textbf{h} \leq \textbf{j} \leq \textbf{k}\}$. We always assume that the elements in an interval $[\textbf{h}, \textbf{k}]$ are ordered in the standard lexicographic manner
  \[
\Biggr[ \cdots
\biggr[\Bigr[\bigl(j_1, \cdots, j_d\bigl)\Bigr]
_{j_d = h_d, \cdots, k_d}\biggr]
_{j_{d-1} = h_{d-1}, \cdots, k_{d-1}}
\cdots \Biggr]_{j_1 = h_1, \cdots, k_1}.
\]
    \item $\textbf{j} = \textbf{h}, \cdots, \textbf{k}$ means that $\textbf{j}$ varies from $\textbf{h}$ to $\textbf{k}$, always following the lexicographic ordering.
    \item $\textbf{m} \to \infty$ means that $\min(\textbf{m}) = \min_{j=1, \cdots, d} m_j \to \infty$.
    \item The product of all the components of $\textbf{m}$ is denoted as $\nu(\textbf{m}) := \prod_{j=1}^{d} m_j$.
\end{itemize}

A \textbf{multilevel matrix sequence} is a matrix sequence $\{A_{\textbf{n}} \}_n$ such that $n$ varies in some infinite subset of $\mathbb{N}$, $\textbf{n} = \textbf{n}(n)$ is a multi-index in $\mathbb{N}^d$ depending on $n$, and $\textbf{n} \to \infty$ when $n \to \infty$. This is typical of many approximations of differential operators in $d$ dimensions.\\
\textbf{Measurability.} All the terminology from measure theory, such as “measurable set”, “measurable function”, “almost everywhere (a.e.)”, etc., refers to the Lebesgue measure in $ \mathbb{R}^{t} $, denoted with $\mu_t $. A matrix-valued function $f : D \subseteq \mathbb{R}^{t} \to \mathbb{C}^{r \times r} $ is said to be measurable (resp., continuous,
Riemann-integrable, in $ L^p(D) $, etc.) if all its components $ f_{\alpha\beta} : D \to \mathbb{C} $,  $ \alpha, \beta = 1, \dots, r $, are measurable (resp., continuous, Riemann-integrable, in $ L^p(D) $, etc.). If $ f_m, f : D \subseteq \mathbb{R}^{t} \to \mathbb{C}^{r \times r} $ are measurable, we say that
$ f_m $ converges to $ f $ in measure (resp., a.e., in $ L^p(D) $, etc.) if $ (f_m)_{\alpha\beta} $ converges to
$ f_{\alpha\beta} $ in measure (resp., a.e., in $ L^p(D) $, etc.) for all $ \alpha, \beta = 1, \cdots, r $.
\subsection{Distribution and clustering}
This subsection presents the notions of distribution and clustering of a matrix sequence, both in the sense of the eigenvalues and singular values, along with some related definitions and results. In what follows, $ C_c(\mathbb{R}) $ is the space of continuous complex-valued functions with bounded support on $ \mathbb{R} $,
and $ C_c(\mathbb{C}) $ is defined in the same way.
\begin{definition}\label{def:sing and eig}
Let $\{A_n\}_n$ be a matrix sequence, where $A_n$ is of size $d_n$, and let $\psi: D \subset \mathbb{R}^{t} \to \mathbb{C}^{r \times r}$ be a measurable function defined on a set $D$ with $0 < \mu_t(D) < \infty$.
\begin{itemize}
    \item We say that $\{A_n\}_n$ has an \textbf{(asymptotic) singular value distribution} described by $\psi$, written as $\{A_n\}_n \sim_{\sigma} \psi$, if
    \begin{equation*}
        \lim_{n \to \infty} \frac{1}{d_n} \sum_{i=1}^{d_n} F(\sigma_i(A_n)) = \frac{1}{\mu_t(D)} \int_D \displaystyle\frac{\sum_{i=1}^{r} F(\sigma_i(\psi(\textbf{x})))}{r} d\textbf{x}, \quad \forall F \in C_c(\mathbb{R}).
    \end{equation*}
    \item We say that $\{A_n\}_n$ has an \textbf{(asymptotic) spectral (eigenvalue) distribution} described by $\psi$, written as $\{A_n\}_n \sim_{\lambda} \psi$, if
    \begin{equation*}
        \lim_{n \to \infty} \frac{1}{d_n} \sum_{i=1}^{d_n} F(\lambda_i(A_n)) = \frac{1}{\mu_t(D)} \int_D \displaystyle\frac{\sum_{i=1}^{r} F(\lambda_i(\psi(\textbf{x})))}{r} d\textbf{x}, \quad \forall F \in C_c(\mathbb{C}).
    \end{equation*}
\end{itemize}
If $\psi$ describes both singular value and eigenvalue distribution of $\{A_n\}_n$, we write $\{A_n\}_n \sim_{\sigma, \lambda} \psi$.
\end{definition}
The informal meaning behind the spectral distribution definition is the following: if $\psi$
 is continuous, then a suitable ordering of the eigenvalues $\{\lambda_j(A_n)\}_{j=1,\cdots,d_n}$,
 assigned in correspondence with an equispaced grid on $D$, reconstructs approximately the $r$ surfaces $
 \textbf{x} \to \lambda_i(\psi(\textbf{x})), i =1,...,r.$ For instance, in the simplest case where $t =1$ and $D =[a,b]$,
$d_n = nr$, the eigenvalues of $A_n$ are approximately equal - up to few potential outliers - to $
 \lambda_i (\psi(x_j)),$ where $x_j = a + j \frac{(b-a)}{n}, j =1,\cdots,n, i =1,\cdots,r.$ If $t =2$ and $D = [a_1,b_1] \times [a_2,b_2]$, $d_n = n^2r$, the eigenvalues of $A_n$ are approximately equal - again up to few potential outliers - to $
 \lambda_i (\psi(x_{j_1},y_{j_2})), i =1,\cdots,r,$
 where $ x_{j_1} = a_1 + j_1 \frac{(b_1 - a_1)}{n}, y_{j_2} = a_2 + j_2 \frac{(b_2 - a_2)}{n}, j_1, j_2 =1,\cdots,n.$ For $t \geq 3$, a similar reasoning applies.\\
 Now let us proceed to the notion of clustering. We recall that the $\varepsilon$-expansion of $S \subseteq \mathbb{C}$
 is the set $ B(S, \varepsilon) := \bigcup_{z\in S} B(z, \varepsilon),$ where $
 B(z, \varepsilon) := \{w \in \mathbb{C} : |w - z| < \varepsilon\}$
 is the complex disk with center $z$ and radius $\varepsilon > 0$. \\

\begin{definition}  Let $\{A_n\}_n$ be a matrix sequence, with $A_n$ of size $d_n$, and let $S \subseteq \mathbb{C}$ be nonempty and closed. The sequence $\{A_n\}_n$ is \textit{strongly clustered} at $S$ in the sense of the eigenvalues if, for any $\varepsilon > 0$ and as $n$ tends to infinity, the number of eigenvalues of $A_n$ outside $B(S, \varepsilon)$ is bounded by a constant $q_{\varepsilon}$ independent of $n$. In symbols,
\begin{equation*}
q_{\varepsilon}(n, S) := \# \{ j \in \{1, \cdots, d_n\} : \lambda_j(A_n) \notin B(S, \varepsilon) \} = O(1), \quad \text{as } n \to \infty.
\end{equation*}
The sequence $\{A_n\}_n$ is \textit{weakly clustered} at $S$ if, for each $\varepsilon > 0$,
\begin{equation*}
q_{\varepsilon}(n, S) = o(d_n), \quad \text{as } n \to \infty.
\end{equation*}
\end{definition}
A corresponding definition can be given for the singular values, with $S \subseteq \mathbb{R}^+$. If $\{A_n\}_n$ is strongly or weakly clustered at $S$ and $S$ is not connected, the connected components of $S$ are called \textit{sub-clusters}.
\par
The case of spectral single point clustering, where $S$ consists of a single complex number $s$, retains special significance in the theory of preconditioning.\\
In the following remark, we clarify the deep connection between the two definitions above; in fact, clustering can be seen as a special case of distribution. First, recall that given a measurable function $g : D \subseteq \mathbb{R}^{t} \to \mathbb{C}$, the \textit{essential range} of $g$ is defined as $\mathcal{E}\mathcal{R}(g) := \{ z \in \mathbb{C} : \mu_t (\{ g \in B(z, \varepsilon) \} )> 0 \text{ for all } \varepsilon > 0 \},$ where $\{ g \in B(z, \varepsilon) \} := \{ x \in D : g(x) \in B(z, \varepsilon) \}.$ Generalizing the concept to a matrix-valued function $\psi : D \subset \mathbb{R}^{t} \to \mathbb{C}^{r \times r}$, the essential range is the union of the essential ranges of the eigenvalue functions $\lambda_i(\psi)$, for $i = 1, \cdots, r$.
\par
\begin{remark}\label{rem:clustering}
If $\{A_n\}_n \sim_\lambda \psi$, then $\{A_n\}_n$ is weakly clustered at the essential range of $\psi$ (see \cite{Golinskii2007}, Theorem 4.2). Furthermore, if $\mathcal{E}\mathcal{R}(\psi) = \{s\}$ with $s$ a fixed complex number, then $\{A_n\}_n \sim_\lambda \psi$ iff $\{A_n\}_n$ is weakly clustered at $s$ in the sense of the eigenvalues.\\
These statements can be translated to the singular value setting as well, with obvious minimal modifications. For instance, if $\mathcal{E}\mathcal{R}(|\psi|) = s$ with $s$ a fixed nonnegative number, then $\{A_n\}_n \sim_\sigma \psi$ iff $\{A_n\}_n$ is weakly clustered at $s$ in the sense of the singular values.
\end{remark}
\subsection{Matrix Structures} \label{subsec:matrix_structures}
In this subsection we introduce the three types of matrix structures that constitute the basic building blocks of the GLT $*$-algebra.
\par
\textbf{Zero-distributed sequences:} Zero-distributed sequences are defined as matrix sequences $\{A_n\}_n$ such that $\{A_n\}_n \sim_\sigma 0$. Note that, for any $r \geq 1$, $\{A_n\}_n \sim_\sigma 0$ is equivalent to
$\{A_n\}_n \sim_\sigma O_r$, where $O_r$ is the $r \times r$ zero matrix. The following theorem, taken from \cite{GaroniSerra2017,  Serra2001}, provides a useful characterization for detecting this type of sequences. We use
the natural convention $1/\infty = 0$.
\begin{theorem}
Let $\{A_n\}_n$ be a matrix sequence, with $A_n$ of size $d_n$. Then
\begin{itemize}
    \item $\{A_n\}_n \sim_\sigma 0$ if and only if $A_n = R_n + N_n$ with ${\operatorname{rank}(R_n)}/{d_n} \to 0$ and $ ||N_n|| \to 0$ as $n \to \infty$;
    \item $\{A_n\}_n \sim_\sigma 0$ if there exists $p \in [1,\infty]$ such that $
        \frac{\|A_n\|_p}{(d_n)^{1/p}} \to 0$ as $n \to \infty.$
\end{itemize}
\end{theorem}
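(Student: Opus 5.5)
The plan is to argue directly from Definition~\ref{def:sing and eig}, with $\psi\equiv 0$. In that case $\{A_n\}_n\sim_\sigma 0$ means exactly that $\frac{1}{d_n}\sum_i F(\sigma_i(A_n))\to F(0)$ for every $F\in C_c(\mathbb{R})$. By a standard approximation argument this is equivalent to
\[
\frac{\#\{i:\sigma_i(A_n)>\varepsilon\}}{d_n}\to 0 \quad \text{for every } \varepsilon>0,
\]
that is, to weak clustering of the singular values at $0$, as in Remark~\ref{rem:clustering}. To see the equivalence, sandwich the indicator of $[0,\varepsilon]$ between continuous compactly supported functions. For the direction from distribution to counting, also use that $\sum_i F(\sigma_i)/d_n\to F(0)$ with $0\le F\le 1$ and $F(0)=1$ forces the mass above $\varepsilon$ to vanish. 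Both bullet points will be reduced to this counting criterion.

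\textbf{First bullet, ``only if''.} Take the SVD $A_n=U_n\Sigma_nV_n^*$ and choose $\varepsilon_n\to 0$ slowly enough that $r_n:=\#\{i:\sigma_i(A_n)>\varepsilon_n\}$ still satisfies $r_n/d_n\to 0$. Such a choice exists by a diagonal argument: for each $k$ there is $n_k$ beyond which the count above $1/k$ is below $d_n/k$, and we set $\varepsilon_n=1/k$ for $n_k\le n<n_{k+1}$. Then split $\Sigma_n$ into the part holding the singular values larger than $\varepsilon_n$ and the remainder. This gives $R_n$ of rank $r_n$ and $N_n$ with $\|N_n\|\le\varepsilon_n\to 0$.

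\textbf{First bullet, ``if''.} Use the Weyl interlacing inequality for singular values, $\sigma_{i+j-1}(R+N)\le\sigma_i(R)+\sigma_j(N)$. With $i=\operatorname{rank}(R_n)+1$ it gives $\sigma_{k}(A_n)\le\|N_n\|$ for all $k>\operatorname{rank}(R_n)$. Fix $\varepsilon>0$. For $n$ large we have $\|N_n\|<\varepsilon$, so at most $\operatorname{rank}(R_n)=o(d_n)$ singular values exceed $\varepsilon$, and the counting criterion gives the claim.

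\textbf{Second bullet.} For $p<\infty$, apply Markov/Chebyshev to the singular values:
\[
\#\{i:\sigma_i(A_n)>\varepsilon\}\,\varepsilon^p\le\sum_i\sigma_i(A_n)^p=\|A_n\|_p^p,
\]
so the proportion of singular values above $\varepsilon$ is at most $\varepsilon^{-p}\bigl(\|A_n\|_p/d_n^{1/p}\bigr)^p\to 0$. For $p=\infty$, the convention $1/\infty=0$ turns the hypothesis into $\|A_n\|\to 0$. This is the first bullet with $R_n=0$, or directly the statement that no singular value exceeds $\varepsilon$ eventually.

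The only delicate step is the equivalence between the test-function definition and the counting criterion, together with the diagonal choice of $\varepsilon_n$. Both are routine, but they need care with the $\varepsilon$-bookkeeping.
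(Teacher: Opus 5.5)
Your proof is correct and follows the standard argument in the sources the paper cites for this result; the paper itself states the theorem without proof. That argument reduces $\sim_\sigma 0$ to weak clustering of the singular values at $0$, thresholds the SVD at a slowly vanishing level $\varepsilon_n$ chosen diagonally, uses Weyl's inequality $\sigma_{i+j-1}(R_n+N_n)\le\sigma_i(R_n)+\sigma_j(N_n)$ for the converse, and bounds the number of singular values above $\varepsilon$ by $\varepsilon^{-p}\|A_n\|_p^p$ for the Schatten criterion — exactly your route (the only detail to write out is choosing the $n_k$ strictly increasing and assigning $\varepsilon_n$ arbitrarily for $n<n_1$).
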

\textbf{ Multilevel block Toeplitz matrices:} Given \(\textbf{n} \in \mathbb{N}^d \), a matrix of the form
\begin{equation*}
[A_{\textbf{i}-\textbf{j}}]_{\textbf{i,j=1}}^{\textbf{n}} \in \mathbb{C}^{\nu(\textbf{n})r \times \nu(\textbf{n})r},
\end{equation*}
with blocks \( A_\textbf{k} \in \mathbb{C}^{r \times r} \), \( \textbf{k} \in \{-(\textbf{n}-1), \dots, \textbf{n}-1\} \), is called a multilevel block Toeplitz matrix, or, more precisely, a \( d \)-level \( r \)-block Toeplitz matrix.\\
Given a matrix-valued function \( f : [-\pi,\pi]^d \to \mathbb{C}^{r \times r} \) belonging to \( L^1([- \pi, \pi]^d) \), the \( \mathbf{n} \)-th Toeplitz matrix associated with \( f \) is defined as
\begin{equation*}
T_\mathbf{n}(f):=[\hat{f}_{\mathbf{i-j}}]_{\mathbf{i,j=1}}^{\mathbf{n}} \in \mathbb{C}^{\nu(\mathbf{n})r \times \nu(\mathbf{n})r},
\end{equation*}
where
\begin{equation*}
\hat{f}_\mathbf{k} = \frac{1}{(2\pi)^d} \int_{[-\pi,\pi]^d} f(\mathbf{\theta}) e^{-\hat{\iota} (\mathbf{k}, \mathbf{\theta)}} d\mathbf{\theta} \in \mathbb{C}^{r \times r}, \quad \mathbf{k} \in \mathbb{Z}^d,
\end{equation*}
are the Fourier coefficients of \( f \), in which \( \hat{\iota} \) denotes the imaginary unit, the integrals are computed componentwise, and \( (\mathbf{k}, \mathbf{\theta}) = k_1\theta_1 + \cdots + k_d\theta_d \). Equivalently, \( T_\mathbf{n}(f) \) can be expressed as
\begin{equation*}
T_\mathbf{n}(f) =
\sum_{|j_1|<n_1} \cdots \sum_{|j_d|<n_d} [J_{n_1}^{(j_1)}
\otimes \cdots \otimes J^{(j_d)}_{n_d}] \otimes \hat{f}(j_1, \cdots, j_d),
\end{equation*}
where \( \otimes \) denotes the Kronecker tensor product between matrices, and \( J^{(l)}_{m} \) is the matrix of order \( m \) whose \( (i,j) \) entry equals 1 if \( i - j = l \) and zero otherwise.\\
 \( \{T_{\mathbf{n}}(f)\}_{\mathbf{n}\in\mathbb{N}^d} \) is the family of (multilevel block) Toeplitz matrices associated with \( f \), which is called the generating function.
\par
\textbf{Block diagonal sampling matrices:} Given $d \geq 1$, $\mathbf{n} \in \mathbb{N}^d$ and a function $a : [0,1]^d \to \mathbb{C}^{r \times r}$, we define the multilevel block diagonal sampling matrix $D_{\mathbf{n}}(a)$ as the block diagonal matrix
\begin{equation}\label{eq:dna}
    D_{\mathbf{n}}(a) = \underset{\mathbf{i=1,\dots,n}}{\operatorname{diag}} a \left( \mathbf{\frac{i}{n}} \right) \in \mathbb{C}^{\nu(\mathbf{n})r \times \nu(\mathbf{n})r}.
\end{equation}
\subsection{The $\ast$-Algebra of multilevel block GLT sequences}\label{subsec:glt_algebra}
Let $d,r \geq 1$ be fixed integers. A $d$-level $r$-block GLT sequence, or simply a GLT
sequence if we do not need to specify either $d$ or $r$, is a special $d$-level  $r$-block matrix sequence
equipped with a measurable function $
    \kappa :[0,1]^d \times[-\pi,\pi]^d \to \mathbb{C}^{r\times r}, d \geq 1,$
called the GLT symbol. The symbol is essentially unique, in the sense that if $\kappa, \varsigma$ are two symbols of the same
GLT sequence, then $\kappa = \varsigma$ a.e. We write \( \{A_n\}_n \sim_{\mathrm{GLT}} \kappa \) to denote that $\{A_n\}_n$ is a GLT
sequence with symbol $\kappa$.\\
It can be proven that the set of multilevel block GLT sequences is the $*$-algebra
generated by the three classes of sequences defined in Subsection \ref{subsec:matrix_structures}: zero-distributed,
multilevel block Toeplitz and block diagonal sampling matrix sequences. The GLT class
satisfies several algebraic and topological properties that are treated in detail in \cite{GaroniSerra2017, GaroniSerra2018, Barbarino2020a, Barbarino2020b}. Here, we focus on the main operative properties listed below that represent a complete characterization of GLT sequences, equivalent to the full constructive definition.
\par
\textbf{GLT 1.} If $\{A_n\}_n \sim_{\mathrm{GLT}} \kappa$ then $\{A_n\}_n \sim_{\sigma} \kappa$ in the sense of Definition \eqref{def:sing and eig}, with $t =2d$ and
$D=[0,1]^d \times[-\pi,\pi]^d$. If moreover each $A_n$ is Hermitian, then $\{A_n\}_n \sim_{\lambda} \kappa$, again in
the sense of Definition \eqref{def:sing and eig} with $t =2d$.
\par
\textbf{GLT 2.} We have
\begin{itemize}
    \item  $\{T_{\mathbf{n}}(f)\}_\mathbf{n} \sim_{\mathrm{GLT}} \kappa(\mathbf{x},\mathbf{\theta}) = f(\mathbf{\theta}) \quad \text{if } f : [-\pi,\pi]^d \to \mathbb{C}^{r\times r} \text{ is in } L^1([-\pi,\pi]^d); $
   \item $\{D_\mathbf{n}(a)\}_\mathbf{n} \sim_{\mathrm{GLT}} \kappa(\mathbf{x},\mathbf{\theta}) = a(\textbf{x}) \quad \text{if } a : [0,1]^d \to \mathbb{C}^{r\times r} \text{ is Riemann-integrable}; $
    \item $\{Z_n\}_n \sim_{\mathrm{GLT}} \kappa(\mathbf{x},\mathbf{\theta}) = O_r \text{ if and only if } \{Z_n\}_n \sim_{\sigma} 0.$
\end{itemize}
\textbf{GLT 3.} If $\{A_n\}_n \sim_{\mathrm{GLT}} \kappa$ and $\{B_n\}_n \sim_{\mathrm{GLT}} \varsigma$ then
\begin{itemize}
    \item $\{A_n^*\}_n \sim_{\mathrm{GLT}} \kappa^*; $
    \item $ \{\alpha A_n+\beta B_n\}_n \sim_{\mathrm{GLT}} \alpha \kappa+\beta \varsigma \quad \forall \alpha,\beta \in \mathbb{C}; $
    \item $ \{A_n B_n\}_n \sim_{\mathrm{GLT}} \kappa \varsigma; $
    \item $ \{A_n^\dagger\}_n \sim_{\mathrm{GLT}} \kappa^{-1} \quad \text{provided that } \kappa \text{ is invertible a.e.}$
\end{itemize}
\textbf{GLT 4.} $\{A_n\}_n \sim_{\mathrm{GLT}} \kappa$ if and only if there exist $\{B_{n,j}\}_n \sim_{\mathrm{GLT}} \kappa_j$ such that
$ \{\{B_{n,j}\}_n\}_j \xrightarrow{a.c.s. \text{ wrt } j} \{A_n\}_n \quad \text{and} \quad \kappa_j \to \kappa \text{ in measure}
$ (for the notion of approximating class of sequences and the related a.c.s. topology refer to \cite{GaroniSerra2017}).
\textbf{GLT 5} If $\{A_n\}_n \sim_{\mathrm{GLT}} \kappa$ and $A_n = X_n + Y_n$, where
\begin{itemize}
    \item each $X_n$ is Hermitian,
    \item $\| Y_n \|_2 = o(n^{1/2})$,
\end{itemize}
then
$
\{ A_n \}_n \sim_{\lambda} \kappa.
$
Note that, by \textbf{GLT 1}, it is always possible to obtain the singular value distribution from the GLT symbol, while the eigenvalue distribution can be deduced only if the involved
matrices are Hermitian. However, interesting tools are available for matrix sequences in which the non-Hermitian part is somehow negligible; see \cite{Golinskii2007,BarbarinoSerra2020} and references therein.
The main result is reported below; more advanced tools can be found in \cite{BarbarinoSerra2020}.
\begin{theorem}{\rm \cite[Corollary 4.1]{GaroniSerra2017} \cite[Corollary 2.3]{GaroniSerra2018} }
Let \( \{X_n\}_n \), \( \{Z_n\}_n \) be matrix sequences, with \( X_n, Z_n \) of size \( d_n \), and set \( Y_n = X_n + Z_n \). Assume that the following conditions are met
\begin{enumerate}
    \item \( \|X_n\|, \|Z_n\| \leq C \) for all \( n \), where \( C \) is a constant independent of \( n \);
    \item every \( X_n \) is Hermitian and \( \{X_n\}_n \sim_{\lambda} \kappa \);
    \item \( \|Z_n\|_1 = o(d_n) \).
\end{enumerate}
Then \( \{Y_n\}_n \sim_{\lambda} \kappa \). Moreover, if \( \|Z_n\|_1 = O(1) \), the range of \( \kappa \) is a strong cluster for the eigenvalues of \( \{Y_n\}_n \).
\end{theorem}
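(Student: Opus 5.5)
The plan is to compare the spectrum of \(Y_n\) with that of the Hermitian matrix \(X_n\), treating \(Z_n\) as a perturbation that is small in trace norm. Everything is reduced to showing, for every \(F\in C_c(\mathbb{C})\),
\[
\frac{1}{d_n}\sum_{i=1}^{d_n}\bigl(F(\lambda_i(Y_n))-F(\lambda_i(X_n))\bigr)\to 0 .
\]
Then \(\{X_n\}_n\sim_\lambda\kappa\) transfers to \(\{Y_n\}_n\). By condition 1 all eigenvalues of \(X_n\) and \(Y_n\) lie in the fixed disk of radius \(2C\). So \(F\) may be replaced by a function that is uniformly continuous on that disk. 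By Stone–Weierstrass it can then be approximated uniformly by polynomials in \(z\) and \(\bar z\). The cleanest route, however, is to approximate \(F\) by smooth functions and use a result stated for test functions of a restricted class.

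First step: Hermitian test functions. For \(F\) a polynomial \(p(z)\) in \(z\) alone, \(\sum_i p(\lambda_i(A))=\operatorname{tr} p(A)\). I would then bound
\[
|\operatorname{tr}(Y_n^k)-\operatorname{tr}(X_n^k)|\le k\,(2C)^{k-1}\|Z_n\|_1 .
\]
This follows by telescoping \(Y_n^k-X_n^k=\sum_j Y_n^{j}Z_nX_n^{k-1-j}\) and using \(|\operatorname{tr}(AB)|\le\|A\|\,\|B\|_1\). Dividing by \(d_n\) and using \(\|Z_n\|_1=o(d_n)\) gives the claim for every holomorphic polynomial.

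Main obstacle: holomorphic polynomials do not determine a general \(F\in C_c(\mathbb{C})\). I expect this to be the delicate part, and it is why the cited Golinskii–Serra argument is needed. The idea is as follows. The eigenvalues of \(X_n\) are real. The imaginary parts of the eigenvalues of \(Y_n\) are controlled by Weyl/Ky Fan-type majorization, \(\sum_i|\operatorname{Im}\lambda_i(Y_n)|\le\|\operatorname{Im} Y_n\|_1\le\|Z_n\|_1=o(d_n)\). Hence all but \(o(d_n)\) eigenvalues of \(Y_n\) lie within \(\varepsilon\) of the real segment \([-2C,2C]\). On that region a continuous \(F\) is uniformly close to \(G(\operatorname{Re} z)\), with \(G=F|_{\mathbb{R}}\). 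Weierstrass on the real interval then approximates \(G\) by a polynomial \(p\), and \(p(\operatorname{Re} z)\) is close to \(p(z)\) when \(\operatorname{Im} z\) is small. The \(o(d_n)\) exceptional eigenvalues contribute \(o(1)\) after division by \(d_n\), since \(F\) is bounded. Combined with the trace estimate, this yields the limit.

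Strong cluster: when \(\|Z_n\|_1=O(1)\), I would argue via the same majorization. The number of eigenvalues of \(Y_n\) with \(|\operatorname{Im}\lambda|>\varepsilon\) is at most \(\|Z_n\|_1/\varepsilon=O(1)\). For the real parts, the eigenvalues of \(\operatorname{Re} Y_n=X_n+\operatorname{Re} Z_n\) differ from those of \(X_n\) by a perturbation of \(O(1)\) trace norm. By Weyl interlacing, only \(O(1)\) of them can be at distance \(\ge\varepsilon\) from the eigenvalues of \(X_n\). Passing from \(\operatorname{Re}\lambda(Y_n)\) to \(\lambda(\operatorname{Re} Y_n)\) is the step I would handle by a Bendixson/majorization comparison; I would flag this as the most technical point. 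The proof also assumes that the eigenvalues of \(X_n\) are strongly clustered at the range of \(\kappa\); this is implicit in the strong-cluster statement.
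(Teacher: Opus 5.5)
The paper gives no proof of its own; the result is quoted from the GLT books. Your argument for $\{Y_n\}_n\sim_\lambda\kappa$ is the standard Golinskii--Serra-Capizzano one, and it is correct. It combines three ingredients: the telescoping trace bound; $\sum_i|\operatorname{Im}\lambda_i(Y_n)|\le\|\operatorname{Im}Z_n\|_1\le\|Z_n\|_1$, which follows from the Schur form and the fact that the diagonal of a Hermitian matrix has $\ell^1$-norm at most its trace norm; and Weierstrass on $[-2C,2C]$. Two small repairs are needed. Fix the order of choices: choose $p$ from $\delta$ first, then $\varepsilon$ from the Lipschitz constant of $p$ on $|z|\le 2C$. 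Also remove the $o(d_n)$ exceptional eigenvalues from $\operatorname{tr}p(Y_n)$ as well; this costs $o(1)$ because $p$ is bounded there. You are also right that the strong-cluster claim needs the extra hypothesis that $\{X_n\}_n$ is itself strongly clustered at the range of $\kappa$. As printed it is false: take $Z_n=0$, $\kappa\equiv 1$, and $X_n$ the identity with $\lfloor\sqrt{d_n}\rfloor$ diagonal entries replaced by $0$.

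The genuine gap is the step from $\lambda(\operatorname{Re}Y_n)$ to $\operatorname{Re}\lambda(Y_n)$. Bendixson only places the real parts in $[\lambda_{\min}(\operatorname{Re}Y_n),\lambda_{\max}(\operatorname{Re}Y_n)]$. Ky Fan majorization $\operatorname{Re}\lambda(Y_n)\prec\lambda(\operatorname{Re}Y_n)$ gives no count of real parts lying in a gap of $S$. Neither sees the size of $\operatorname{Im}Y_n$. For example, $Y=\begin{pmatrix}1&1\\-1&-1\end{pmatrix}$ has $\operatorname{Re}Y=\operatorname{diag}(1,-1)$ but is nilpotent. With $S=\{-1,1\}$, both Bendixson and majorization hold, since $(0,0)\prec(1,-1)$, yet every real part sits in the gap. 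Direct sums of such blocks show this information is compatible with all $d_n$ real parts lying in the gap.

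You need a quantitative matching that uses $\operatorname{Im}Y_n$. It has to be a Frobenius-norm statement, and this is exactly where $\|Z_n\|\le C$ enters, via $\|Z_n\|_F^2\le\|Z_n\|\,\|Z_n\|_1$. Take the Schur form $Y_n=U_n(D_n+N_n)U_n^*$ with $N_n$ strictly upper triangular. Comparing off-diagonal parts of $U_n^*(\operatorname{Im}Y_n)U_n$ gives $\|N_n\|_F^2\le 2\|\operatorname{Im}Y_n\|_F^2\le 2C\|Z_n\|_1$. Since $\operatorname{Re}Y_n=U_n\bigl(\operatorname{Re}D_n+\tfrac12(N_n+N_n^*)\bigr)U_n^*$, the Hoffman--Wielandt inequality for Hermitian matrices yields
\[
\sum_j\bigl|\lambda_j^{\downarrow}(\operatorname{Re}Y_n)-(\operatorname{Re}\lambda(Y_n))_j^{\downarrow}\bigr|^2\le\tfrac12\|N_n\|_F^2\le C\|Z_n\|_1=O(1).
\]
So only $O(\varepsilon^{-2})$ real parts are $\varepsilon$-far from their partners, and your other two steps then give the strong cluster. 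The same computation applied to $X_n$ and $Y_n$ directly gives $\min_\pi\sum_j|\lambda_j(X_n)-\lambda_{\pi(j)}(Y_n)|^2\le 2\|Z_n\|_F^2=O(1)$. That makes both the detour through $\operatorname{Re}Y_n$ and the separate imaginary-part count unnecessary.
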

\section{Discretized problem and GLT analysis}\label{sec:analysis}
In this section, we present the discretization of the variable-coefficient space-fractional diffusion equation \eqref{eq2}, posed on the spatial domain $\Omega = {[a_1, b_1] \times [a_2, b_2] \subset \mathbb{R}^2}$. The equation is first reformulated on a discrete space-time grid, leading to a sequence of linear systems governed by structured coefficient matrices that encode the nonlocal behavior of the model. We then analyze the spectral properties of these matrices using the GLT framework. In particular, we compute the GLT symbol of the resulting matrix sequence, which characterizes the asymptotic distribution of eigenvalues and singular values.
\subsection{Discretization}

We first treat the one-dimensional space setting and then the two-dimensional space setting.

\subsubsection{One-dimensional}
Let $M$ and $N$ be positive integers representing temporal steps and the number of interior spatial, respectively. The spatial step size is defined as $\Delta x = {(b_1 - a_1)}/{(N + 1)},$ and the temporal step size is $\Delta t = {T}/{M}$. The spatial grid points are given by $x_i = a_1 + i \Delta x$, for $0 \leq i \leq N+1$, while the temporal grid points are defined as $ t_{m} = m \Delta t$, for $0 \leq m \leq M$.\\
To approximate the left-sided Riemann--Liouville fractional derivative, we adopt the shifted Grünwald finite difference scheme \cite{MeerschaertTadjeran2006}, which is widely recognized for its improved stability and accuracy compared to the standard Grünwald approach. The continuous fractional operator \( {}_a D_x^\alpha u(x,t) \) is discretized at each interior point \( x_i \) using the following expression
  {\begin{equation}\label{equ4}
{}_a D_x^\alpha u(x, t)\big|_{x = x_i, \, t = t_{m}}= \frac{1}{(\Delta x)^{\alpha}} \sum_{k=0}^{i+1} g_{i - k + 1}^{(\alpha)} u(x_{k},t_{m})+ \mathcal{O}(\Delta x),
\end{equation}}
where
\begin{equation}\label{equ5}
  {g_0^{(\alpha)} = 1, \quad g_{k+1}^{(\alpha)} = \left(1 - \frac{\alpha + 1}{k+1} \right) g_{k}^{(\alpha)}, \quad k \ge 0.}
\end{equation}
This recurrence can be rewritten in closed form as
\begin{equation}
\label{eq:g-binomial}
  {g^{(\alpha)}_k = (-1)^{k} \binom{\alpha}{k}, \quad k \ge 0,}
\end{equation}
where
\[
\binom{\alpha}{k} = \frac{\alpha (\alpha-1) \cdots (\alpha-k+1)}{k!}
\]
denotes the generalized binomial coefficient.
\begin{proposition}\label{alpha coff}
Let $\alpha \in (1,2)$ and $g_k^{(\alpha)}$ be defined as in \eqref{equ5}. Then we have
  {\[
\begin{cases}
g_0^{(\alpha)} = 1, \quad g_1^{(\alpha)} = -\alpha, \quad
g_0^{(\alpha)} > g_2^{(\alpha)} > g_3^{(\alpha)} > \dots > 0, \\[4pt]
\displaystyle \sum_{k=0}^{\infty} g_k^{(\alpha)} = 0,\quad
\displaystyle \sum_{k=0}^{n} g_k^{(\alpha)} < 0, \quad n \ge 1.
\end{cases}
\]}
\end{proposition}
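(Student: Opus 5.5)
The plan is to work directly from the recurrence \eqref{equ5} and the closed form \eqref{eq:g-binomial}. The values $g_0^{(\alpha)}=1$ and $g_1^{(\alpha)}=(1-(\alpha+1))g_0^{(\alpha)}=-\alpha$ come straight from the recurrence with $k=0$.

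For the sign and monotonicity pattern, I will apply the recurrence once more: $g_2^{(\alpha)}=(1-\frac{\alpha+1}{2})(-\alpha)=\frac{\alpha(\alpha-1)}{2}$. This lies in $(0,1)$ for $\alpha\in(1,2)$, which gives $g_0^{(\alpha)}>g_2^{(\alpha)}>0$. For $k\ge 2$ the multiplier $1-\frac{\alpha+1}{k+1}=\frac{k-\alpha}{k+1}$ lies strictly in $(0,1)$, because $k>\alpha$ and $k-\alpha<k+1$. Induction on $k\ge2$ then shows that $g_{k+1}^{(\alpha)}=\frac{k-\alpha}{k+1}g_k^{(\alpha)}$ is positive and strictly smaller than $g_k^{(\alpha)}$. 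This yields the chain $g_2^{(\alpha)}>g_3^{(\alpha)}>\dots>0$.

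For $\sum_{k\ge0}g_k^{(\alpha)}=0$, I will use the binomial series $(1-z)^\alpha=\sum_{k\ge0}(-1)^k\binom{\alpha}{k}z^k=\sum_k g_k^{(\alpha)}z^k$ for $|z|\le1$ and evaluate it at $z=1$. This is the main technical point, since the series has to converge at the boundary point. Convergence follows because all terms from $k=2$ on are positive, so the tail is a monotone series. Moreover $g_k^{(\alpha)}=O(k^{-\alpha-1})$, which is summable since $\alpha>0$; this can be read off from $g_k^{(\alpha)}=\frac{\Gamma(k-\alpha)}{\Gamma(-\alpha)\Gamma(k+1)}$, or obtained from $\prod_j(1-\frac{\alpha+1}{j+1})$ by comparison with $\exp(-(\alpha+1)\sum 1/j)$. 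Abel's theorem then gives $\sum_k g_k^{(\alpha)}=\lim_{z\to1^-}(1-z)^\alpha=0$.

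Once the total sum is zero, the partial-sum inequality needs no further estimates. For $n\ge1$,
\[
\sum_{k=0}^{n}g_k^{(\alpha)}=-\sum_{k=n+1}^{\infty}g_k^{(\alpha)}<0,
\]
because every $g_k^{(\alpha)}$ with $k\ge n+1\ge2$ is strictly positive. As an alternative route, I could use the identity $\sum_{k=0}^n(-1)^k\binom{\alpha}{k}=(-1)^n\binom{\alpha-1}{n}$ and check its sign directly. The tail argument is simpler, though.
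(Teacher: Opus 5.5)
Your proof is correct and complete. The paper gives no proof of this proposition to compare against: it is quoted as a standard property of the shifted Gr\"unwald weights from the literature (Meerschaert--Tadjeran), so your proposal supplies a self-contained justification. The sign and monotonicity part is right:
\begin{itemize}
\item $0<g_2^{(\alpha)}=\alpha(\alpha-1)/2<1$;
\item for $k\ge2$ the multiplier $\frac{k-\alpha}{k+1}$ lies in $(0,1)$, so induction gives $g_2^{(\alpha)}>g_3^{(\alpha)}>\dots>0$.
\end{itemize}
Reducing the partial-sum inequality to the positive tail $-\sum_{k\ge n+1}g_k^{(\alpha)}$ is also exactly right.

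One small tightening. Positivity of the tail by itself only gives convergence in $[0,\infty]$. In your route it is the bound $g_k^{(\alpha)}=O(k^{-\alpha-1})$ that actually gives finiteness, and only then does Abel's theorem apply.

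You can avoid both the Gamma asymptotics and Abel's theorem by using monotone convergence. For $0\le z<1$,
\[
\sum_{k\ge2} g_k^{(\alpha)} z^k=(1-z)^\alpha-1+\alpha z .
\]
The terms on the left are nonnegative and nondecreasing in $z$. Letting $z\to1^-$ therefore gives $\sum_{k\ge2}g_k^{(\alpha)}=\alpha-1$ directly, which is finite and yields $\sum_{k\ge0}g_k^{(\alpha)}=0$ in one step. This is also precisely the identity $\sum_{k\ge1}g_{k+1}^{(\alpha)}=\alpha-1$ that the paper later uses in the proof of Proposition~\ref{symb f_alpha}.
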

When combined with the variable diffusion coefficient \( a(x) \), the spatial discretization for the fractional diffusion term at time level   {\( m \)} becomes
\begin{equation}\label{equ6}
  {a(x) \, {}_a D_x^\alpha u(x,t)\big|_{x = x_i, \, t = t_{m}}
= \frac{a(x_{i})}{(\Delta x)^{\alpha}} \sum_{k=0}^{i+1} g_{i - k + 1}^{(\alpha)} u(x_{k},t_{m}) + \mathcal{O}(\Delta x).
}\end{equation}
  {Finally,  by coupling  the \(\theta\)-method \cite{pan2016} to discretize the temporal derivative in  \eqref{eq2} with  the approximation  in \eqref{equ6}}, we obtain the following time-stepping scheme
  {\begin{equation}\label{equ7}
\frac{u_i^{(m+1)} - u_i^{(m)}}{\Delta t} = \theta  \left( \frac{a(x_i)}{(\Delta x)^\alpha} \sum_{k=0}^{i+1} g_k^{(\alpha)} u_{i-k+1}^{(m+1)} \right) + (1 - \theta) \left( \frac{a(x_i)}{(\Delta x)^\alpha} \sum_{k=0}^{i+1} g_k^{(\alpha)} u_{i-k+1}^{(m)} \right) + f_i^{(m,\theta)},
\end{equation}}
where $0\leq\theta\leq 1$ is a weighting parameter, \( u_i^{(m)} \) approximates \( u(x_i, t_{m}) \),
and   {$f_i^{(m,\theta)} =  \theta f(x_i,t_{m+1}) + (1-\theta)f(x_i,t_{m}) $}.
The value of \( \theta \) determines the nature of the method: \( \theta = 0 \) corresponds to the Forward Euler method, \( \theta = 1 \) yields the Backward Euler method, and \( \theta = \frac{1}{2} \) gives the Crank--Nicolson scheme. \par
  {By defining $u_N^{(m)}=\left [u_i^{(m)}\right ]^T_{i=1,\ldots,N}$, that is the numerical approximation of the solution \( u(x, t_m) \) at all points of the interior spatial grid   {\( x_i, {i=1, \ldots,N}\)}, and accordingly, the vector $f_N^{(m,\theta)}=\left[f_i^{(m,\theta)}\right]^T_{i=1,\ldots,N}$, and by multiplying both sides of \eqref{equ7} by \( \Delta t \) and simplifying, we obtain}
\begin{equation}\label{equ8}
u_N^{(m+1)}-u_N^{(m)}=   {\theta G_{\alpha,N} \, u_N^{(m+1)} + (1 - \theta)  G_{\alpha,N} \, u_N^{(m)} + \Delta t  f_N^{(m,\theta)}, \quad m = 1, \dots, M,}
\end{equation}
where
\begin{equation}\label{D*G}
    G_{\alpha,N}= \gamma {\mathcal D}_{N}(a) \overline{G}_{\alpha, N},
    \end{equation}
with $\gamma = {\Delta t}/{(\Delta x)^{\alpha}}$,
${\mathcal D}_{N}(a) = \mathrm{diag} \left( a ( x_i) \right)_{i=1,\ldots,N}$  denoting the \( N \)-th diagonal sampling matrix and
\begin{equation} \label{eq:Gbar}
 \overline{G}_{\alpha, N} =
\begin{bmatrix}
g_1^{(\alpha)} & g_0^{(\alpha)}             & 0             & \cdots & 0 \\
g_2^{(\alpha)} & g_1^{(\alpha)} & g_0^{(\alpha)}            & \ddots & \vdots \\
\vdots & g_2^{(\alpha)} & g_1^{(\alpha)} & \ddots & 0 \\
\vdots        & \ddots        & \ddots        & \ddots & g_0^{(\alpha)}  \\
g_{N}^{(\alpha)} &\cdots & \cdots & g_2^{(\alpha)} & g_1^{(\alpha)}
\end{bmatrix} \in \mathbb{R}^{N \times N},
\end{equation}
  {denoting the Hessenberg Toeplitz matrix, that is
\begin{equation}\label{eq8bis}
\left (I_N - \theta  G_{\alpha,N} \right ) u_N^{(m+1)} = \left (I_N + (1 - \theta)  G_{\alpha,N} \right ) u_N^{(m)}
+ \Delta t  f_N^{(m,\theta)}.
\end{equation}}
In terms of notations, notice that ${\mathcal D}_{N}(a)$ coincides with $D_N(a)$ as in (\ref{eq:dna}) with $d=r=1$ if the domain of definition of the function $a$ is $[0,1]$. In general $[a_1,b_1]\neq [0,1]$ and hence ${\mathcal D}_{N}(a)=D_N(\hat a)$ with $\hat a(\hat x)=a(a_1 + (b_1-a_1)\hat x)$ and $x=a_1 + (b_1-a_1)\hat x$, $\hat x\in [0,1]$: this specification is useful for applying the GLT axioms when deducing the Weyl distributions of the related matrix sequences, as indicated in Remark \ref{rem:extending2}.

Equation \eqref{eq8bis} leads to the following   {all-at-once formulation} linear system
  {\begin{align}
\begin{bmatrix}
I_{N} - \theta  G_{\alpha,N} & & & \\
-(I_{N} + (1 - \theta)  G_{\alpha,N}) & I_{N} - \theta  G_{\alpha,N} & & \\
& \ddots & \ddots & \\
& & -(I_{N} + (1 - \theta)  G_{\alpha,N}) & I_{N} - \theta  G_{\alpha,N}
\end{bmatrix}
\begin{bmatrix}
u_N^{(1)} \\
u_N^{(2)} \\
\vdots \\
u_N^{(M)}
\end{bmatrix} & \nonumber \\
& \hskip -6cm =
\begin{bmatrix}
\Delta t \, f_N^{(1,\theta)} + (I_{N}  + (1 - \theta)  G_{\alpha,N}) u_N^{(0)}  \\
\Delta t \, f_N^{(2,\theta)} \\
\vdots \\
\Delta t \, f_N^{(M,\theta)}
\end{bmatrix}. \label{equ9}
\end{align}}
This system can be compactly expressed using Kronecker products in the following form
  {\begin{equation}\label{equ10}
\bar{A} \bar{\mathbf{u}} = \bar{\mathbf{f}},
\end{equation}
}%
where
  {\begin{align}
\bar{A} &= (I_M \otimes (I_N - \theta  G_{\alpha,N}) - Z_{M} \otimes (I_N + (1 - \theta)  G_{\alpha,N})) \in \mathbb{R}^{NM \times NM}, \label{eq:Abar} \\
%
Z_{N} &=
\begin{bmatrix}
0      &        &        &        \\
1      & 0      &        &        \\
 & \ddots & \ddots &        \\
     &  & 1      & 0
\end{bmatrix}\in \mathbb{R}^{M \times M} \nonumber,
\end{align}
}
and $\bar{\mathbf{u}}, \bar{\mathbf{f}} \in \mathbb{R}^{NM \times 1}$.
%
%
The coefficient matrix in \eqref{eq:Abar} can therefore be expressed compactly using Kronecker products as
  {\[
\bar{A} = H_{\theta,M} \otimes   G_{\alpha,N} + H_{M} \otimes I_N.
\]}
where the matrices   {$H_{M}$} and $H_{\theta,M}$ are defined as follows
\[
  {H_{M}} =
\begin{bmatrix}
1           &             &             &        \\
-1          & 1           &             &        \\
            & \ddots      & \ddots      &        \\
            &             & -1          & 1
\end{bmatrix}
\in \mathbb{R}^{M \times M}, \quad
  {H_{\theta,M} = -
\begin{bmatrix}
\theta      &             &             &        \\
1 - \theta  & \theta      &             &        \\
            & \ddots      & \ddots      &        \\
            &             & 1 - \theta  & \theta
\end{bmatrix}
}\in \mathbb{R}^{M \times M}.
\]
To calculate the solution of \eqref{equ10}
in a parallel pattern form, we exchange the order of the Kronecker product in   {\( \bar{A}\)}. This leads to an equivalent system
  {
\begin{equation}\label{equ11}
\widetilde{A}  \widetilde{\mathbf{u}}=\widetilde{\mathbf{f}},
\end{equation}}
where
  {\[
\widetilde{A} = G_{\alpha,N} \otimes H_{\theta,M} + I_N \otimes H_{M},
\]}
and   {\( \widetilde{\mathbf{f}} \)} is obtained by reshaping   {\( \bar{\mathbf{f}} \in \mathbb{R}^{NM} \)} into an \( N \times M \) matrix and then stacking its rows into a single column $MN$- by-$1$ vector. Equivalently,   {\( \widetilde{\mathbf{f}} \)} is formed by stacking the columns of the transpose of the reshaped matrix.\\
By pre-multiplying both sides of \eqref{equ11} with \( (I_N \otimes   {H_{\theta,M})}^{-1} \), we obtain an equivalent formulation
  {\begin{equation}\label{equ12}
{A_{\alpha,N,M}} \mathbf{u} = \mathbf{f},
\end{equation}
}where
  {\[
{A_{\alpha,N,M}} =  G_{\alpha,N} \otimes I_M + I_N \otimes H_{\theta,M}^{-1} H_{M} =  G_{\alpha,N} \otimes I_M + I_N \otimes Q_{\theta, M},
\]
}
with the matrix \( Q_{\theta, M} = H_{\theta,M}^{-1} H_{M}\) defined as
\begin{equation}\label{Q mat}
  {Q_{\theta, M} = -
\begin{bmatrix}
\frac{1}{\theta} & & &  &  \\
\frac{-1}{\theta^{2}} & \frac{1}{\theta} & &  &  \\
\frac{-(\theta-1)}{\theta^3} & \frac{-1}{\theta^{2}} & \frac{1}{\theta} &  &  \\
\vdots & \ddots & \ddots & \ddots &  \\
\frac{-(\theta-1)^{M-2}}{\theta^{M}} & \cdots & \frac{-(\theta-1)}{\theta^3} & \frac{-1}{\theta^{2}} & \frac{1}{\theta}
\end{bmatrix}
}
\in \mathbb{R}^{M \times M}.
\end{equation}
The transformed right-hand side is given by
  {\(
\mathbf{f} = (I_N\otimes H_{\theta,M})^{-1} \, \widetilde{\mathbf{f}}=(I_N\otimes H_{\theta,M}^{-1}) \, \widetilde{\mathbf{f}}.
\)}
\subsubsection{  {Two-dimensional} }
Let $ N_1 $, $N_{2}$ and $ M $ be positive integers representing the number of interior spatial and temporal steps, respectively. The spatial step sizes are defined as $\Delta x_{j} = {(b_j - a_j)}/{(N_{j} + 1)}$, $j=1,2$.
The temporal step size is $\Delta t = {T}/{M}.$ The spatial grid points are given by  $x_{1,i_1} = a_1 + i_1 \Delta x_{1}$, for $0 \leq i_1 \leq N_{1}+1$, and $x_{2,i_2} = a_2 + i_2 \Delta x_{2}$, for $0 \leq i_2 \leq N_{2}+1$, while the temporal grid points are defined as $t_{m} = m \Delta t$, for $0 \leq m \leq M$.\\
As in the one-dimensional setting, we adopt the shifted Grünwald finite difference scheme \cite{MeerschaertTadjeran2006} to approximate the left-sided Riemann-Liouville fractional derivatives.
The continuous fractional operator \( {}_{a_1} D_{x_{1}}^{\alpha_1} u(x_1,x_2,t) \) is discretized at each interior point \( x_{1,i_1} \) using the expression below
\begin{equation}\label{equ44}
{}_{a_1} D_{x_{1}}^{\alpha_1}u(x_{1}, x_{2},t) \big|_ {x_1= x_{1,i_1}, x_2 = x_{2,i_2}, t = t_{m}} = \frac{1}{(\Delta x_{1})^{\alpha_1}} \sum_{k=0}^{i_1+1} g_{i_1 - k + 1}^{(\alpha_1)} u(x_{1,k},x_{2,i_2},t_{m})+ \mathcal{O}(\Delta x_1),
\end{equation}
with related properties of $g^{(\alpha_1)}_{k}$  already given in the one-dimensional setting in Proposition~\ref{alpha coff}.
Similarly, to discretize the \( {}_{a_2} D_{x_{2}}^{\alpha_2} u(x_1,x_2,t) \) at each interior point \( x_{2,i_2} \), we deduce the following relation
\begin{equation}\label{equ66}
{}_{a_2} D_{x_{2}}^{\alpha_2} u(x_{1}, x_{2},t) \big|_{x_1= x_{1,i_1}, x_2 = x_{2,i_2}, t = t_{m}}= \frac{1}{(\Delta x_{2})^{\alpha_2}} \sum_{l=0}^{i_2+1} g_{i_2 - l + 1}^{(\alpha_2)} u(x_{1,i_2},x_{2,l},t_{m})+ \mathcal{O}(\Delta x_2),
\end{equation}
with related properties of $g^{(\alpha_2)}_{l}$  already given in the one-dimensional setting in Proposition~\ref{alpha coff}.
\par
Applying the spatial discretization \eqref{equ44}–\eqref{equ66} and the temporal discretization, we use the \(\theta\)-method \cite{pan2016} on \eqref{eq2}, which yields the following time-stepping scheme
\begin{align}\label{equ777}
\frac{u_{i_1,i_2}^{(m+1)} - u_{i_1,i_2}^{(m)}}{\Delta t} & = \ \theta \ a(x_{1,i_1},x_{2,i_1}) \left( \frac{1}{(\Delta x_1)^{\alpha_1}} \sum_{k=0}^{i_1+1} g_k^{(\alpha_1)} u_{i_1-k+1,i_2}^{(m+1)} + \frac{1}{(\Delta x_2)^{\alpha_2}} \sum_{l=0}^{i_2+1} g_l^{(\alpha_2)} u_{i_1,i_2-l+1}^{(m+1)}\right) \nonumber\\
& \hskip -2cm + (1 - \theta) \ a(x_{1,i_1},x_{2,i_1}) \left( \frac{1}{(\Delta x_1)^{\alpha_1}} \sum_{k=0}^{i_1+1} g_k^{(\alpha_1)} u_{i_1-k+1,i_2}^{(m)} + \frac{1}{(\Delta x_2)^{\alpha_2}} \sum_{l=0}^{i_2+1} g_l^{(\alpha_2)} u_{i_1,i_2-l+1}^{(m)} \right) + f_{i_1,i_2}^{m,\theta},
\end{align}
where
 $u^{(m)}_{i_1,i_2}$ approximates $u(x_{1,i_1}, x_{2,i_2}, t_m)$ and
$f_{i_1,i_2}^{(m,\theta)} =  \theta f(x_{1,i_1},x_{2,i_2},t_{m+1}) + (1-\theta)f(x_{1,i_1},x_{2,i_2},t_{m})$.
By defining the vectors $u_{\textbf{n}}^{(m)}=
\left [u_{i_1,i_2}^{(m)}\right ]^T_{i_1=1,\ldots,N_1, \ i_2=1,\ldots,N_2}$, $f_{\textbf{n}}^{(m,\theta)}=\left[f_{i_1,i_2}^{(m,\theta)}\right]^T_{i_1=1,\ldots,N_1, \ i_2=1,\ldots,N_2}$ according to the lexicographic order and by
multiplying both sides of \eqref{equ777} by \( \Delta t \) and simplifying, we obtain
\begin{equation}\label{equ8}
u_{\textbf{n}}^{(m+1)}-u_{\textbf{n}}^{(m)}= \theta  G_{\boldsymbol{\alpha},\textbf{n}} \, u_{\textbf{n}}^{(m+1)} + (1 - \theta)   G_{\boldsymbol{\alpha},\textbf{n}} \, u_{\textbf{n}}^{(m)} + \Delta t  f_{\textbf{n}}^{(m,\theta)}, \quad m = 1, \dots, M,
\end{equation}
where
\begin{align}
     G_{\boldsymbol{\alpha},\textbf{n}} & = D_\textbf{n}(a) U_{\boldsymbol{\alpha},\textbf{n}}
    \in \mathbb{R}^{(N_{1} N_{2})\times(N_{1} N_{2})},\label{D*U} \\
     {U_{\boldsymbol{\alpha},\textbf{n}}} & {=   \gamma_1 \overline{G}_{\alpha_1,N_1}\otimes I_{N_2} + I_{N_1} \otimes \gamma_2 \overline{G}_{\alpha_2,N_2}
    \in \mathbb{R}^{(N_{1} N_{2})\times(N_{1} N_{2})}},\label{U mat 2D} \\
 {\mathcal D}_{\textbf{n}}(a) &= \underset{i_2=1,\dots,N_2}{\underset{i_1=1,\dots,N_1}{\operatorname{diag}}}\left( a\left(x_{1,i_1},x_{2,i_2}\right)\right)\in \mathbb{R}^{(N_{1} N_{2})\times(N_{1} N_{2})},
\end{align}
with $\overline{G}_{\alpha_1,N_1}$ and $\overline{G}_{\alpha_2,N_2}$ defined as in~\eqref{eq:Gbar}.
In terms of notations, notice that ${\mathcal D}_{\textbf{n}}(a)$ coincides with $D_{\textbf{n}}(a)$ as in (\ref{eq:dna}) with $d=2, r=1$ if the domain of definition of the function $a$ is $[0,1]^2$. In general $[a_1,b_1]\times [a_2,b_2]\neq [0,1]^2$ and hence ${\mathcal D}_{\textbf{n}}(a)=D_{\textbf{n}}(\hat a)$ with $\hat a(\hat x_1,\hat x_2)=a(a_1 + (b_1-a_1)\hat x_1,a_2 + (b_2-a_2)\hat x_2)$ and $(x_1,x_2)=(a_1 + (b_1-a_1)\hat x_1,a_2 + (b_2-a_2)\hat x_2)$, $(\hat x_1,\hat x_2)\in [0,1]^2$: as in the one-dimensional setting, the latter setting is a key point for deducing the Weyl distributions of the related matrix sequences, using the GLT axioms; see Remark \ref{rem:extending2}.
\par
By combining the equations at all levels  into the single comprehensive linear system, known as the all-at-once formulation, and repeating verbatim the manipulations considered in the one-dimensional setting, which are not formally influenced by the two-dimensional space variable,  we obtain the equivalent formulation
\begin{equation}\label{equ122}
{A_{\boldsymbol{\alpha},\mathbf{n},M}} \mathbf{u} = \mathbf{f},
\end{equation}
with matrix
\[ {A_{\boldsymbol{\alpha},\mathbf{n},M}} = G_{\boldsymbol{\alpha},\textbf{n}} \otimes I_M + I_{\textbf{n}}  \otimes Q_{\theta, M}, \]
where $Q_{\theta, M}$ is define as in \eqref{Q mat},
and transformed right-hand side
\[
\mathbf{f} = (I_{\textbf{n}} \otimes H_\theta)^{-1} \, \widetilde{\mathbf{f}} =(I_{\textbf{n}} \otimes H_\theta^{-1}) \, \widetilde{\mathbf{f}},
\]
where
 \( \widetilde{\mathbf{f}} \) is obtained by reshaping the original right-hand side
 \[ \bar{\mathbf{f}} =
\begin{bmatrix}
\Delta t \, \mathbf{f}_{\textbf{n}}^{(1,\theta)} + (I_{\textbf{n}}  + (1 - \theta) G_{\boldsymbol{\alpha},\textbf{n}}  u_{\textbf{n}}^{(0)} \\
\Delta t \, \mathbf{f}_{\textbf{n}}^{(2,\theta)} \\
\vdots \\
\Delta t \, \mathbf{f}_{\textbf{n}}^{(M,\theta)}
\end{bmatrix}
\in \mathbb{R}^{N_{1}N_{2}M \times 1},
\]
into an \( N_{1}N_{2} \times M \) matrix and then stacking its rows into a single column $MN_{1}N_{2}$- by-$1$ vector. Equivalently, \( \widetilde{\mathbf{f}} \) is formed by stacking the columns of the transpose of the reshaped matrix.\\
\subsection{GLT, singular value, and eigenvalue analysis}
In the current section, we first introduce a basic definition, and then proceed with the GLT analysis of the coefficient matrix sequences derived from the previous approximation methods. More precisely the section is divided into two parts: in the first we study the matrix sequences arising from the space discretizations, both in 1D and 2D. Then, in the second part, we provide the GLT analysis for the global matrix sequences, incorporating also the term coming from the time discretization.
\subsubsection{GLT analysis of the space matrix sequences}
\begin{definition}\label{def1}
The Wiener class is the set of functions $f(\theta) = \sum_{k \in \mathbb{Z}^d} f_k e^{i \langle k, \theta \rangle}$ such that $\sum_{k \in \mathbb{Z}^d} |f_k| < \infty$. The Wiener class forms a sub-algebra of the continuous $2\pi$-periodic functions defined on $[-\pi, \pi]^d$.
\end{definition}
In this view, we begin by reporting the symbol associated with the matrix sequence $\overline{G}_{\alpha, N}$ defined in equation~\eqref{D*G}.
\begin{proposition}\label{symb f_alpha}
Let $\alpha \in (1,2)$. The symbol associated with $\overline{G}_{\alpha, N}$ belongs to the Wiener class, and
\[
f_\alpha(\xi_1) = \sum_{k=-1}^{\infty} g^{(\alpha)}_{k+1} e^{ik\xi_1}
 {=  e^{-i\xi_1}
\left(1 + e^{i(\xi_1 + \pi)}\right)^{\alpha}, \quad \xi_1 \in [-\pi,\pi].
}
\]
\end{proposition}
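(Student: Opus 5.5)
The plan has three parts: read the Fourier coefficients off the matrix, show they are absolutely summable using Proposition~\ref{alpha coff}, and sum the series in closed form with the binomial series.

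First, the Fourier coefficients. By \eqref{eq:Gbar}, the $(i,j)$ entry of $\overline{G}_{\alpha,N}$ is $g^{(\alpha)}_{i-j+1}$ when $i-j\ge -1$ and zero otherwise. With the convention $T_N(f)=[\hat f_{i-j}]$, this means $\overline{G}_{\alpha,N}=T_N(f_\alpha)$, where $\hat f_k=g^{(\alpha)}_{k+1}$ for $k\ge -1$ and $\hat f_k=0$ for $k<-1$. Summing these coefficients against $e^{ik\xi_1}$ over $k\ge -1$ gives exactly the first expression for $f_\alpha$ in the statement.

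Second, membership in the Wiener class (Definition~\ref{def1}). By Proposition~\ref{alpha coff}, $g_0^{(\alpha)}=1$, $g_1^{(\alpha)}=-\alpha$, and $g_k^{(\alpha)}>0$ for $k\ge 2$, with $\sum_k g_k^{(\alpha)}=0$. Hence
\[
\sum_{k\ge 0}|g^{(\alpha)}_k| = 1+\alpha+\sum_{k\ge 2}g^{(\alpha)}_k = 1+\alpha+(\alpha-1)=2\alpha<\infty .
\]
This uses only results already in the paper. Alternatively, $g_k^{(\alpha)}=O(k^{-\alpha-1})$ by standard Gamma-function asymptotics.

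Third, the closed form. By \eqref{eq:g-binomial},
\[
\sum_{k\ge -1} g^{(\alpha)}_{k+1}e^{ik\xi_1} = e^{-i\xi_1}\sum_{j\ge 0}\binom{\alpha}{j}(-e^{i\xi_1})^j .
\]
I would recognize the last sum as the binomial series $(1+z)^\alpha$ evaluated at $z=-e^{i\xi_1}=e^{i(\xi_1+\pi)}$, with the principal branch. Since $|z|=1$ lies on the boundary of the disk of convergence, this step needs justification, and I expect it to be the main obstacle. I would use Abel's theorem. For $|z|<1$ the identity $\sum_j\binom{\alpha}{j}z^j=(1+z)^\alpha$ is standard. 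The coefficients are absolutely summable by the second step, so the series converges uniformly on the closed disk by the Weierstrass M-test, and its sum is continuous there. The function $(1+z)^\alpha$ with the principal branch is also continuous on the closed disk; at $z=-1$ it tends to $0$ because $\alpha>0$. Letting $r\to 1^-$ in $z=re^{i(\xi_1+\pi)}$ then extends the identity to every $\xi_1\in[-\pi,\pi]$, including the point $z=-1$ (that is, $\xi_1=0$), where the symbol vanishes.

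Multiplying the binomial series by the prefactor $e^{-i\xi_1}$ gives the stated formula for $f_\alpha$. Since the coefficients are summable, $f_\alpha$ is continuous and in $L^1$, so $\overline{G}_{\alpha,N}=T_N(f_\alpha)$ is a legitimate Toeplitz matrix generated by it. This identifies $f_\alpha$ as the symbol, as required for the subsequent use of \textbf{GLT 2}.
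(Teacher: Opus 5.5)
Your proposal is correct and follows the paper's proof essentially step by step. Like the paper, you read off $\overline{G}_{\alpha,N}=T_N(f_\alpha)$ with $\hat f_k=g^{(\alpha)}_{k+1}$, obtain $\sum_{k\ge 0}|g^{(\alpha)}_k|=2\alpha$ from the sign pattern and $\sum_k g^{(\alpha)}_k=0$ in Proposition~\ref{alpha coff}, and sum the binomial series at $z=e^{i(\xi_1+\pi)}$. The only difference is that you justify the boundary case $|z|=1$ (uniform convergence on the closed disk plus continuity of the principal branch of $(1+z)^\alpha$, including at $z=-1$), which the paper takes for granted by quoting the binomial series as valid for $|z|\le 1$, $\alpha>0$.
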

\begin{proof}
Let us observe that $\overline{G}_{\alpha, N} = [\, g^{(\alpha)}_{i-j+1} \,]_{i,j=1}^N$ with $g^{(\alpha)}_k = 0$ for $k<0$,  and let us define the function  $\sum_{k=-1}^{\infty} g^{(\alpha)}_{k+1} e^{ik\xi_1}$.
When $\alpha \in (1,2)$, it is easy to see that $f_{\alpha}(\xi_1)$ lies in the Wiener class.
In detail, from Proposition~\ref{alpha coff} we know that
\[
 {g^{(\alpha)}_0 = 1 > 0,\quad g^{(\alpha)}_1 = -\alpha < 0,\quad g^{(\alpha)}_k > 0 \ \ \forall k \ge 2,\quad g^{(\alpha)}_k = 0 \ \ \forall k < 0.}
\]
Then
\begin{align}
\sum_{k=-1}^{\infty} \big| g_{k+1}^{(\alpha)} \big|
&= \big| g_{0}^{(\alpha)} \big| + \big| g_{1}^{(\alpha)} \big| + \sum_{k=1}^{\infty} \big| g_{k+1}^{(\alpha)} \big| \nonumber \\
&=g_{0}^{(\alpha)} +\left( - g_{1}^{(\alpha)}\right) + \sum_{k=1}^{\infty} g_{k+1}^{(\alpha)}
\quad \text{(since $g_{0}^{(\alpha)}>0$, $g_{1}^{(\alpha)}<0$, $g_{k+1}^{(\alpha)}>0$ for $k\ge 1$)} \nonumber \\
&= g_{0}^{(\alpha)}  - g_{1}^{(\alpha)} + \sum_{k=1}^{\infty} g_{k+1}^{(\alpha)}. \label{eq:1}
\end{align}
From Proposition~\eqref{alpha coff}, we have $\sum_{k=0}^{\infty} g_{k}^{(\alpha)} = 0.$
Therefore,
\begin{equation}\label{eq:2}
 {\sum_{k=1}^{\infty} g_{k+1}^{(\alpha)} = \sum_{k=2}^{\infty} g_{k}^{(\alpha)} = \sum_{k=0}^{\infty} g_{k}^{(\alpha)} - g_{0}^{(\alpha)} - g_{1}^{(\alpha)}
 =  - g_{0}^{(\alpha)} - g_{1}^{(\alpha)}
= -1 + \alpha
}
\end{equation}
%
 {Now, plugging \eqref{eq:2} in \eqref{eq:1}, we find
\begin{equation}
\sum_{k=-1}^{\infty} \big| g_{k+1}^{(\alpha)} \big|
=  g_{0}^{(\alpha)} -g_{1}^{(\alpha)}  + ( -1 + \alpha )
= 1 + \alpha + ( -1 + \alpha ) = 2\alpha.
\end{equation}}
Since this sum is finite, $f_{\alpha}(\xi_1)$ belongs to the Wiener class for $\alpha \in (1,2)$.
\par
To obtain an explicit formula for the symbol $f_{\alpha}(\xi_1)$, let us recall the definition of $g^{(\alpha)}_k$ given in~\eqref{eq:g-binomial} and rewrite $f_{\alpha}(\xi_1)$ in the following manner
\[
f_{\alpha}(\xi_1) = \sum_{k=-1}^{\infty} g^{(\alpha)}_{k+1} e^{ik\xi_1}
= \sum_{k=0}^{\infty} g^{(\alpha)}_k e^{i(k-1)\xi_1}
= \sum_{k=0}^{\infty}  {(-1)^{k}} \binom{\alpha}{k} e^{i(k-1)\xi_1}.
\]
We now write $(-1)^k$ as $e^{ik\pi}$ so that
\[
 {f_{\alpha}(\xi_1) = \sum_{k=0}^{\infty} \binom{\alpha}{k} e^{i(k-1)\xi_1} e^{ik\pi}
= e^{-i\xi_1} \sum_{k=0}^{\infty} \binom{\alpha}{k} e^{ik(\xi_1+\pi)}.
}
\]
Applying the well-known binomial series
\[
(1+z)^{\alpha} = \sum_{k=0}^{\infty} \binom{\alpha}{k} z^k, \quad z \in \mathbb{C}, \ |z| \le 1,\ \alpha>0,
\]
with $z = e^{i(\xi_1+\pi)}$, we finally deduce a very informative expression i.e.
\[
 {f_{\alpha}(\xi_1) = e^{-i\xi_1} \left( 1 + e^{i(\xi_1+\pi)} \right)^{\alpha}.
}
\]
\end{proof}
In the two-dimensional case,  {$\overline{G}_{\alpha_1,N_{1}}$} has the same symbol as obtained in Proposition~\ref{symb f_alpha}.
Similarly, the matrix  {$\overline{G}_{\alpha_2, N_{2}}$} has the same Toeplitz-like structure as $\overline{G}_{\alpha_1,N_{1}}$, with parameter $\alpha_2$ in place of $\alpha_1$.
Therefore, its associated symbol is given by
\[
 {f_{\alpha_2}(\xi_{2}) =  {e^{-i \xi_{2}}} \left( 1 + e^{i(\xi_{2} + \pi)} \right)^{\alpha_2},
\quad \xi_{2} \in [-\pi, \pi],
}
\]
which is the exact analogue of $f_{\alpha}(\xi_{1})$ obtained in Proposition~\ref{symb f_alpha}.
{
\begin{proposition}\label{symb q_theta}
Let $Q_{\theta, M} = H_{\theta, M}^{-1} H_M$.
Then,
\begin{equation}\label{Q symb}
    \{ Q_{\theta, M} \}_M \sim_{\mathrm{GLT}} q_{\theta} (\xi_{3})
\end{equation}
where
$q_{\theta} (\xi_{3})=\frac{1 - e^{i\xi_3}}{\theta + (1-\theta) e^{i\xi_3}}$.
\end{proposition}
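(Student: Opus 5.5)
The plan is to write both factors as Toeplitz matrices with trigonometric polynomial symbols and then combine them with the GLT axioms. The matrix $H_M$ is the lower bidiagonal Toeplitz matrix with $1$ on the main diagonal and $-1$ on the first subdiagonal. Its $(i,j)$ entry depends only on $i-j$: the coefficient with index $k=i-j=0$ is $1$ and the one with $k=1$ is $-1$. With the convention $T_M(f)=[\hat f_{i-j}]$, this gives $H_M=T_M(1-e^{i\xi_3})$. In the same way, apart from the overall minus sign in its definition, $H_{\theta,M}$ has $\theta$ on the diagonal and $1-\theta$ on the subdiagonal, so $H_{\theta,M}=\pm T_M(\theta+(1-\theta)e^{i\xi_3})$. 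I would fix the sign convention consistently with the stated symbol $q_\theta$, which corresponds to the positive choice. Both generating functions are trigonometric polynomials, hence in $L^1$. By \textbf{GLT 2} we then have $\{H_M\}_M\sim_{\mathrm{GLT}}1-e^{i\xi_3}$ and $\{H_{\theta,M}\}_M\sim_{\mathrm{GLT}}\theta+(1-\theta)e^{i\xi_3}$; the symbols do not depend on the space variable.

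Next comes the inversion step. Since $H_{\theta,M}$ is lower triangular with diagonal $\theta\neq0$ (we assume $\theta\in(0,1]$, as $Q_{\theta,M}$ is only defined then), it is invertible, so $H_{\theta,M}^{\dagger}=H_{\theta,M}^{-1}$. The symbol $\theta+(1-\theta)e^{i\xi_3}$ vanishes only when $|\theta|=|1-\theta|$ and $e^{i\xi_3}=-1$, that is at the single point $\theta=1/2$, $\xi_3=\pm\pi$. This is a set of measure zero, so the symbol is invertible a.e. for every $\theta\in(0,1]$. The inversion part of \textbf{GLT 3} therefore gives $\{H_{\theta,M}^{-1}\}_M\sim_{\mathrm{GLT}}(\theta+(1-\theta)e^{i\xi_3})^{-1}$. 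The product rule in \textbf{GLT 3} then yields $\{H_{\theta,M}^{-1}H_M\}_M\sim_{\mathrm{GLT}}q_\theta(\xi_3)$, which is the claim (\ref{Q symb}).

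The main obstacle is this inversion step. It relies only on a.e. invertibility, which holds, so the GLT axioms close the argument without further estimates; the one care point is the Crank--Nicolson case $\theta=1/2$, where the symbol has an isolated zero. As an independent check, and to give a sharper statement, I would also verify things directly. For $\theta>1/2$, $(\theta+(1-\theta)z)^{-1}=\theta^{-1}\sum_k(-(1-\theta)/\theta)^k z^k$ with geometric decay. This shows that $H_{\theta,M}^{-1}$ is exactly the lower triangular Toeplitz matrix with these Wiener-class coefficients, since inverses of lower triangular Toeplitz matrices are truncations of the power series. It also shows that $Q_{\theta,M}=T_M(q_\theta)$ exactly: products of lower triangular Toeplitz matrices are again lower triangular Toeplitz, with symbol the product of the power series. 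This matches the entries displayed in (\ref{Q mat}) up to the sign convention. For $\theta\le 1/2$ the coefficients no longer decay, and there the abstract GLT inversion axiom is the argument I would rely on.
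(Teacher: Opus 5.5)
Your proposal is correct and follows the paper's proof: you identify $H_M$ and $H_{\theta,M}$ via \textbf{GLT 2} as Toeplitz matrices with symbols $1-e^{i\xi_3}$ and $\theta+(1-\theta)e^{i\xi_3}$, then apply the inversion and product rules of \textbf{GLT 3}, and your explicit a.e.-invertibility check (the isolated zero at $\theta=1/2$, $\xi_3=\pm\pi$) spells out what the paper leaves implicit. Your sign remark is well taken: because of the leading minus in the paper's definitions of $H_{\theta,M}$ and $Q_{\theta,M}$, the literal symbol is $-q_\theta$, and the paper's proof drops this sign in the same way that your choice of the positive sign does.
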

\begin{proof}
In both the one and two-dimensional cases, both $H_M$ and $H_{\theta,M}$ are Toeplitz matrices generated by the symbols
\[
\rho_1(\xi_3) = 1 - e^{i\xi_3}, \qquad \rho_2(\xi_3) = \theta + (1-\theta) e^{i\xi_3},
\]
respectively, where $\xi_3 \in [-\pi, \pi]$. According to the GLT algebra and the standard properties of Toeplitz sequences \cite[Section 6.2]{GaroniSerra2017}, the matrix sequence $Q_{\theta, M} = H_{\theta, M}^{-1} H_M$ is associated with the GLT symbol
\[
q_{\theta} (\xi_{3}) = \frac{\rho_1(\xi_3)}{\rho_2(\xi_3)}.
\]
Therefore,
\[
    \{ Q_{\theta, M} \}_M \sim_{\mathrm{GLT}} q_{\theta} (\xi_{3}).
\]
\end{proof}}
%
\begin{theorem}\label{1D th:}
If \( a : [0, 1] \to \mathbb{R} \) is continuous a.e., then
\begin{itemize}
    \item[$a1)$]
    $\{  {{\mathcal D}_{N}(a)\overline{G}_{\alpha,N}} \}_N \sim_{\mathrm{GLT}} a(x)f_\alpha(\xi_{1})$,
    \item[$a2)$]
    $\{  {{\mathcal D}_{N}(a)\overline{G}_{\alpha,N}} \}_N \sim_{\sigma} a(x)f_\alpha(\xi_{1})$,
\end{itemize}
with $f_\alpha$ as in Proposition~\ref{symb f_alpha}.

\end{theorem}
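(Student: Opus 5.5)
The plan is to build the GLT sequence from its two factors and then apply the product rule \textbf{GLT 3}; item $a2)$ will follow from $a1)$ through \textbf{GLT 1}.

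\textbf{The Toeplitz factor.} First identify $\overline{G}_{\alpha,N}$ as a Toeplitz matrix with generating function $f_\alpha$. By Proposition~\ref{symb f_alpha}, the entries are $(\overline{G}_{\alpha,N})_{ij}=g^{(\alpha)}_{i-j+1}$. These are exactly the Fourier coefficients $\hat f_{i-j}$ of $f_\alpha(\xi_1)=\sum_{k\ge -1} g^{(\alpha)}_{k+1}e^{ik\xi_1}$, up to the sign convention in $e^{\pm ik\xi_1}$. If the convention is flipped, we get $T_N(\overline{f_\alpha})$ instead, and I would fix this by reindexing so that the symbol reads as stated. Since $f_\alpha$ lies in the Wiener class, it is continuous and hence in $L^1([-\pi,\pi])$. \textbf{GLT 2} then gives $\{\overline{G}_{\alpha,N}\}_N\sim_{\mathrm{GLT}} f_\alpha(\xi_1)$.

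\textbf{The diagonal factor.} Next treat ${\mathcal D}_N(a)=\mathrm{diag}(a(x_i))_{i=1}^N$. The nodes here are $x_i=i/(N+1)$ rather than $i/N$ (after rescaling to $[0,1]$ as explained before \eqref{equ9}). I would write ${\mathcal D}_N(a)=D_N(a)+E_N$, where $E_N$ is diagonal with entries $a(i/(N+1))-a(i/N)$.

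Here the hypothesis matters. I read ``continuous a.e.'' as Riemann integrable, i.e.\ bounded and continuous a.e., which is what \textbf{GLT 2} requires. Under that reading:
\begin{itemize}
\item $D_N(a)\sim_{\mathrm{GLT}} a(x)$ by \textbf{GLT 2};
\item $\{E_N\}_N\sim_\sigma 0$, since $|i/(N+1)-i/N|\le 1/N$.
\end{itemize}
The second point is the main obstacle, because a Riemann integrable $a$ need not be uniformly continuous. The plan is to show $\frac1N\sum_i |a(i/(N+1))-a(i/N)|\to0$, which gives $\|E_N\|_1/N\to0$ and hence a zero-distributed sequence by the Schatten-norm criterion of the first theorem in Section~\ref{subsec:matrix_structures}. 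For $a$ continuous, uniform continuity finishes this directly. For general Riemann-integrable $a$, I would approximate $a$ in $L^1$ by a continuous $\tilde a$ with $\|\tilde a\|_\infty\le\|a\|_\infty$. The Riemann sums of $|a-\tilde a|$ over both grids converge to $\|a-\tilde a\|_{L^1}$, and this is small. Alternatively, one can simply invoke the known GLT result that diagonal samplings on any grid with mesh-to-$i/N$ distance $o(1)$ share the symbol $a(x)$ \cite{GaroniSerra2017}. Applying \textbf{GLT 2} and \textbf{GLT 3} then gives $\{{\mathcal D}_N(a)\}_N\sim_{\mathrm{GLT}}a(x)$.

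\textbf{Conclusion.} The product rule in \textbf{GLT 3} yields $\{{\mathcal D}_N(a)\overline{G}_{\alpha,N}\}_N\sim_{\mathrm{GLT}}a(x)f_\alpha(\xi_1)$, which is $a1)$. Then \textbf{GLT 1} gives the singular value distribution $a2)$ on $D=[0,1]\times[-\pi,\pi]$. No eigenvalue claim is made, since the matrices are non-Hermitian.
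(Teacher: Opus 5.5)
Your proof is correct and follows the same route as the paper. You apply \textbf{GLT 2} to the Toeplitz factor $\overline{G}_{\alpha,N}=T_N(f_\alpha)$ and to the diagonal factor, use the product rule in \textbf{GLT 3} for $a1)$, and \textbf{GLT 1} for $a2)$. Your treatment of the grid is more careful than the paper's, which simply asserts ${\mathcal D}_N(a)=D_N(a)$ even though the nodes are $i/(N+1)$; your zero-distributed correction $E_N$, obtained from $\|E_N\|_1/N\to0$ via the Riemann-sum and $L^1$-approximation argument, properly closes that small gap, and the sign-convention hedge is unnecessary since with the paper's convention $T_N(f_\alpha)_{ij}=g^{(\alpha)}_{i-j+1}$ exactly.
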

\begin{proof}
To prove~$a1)$, we first recall that the argument relies on the fundamental properties of the GLT class, in particular the second part in~\textbf{GLT~2} and the third part stated in~\textbf{GLT~3}. More precisely,  we have
\[
{\mathcal D}_N(a)=D_{N}(a) =\underset{i=1,2,\ldots,N }{ \text{diag}}
a(x_{i})
\]
and hence
\[
\{{{\mathcal D}_N(a)=D_N(a)}\}_N \sim_{\mathrm{GLT}} a(x)
\]
by using \textbf{GLT~2}. Furthermore, by Proposition~\ref{symb f_alpha} via axiom \textbf{GLT~2}, we find
\[
\{ {\overline{G}_{\alpha,N}}\}_N \sim_{\mathrm{GLT}} f_\alpha(\xi_{1}).
\]
In conclusion by equation \eqref{D*G} and \textbf{GLT~3}, we obtain $a1)$ i.e.
\[
    \{  {{\mathcal D}_{N}(a) \overline{G}_{\alpha, N}\}_N}  \sim_{\mathrm{GLT}} a(x)f_\alpha(\xi_{1}),
\]
so that, by invoking \textbf{GLT~1}, we infer $a2)$.
\end{proof}
And what about
\begin{itemize}
    \item[$a3)$] $\{  {{\mathcal D}_{N}(a)\overline{G}_{\alpha,N}} \}_N \sim_{\lambda} a(x)f_\alpha(\xi_{1})$?
\end{itemize}
Since \( G_{\alpha,N} \) is not Hermitian, its eigenvalue distribution cannot be immediately deduced from \textbf{GLT~1}, even if other tools are available \cite{eig-nonH}.

To illustrate the relationship between the eigenvalue distributions and the associated symbols, we present two representative examples for which Figures~\ref{fig:1d_const}–\ref{fig:1d_var} indicate that the question in $a3)$ has a positive answer, which is clear in the constant coefficient case. Indeed, since the convergence is slow, we believe that the answer is positive even in the variable coefficient case.

In the one-dimensional case, we first consider the constant-coefficient matrix
\(\overline{G}_{\alpha,N}\) and compare its eigenvalues with sampled values of the symbol
\( f_{\alpha}(\xi_{1}) \) (Figure~\ref{fig:1d_const}). The two graphs almost superpose and hence we observe a positive answer to the question in $a3)$ when $a(x)$ is a constant function. The fact holds also formally because of the results in \cite{eig-nonH}, where the authors study the canonical distribution of Toeplitz matrix sequences for a rich class of parametric generating functions which includes $f_\alpha$ for every $\alpha\in (1,2)$. Hence
\[
\{ \overline{G}_{\alpha,N} \}_N \sim_{\lambda} f_\alpha(\xi_{1}).
\]
We then examine the variable-coefficient case \( D_{N}(a) \,\overline{G}_{\alpha,N} \) with \( a(x) = x^{2} +1\), comparing the eigenvalues with sampled values of the symbol \( a(x) f_{\alpha}(\xi_{1}) \) (Figure~\ref{fig:1d_var}). Also in this more complicate setting, the numerics suggest that the answer to the question in $a3)$ is positive as reported
Figure~\ref{fig:1d_var}.
We also tried the more degenerate case of ${a(x)=x^2}$ and also in this case the agreement is remarkable. In fact, since $a(x)$ is continuous with range $[0,1]$, the range of $a(x)f_\alpha(\xi_{1})$ for fixed $\xi_1$ is exactly the segment connecting the complex zero with the value $f_\alpha(\xi_{1})$: as a consequence the total range of the GLT symbol $a(x)f_\alpha(\xi_{1})$ is exactly the convex hull of the range of $f_\alpha(\xi_{1})$. On the other hand, since $1+x^2$ is continuous and shows a range equal to $[1,2]$ the eigenvalues fill a disconnected (highly nonconvex) region of the form $y\cdot$range$(f_\alpha)$, with $y\in [1,2]$ in agreement with the behavior depicted in Figure~\ref{fig:1d_var}.
Conversely, the theoretical proof is not available yet and it will be the subject of future investigations, starting from the key tools given in \cite{eig-nonH}.
\begin{figure}
\centering
\includegraphics[width=\textwidth]{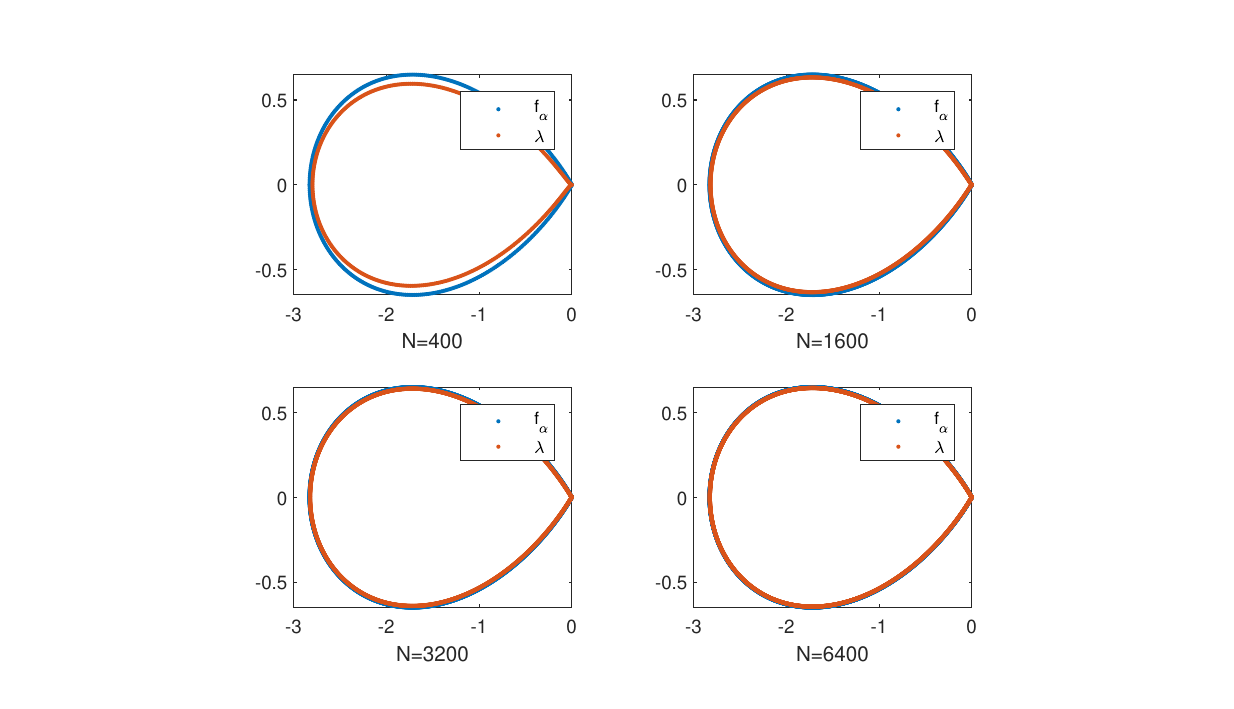}
\caption{Comparison between the symbol $f_{\alpha}(\xi_{1})$ (blue stars) and the eigenvalues of ${\overline{G}}_{\alpha,N}$ (red stars) for different values of $N$ - case $\alpha = 1.5$. 
}
\label{fig:1d_const}
\end{figure}
\begin{figure}
\centering
\includegraphics[width=\textwidth]{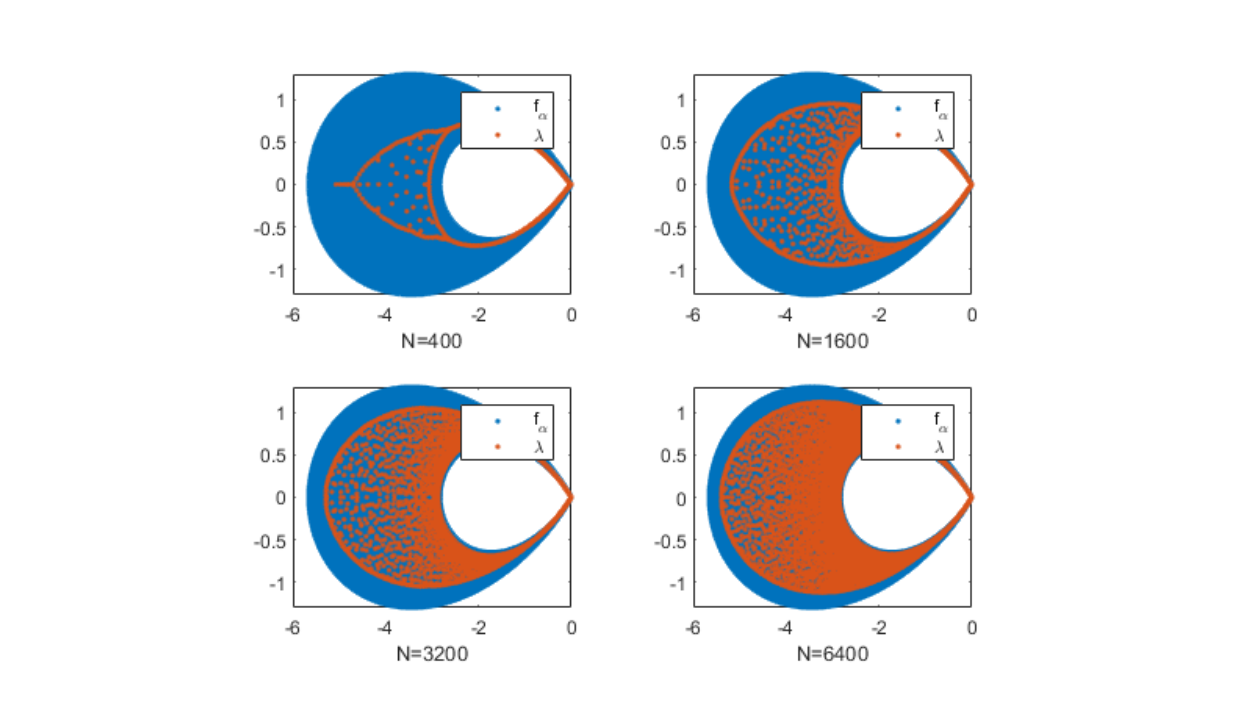}
\caption{Comparison between the symbol $a(x)f_{\alpha}(\xi_{1})$ (blue stars) and the eigenvalues of $D_{N}(a) \,\overline{G}_{\alpha,N}$ (red stars) for different values of $N$ - case $a(x) = x^2+1$, $\alpha = 1.5$. 
}
\label{fig:1d_var}
\end{figure}
\begin{remark}\label{rem:extending1}
 The inclusion of the factor $\gamma=\Delta t / (\Delta x)^{\alpha}$ in the analysis is possible and indeed not difficult, so extending
the scope of Theorem \ref{1D th:}. Assuming that $a : [0, 1] \to \mathbb{R}$ is continuous a.e. and that
\[
\lim_{\Delta t,\Delta x\rightarrow 0^+}\gamma(\Delta t,\Delta x)={\lim_{\Delta t,\Delta x\rightarrow 0^+}\frac{\Delta t}{(\Delta x)^\alpha}=}\ \gamma^*
\]
we have
\begin{eqnarray}\label{gen1-1Dth}
\{ G_{\alpha,N} \}_N & \sim_{\mathrm{GLT}} & \gamma^* a(x)f_\alpha(\xi_{1}), \\ \label{gen2-1Dth}
\{ G_{\alpha,N} \}_N & \sim_{\sigma} & \gamma^* a(x)f_\alpha(\xi_{1}), \\ \label{gen3-1Dth}
\{ {\overline{G}_{\alpha,N}}\}_N & \sim_{\lambda} & \gamma^* f_\alpha(\xi_{1}).
\end{eqnarray}
Alternatively, since we solve linear systems, one can get rid of the scaling factor very easily by writing in full generality
\begin{eqnarray}\label{gen1-bis-1Dth}
\{{\gamma}^{-1} G_{\alpha,N} \}_N & \sim_{\mathrm{GLT}} &  a(x)f_\alpha(\xi_{1}), \\ \label{gen2-bis-1Dth}
\{{\gamma}^{-1} G_{\alpha,N} \}_N & \sim_{\sigma} &  a(x)f_\alpha(\xi_{1}), \\ \label{gen3-bis-1Dth}
\{ {\gamma}^{-1}{\overline{G}_{\alpha,N}}\}_N & \sim_{\lambda} &  f_\alpha(\xi_{1}),
\end{eqnarray}
with the only hypothesis that $a : [0, 1] \to \mathbb{R}$ is continuous a.e. which is equivalent to the Riemann integrability of $a$.
We remark that the relations \eqref{gen1-bis-1Dth}, \eqref{gen2-bis-1Dth}, \eqref{gen3-bis-1Dth} are meaningful also when the {limit $\gamma^*$} is equal to zero, while relations \eqref{gen1-1Dth}, \eqref{gen2-1Dth}, \eqref{gen3-1Dth} become somehow trivial and not informative {for $\gamma^*=0$}.
\par
We finally notice that the statements $a1)$, $a2)$, (\ref{gen1-1Dth}), (\ref{gen2-1Dth}), (\ref{gen1-bis-1Dth}), (\ref{gen2-bis-1Dth}) are all valid if $a$ is assumed complex-valued. However, the interest in the complex-valued case is at the moment quite limited, since in the modeling of the problem the function $a$ is required nonnegative, often strictly positive.
\end{remark}
\par
Now we consider a similar analysis in the  {two-level} space setting,  where we have to consider the whole matrix $U_{\alpha,\mathbf{n}}$, by joining the tools in Theorem \ref{1D th:} and Remark \ref{rem:extending1}.
\begin{theorem}\label{2D th:}
{Let $\gamma_j=\Delta t / (\Delta x)^{\alpha_j}$, $j = 1,2$, with $\alpha_1, \alpha_2 \in (1,2)$.}
If $a : [0,1]^2 \to \mathbb{R}$ is continuous a.e. and
\[
\lim_{\Delta t,\Delta x\rightarrow 0^+}\gamma_j(\Delta t,\Delta x)= {\lim_{\Delta t,\Delta x\rightarrow 0^+} \frac{\Delta t}{(\Delta x)^\alpha_j} =}\  \gamma_j^*<\infty, \ \ \ j=1,2
\]
then
\begin{itemize}
\item [$b1)$] $\{G_{\boldsymbol{\alpha},\mathbf{n}}\}_\mathbf{n} \sim_{\mathrm{GLT}} a(x_1,x_2)\,\big(\gamma_1^* f_{\alpha_1}(\xi_1)
+ \gamma_2^* f_{\alpha_2}(\xi_2)\big)$,
\item [$b2)$] $\{G_{\boldsymbol{\alpha},\mathbf{n}}\}_\mathbf{n} \sim_{\sigma} a(x_1,x_2)\,\big(\gamma_1^* f_{\alpha_1}(\xi_1)
+ \gamma_2^* f_{\alpha_2}(\xi_2)\big)$,
\end{itemize}
{where
\[
f_{\alpha_1}(\xi_1) = { e^{-i\xi_1}}\big(1+e^{i(\xi_1+\pi)}\big)^{\alpha_1}, \quad
f_{\alpha_2}(\xi_2) =  {e^{-i\xi_2}}\big(1+e^{i(\xi_2+\pi)}\big)^{\alpha_2}, \quad
(\xi_1,\xi_2) \in [-\pi,\pi]^2,
\]
and where necessarily $\gamma_2^*=0$ if $\alpha_1>\alpha_2$, and alternatively $\gamma_1^*=0$ if $\alpha_2>\alpha_1$.}
In the special case where $\max\{\gamma_1^*,\gamma_2^*\}=0$, a normalization as in Remark \ref{rem:extending1} leads to
\begin{itemize}
\item [{$b1-1)$}] $\{{\gamma}^{-1}G_{\boldsymbol{\alpha},\mathbf{n}}\}_\mathbf{n} \sim_{\mathrm{GLT}} a(x_1,x_2)\,\big(\hat \gamma_1 f_{\alpha_1}(\xi_1)
+ \hat \gamma_2 f_{\alpha_2}(\xi_2)\big)$,
\item [{$b2-2)$}] $\{{\gamma}^{-1}G_{\boldsymbol{\alpha},\mathbf{n}}\}_\mathbf{n} \sim_{\sigma} a(x_1,x_2)\,\big(\hat \gamma_1 f_{\alpha_1}(\xi_1)
+ \hat \gamma_2 f_{\alpha_2}(\xi_2)\big)$,
\end{itemize}
with $\gamma=\max\{\gamma_1,\gamma_2\}$.
Here $\hat \gamma_1=1$ and $\hat \gamma_2=0$ if $\alpha_1>\alpha_2$, $\hat \gamma_2=1$ and $\hat \gamma_1=0$ if $\alpha_2>\alpha_1$, while the case $\alpha_1=\alpha_2$ leads to $\hat \gamma_1=1$ and $\hat \gamma_2\in (0,1)$ if $\gamma_1>\gamma_2$; $\hat \gamma_2=1$ and $\hat \gamma_1\in (0,1)$ if $\gamma_2>\gamma_1$.
\end{theorem}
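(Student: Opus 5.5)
The plan is to build the two-level GLT symbol from the one-level pieces with the axioms \textbf{GLT 2} and \textbf{GLT 3}, in the same way as in Theorem \ref{1D th:}, using $d=2$ GLT sequences indexed by $\mathbf{n}=(N_1,N_2)$.

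\textbf{Step 1 (Toeplitz factors).} Write $\overline{G}_{\alpha_1,N_1}\otimes I_{N_2}=T_{\mathbf{n}}(f_{\alpha_1}(\xi_1)\cdot 1)$ and $I_{N_1}\otimes \overline{G}_{\alpha_2,N_2}=T_{\mathbf{n}}(1\cdot f_{\alpha_2}(\xi_2))$. This follows from the Kronecker representation of $T_{\mathbf{n}}$ recalled in Subsection \ref{subsec:matrix_structures}, since $T_{N_1}(f)\otimes T_{N_2}(g)=T_{\mathbf{n}}(f\otimes g)$. Both functions are in the Wiener class by Proposition \ref{symb f_alpha}, so they are in $L^1([-\pi,\pi]^2)$. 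Then \textbf{GLT 2} gives two-level GLT symbols $f_{\alpha_1}(\xi_1)$ and $f_{\alpha_2}(\xi_2)$.

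\textbf{Step 2 (scalars).} Here $\gamma_j$ depends on $\mathbf{n}$, so \textbf{GLT 3} does not apply directly. I write
\[
\gamma_j T=\gamma_j^*T+(\gamma_j-\gamma_j^*)T .
\]
Since $\|T_{\mathbf{n}}(f_{\alpha_j})\|\le \|f_{\alpha_j}\|_\infty\le 2\alpha_j$ by the Wiener bound, the second term has spectral norm tending to zero. It is therefore zero-distributed, by the first characterization in the zero-distributed theorem. By \textbf{GLT 2}--\textbf{GLT 3}, $\{U_{\boldsymbol{\alpha},\mathbf{n}}\}_{\mathbf{n}}\sim_{\mathrm{GLT}}\gamma_1^*f_{\alpha_1}(\xi_1)+\gamma_2^*f_{\alpha_2}(\xi_2)$.

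\textbf{Step 3 (diagonal and conclusion).} As explained before Theorem \ref{1D th:}, $\mathcal D_{\mathbf{n}}(a)=D_{\mathbf{n}}(\hat a)$ up to the affine change of variables. The grid $x_{j,i}$ uses $i/(N_j+1)$ rather than $i/N_j$. Since $a$ is Riemann integrable, the difference $D_{\mathbf{n}}(\hat a(\cdot\,\tfrac{\mathbf{n}}{\mathbf{n}+1}))-D_{\mathbf{n}}(\hat a)$ is zero-distributed; alternatively one can cite the standard GLT fact for such grids. So $\{\mathcal D_{\mathbf{n}}(a)\}\sim_{\mathrm{GLT}}a(x_1,x_2)$. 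The product rule in \textbf{GLT 3} gives b1), and \textbf{GLT 1} gives b2).

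\textbf{Step 4 (the constraint on $\gamma_j^*$).} With $\Delta x_1\asymp\Delta x_2\asymp\Delta x$, we have
\[
\gamma_2/\gamma_1=(\Delta x)^{\alpha_1-\alpha_2}\to 0 \quad\text{if } \alpha_1>\alpha_2 .
\]
Finiteness of $\gamma_1^*$ then forces $\gamma_2^*=0$; the other case is symmetric.

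\textbf{Normalized statements.} For b1-1) and b2-2), divide by $\gamma=\max\{\gamma_1,\gamma_2\}$. Then $\gamma_j/\gamma\in[0,1]$ is bounded, and along the sequence it converges to $\hat\gamma_j$. This follows from the same power computation when $\alpha_1\ne\alpha_2$. When $\alpha_1=\alpha_2$, the ratio $\gamma_2/\gamma_1=(\Delta x_1/\Delta x_2)^{\alpha}$ is assumed, or arranged by passing to a subsequence, to converge. Repeating Step 2 with $\gamma_j/\gamma$ in place of $\gamma_j$ gives the claims.

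The main obstacle is Step 2 together with this normalization. The GLT algebra only allows constant scalars, so the $\mathbf{n}$-dependent coefficients must be absorbed into zero-distributed perturbations using the uniform norm bound. For the $\hat\gamma_j$ values one also needs the implicit assumption that the mesh ratio converges.
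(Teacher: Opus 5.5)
Your proposal is correct and follows essentially the same route as the paper: \textbf{GLT~2} for $\mathcal D_{\mathbf n}(a)$ and the Toeplitz factors, \textbf{GLT~3} for the Kronecker sum and the product, \textbf{GLT~1} for $b2)$, and the identity $\gamma_2/\gamma_1=(\Delta x)^{\alpha_1-\alpha_2}$ for the constraint on $\gamma_j^*$. Your extra details are points the paper's proof passes over, since it carries $\gamma_1 f_{\alpha_1}+\gamma_2 f_{\alpha_2}$ directly as a symbol: you absorb the $\mathbf n$-dependent factors $\gamma_j-\gamma_j^*$ and $\gamma_j/\gamma-\hat\gamma_j$ into zero-distributed perturbations via the uniform bound on $\|\overline{G}_{\alpha_j,N_j}\|$, and you handle the $i/(N_j+1)$ versus $i/N_j$ sampling.
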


\begin{proof}
It is sufficient to prove $b1)$, because $b2)$  follows from $b1)$ and \textbf{GLT~1}. From the definition in~\eqref{D*U}, the matrix sequence $\{G_{\boldsymbol{\alpha},\mathbf{n}}\}_\mathbf{n}$ can be expressed as
\[
\{G_{\boldsymbol{\alpha},\mathbf{n}}\}_\mathbf{n} = \{D_{\mathbf{n}}(a) \, U_{\boldsymbol{\alpha},\mathbf{n}}\},
\]
where $D_{\mathbf{n}}(a)$ is the diagonal sampling matrix and from~\eqref{U mat 2D} the matrix $U_{\boldsymbol{\alpha},\mathbf{n}}$ has the Kronecker sum form
\[
{U_{\boldsymbol{\alpha},\mathbf{n}} = \gamma_1 \overline{G}_{\alpha_1,N_1}\otimes I_{N_2} +
I_{N_1} \otimes \gamma_2 \overline{G}_{\alpha_2,N_2}
}
\]
%
where $\overline{G}_{\alpha_1,N_1}$ and $\overline{G}_{\alpha_2,N_2}$  are one-level Toeplitz matrices generated by the shifted Grünwald coefficients in the $x_1$- and $x_2$-directions, respectively.
{Thus, by the second part of property~\textbf{GLT~2}, we deduce that
\[
\{D_{\mathbf{n}}\}\sim_{\mathrm{GLT}} a(x_1,x_2).
\]}
Using the first part of property~\textbf{GLT~2}, we easily infer that the matrix sequences satisfy the relations
\[
\{\overline{G}_{\alpha_1,N_1}\}_{N_1} \sim_{\mathrm{GLT}}  f_{\alpha_1}(\xi_1), \qquad
\{\overline{G}_{\alpha_2,N_2}\}_{N_2} \sim_{\mathrm{GLT}}  f_{\alpha_2}(\xi_2).
\]
Invoking the $*$-algebra structure of the GLT class expressed by~\textbf{GLT~3}, we obtain that
\[
\{U_{\boldsymbol{\alpha},\mathbf{n}}\}_{n} \sim_{\mathrm{GLT}} \gamma_1 f_{\alpha_1}(\xi_1) + \gamma_2 f_{\alpha_2}(\xi_2).
\]
Finally, by applying axiom \textbf{GLT~3}, third item, to the sequences
$\{D_{\mathbf{n}}\}_{n}$ and $\{U_{\boldsymbol{\alpha},\mathbf{n}}\}_{n}$,
we conclude that
\[
\{G_{\boldsymbol{\alpha},\mathbf{n}}\}_{n} \sim_{\mathrm{GLT}}
a(x_1,x_2)\,\big( {f_{\alpha_1}(\xi_1) + f_{\alpha_2}(\xi_2)}\big).
\]
Then $b2)$ is an obvious consequence of $b1)$ and axiom \textbf{GLT~1}.
\par {Finally,  it is evident that $U_{\boldsymbol{\alpha},\mathbf{n}}$
can be written as}
 \begin{align*}
{   U_{\boldsymbol{\alpha},\mathbf{n}}} & = { \gamma_1 \overline{G}_{\alpha_1,N_1}\otimes I_{N_2} +
I_{N_1} \otimes \gamma_2 \overline{G}_{\alpha_2,N_2}   }\\
    & = {\gamma_1 \left ( \overline{G}_{\alpha_1,N_1}\otimes I_{N_2} +
    (\Delta x)^{\alpha_1-\alpha_2} I_{N_1} \otimes \overline{G}_{\alpha_2,N_2}
    \right)}
 \end{align*}
{so that the thesis follows if $\alpha_1 > \alpha_2$ and vice versa.}
\end{proof}
And what about
\begin{itemize}
\item [$b3)$] $\{G_{\boldsymbol{\alpha},\mathbf{n}}\}_{n} \sim_{\lambda} a(x_1,x_2)\,\big( {{\gamma_1^* f_{\alpha_1}(\xi_1) + \gamma_2^* f_{\alpha_2}(\xi_2)}}\big)$?
\end{itemize}
As in the one-dimensional case, the eigenvalue distribution is not an automatic GLT consequence for non-Hermitian matrices, but it is partially supported by the numerical evidence presented; see Figures~\ref{fig:2d_const}–\ref{fig:2d_var}.
In the two-dimensional case, we first consider the constant-coefficient matrix
 {\( {U}_{\boldsymbol{\alpha},\mathbf{n}} \)} in equation~\eqref{U mat 2D} and compare its eigenvalues with sampled values of the symbol  {\( f_{\alpha_1}(\xi_{1}) + f_{\alpha_2}(\xi_{2}) \)} (Figure~\ref{fig:2d_const}).
We then examine the variable-coefficient case  {\( {D}_{\mathbf{n}}(a) {U}_{\boldsymbol{\alpha},\mathbf{n}} \)} in equation~\eqref{D*U}, with \( a(x_{1},x_{2}) = x_{1}^{2} + x_{2}^{2}+1 \), comparing the eigenvalues with sampled values of the symbol
 {\( a(x_{1},x_{2}) \left( f_{\alpha_1}(\xi_{1}) + f_{\alpha_2}(\xi_{2}) \right) \)} (Figure~\ref{fig:2d_var}).
It is evident that the  {two-dimensional} setting is more intricate and we are not sure that the tools from \cite{eig-nonH} are useful as in the {one-dimensional} setting. The question remains open and it is left for future researches.
\begin{figure}
\centering
\includegraphics[width=\textwidth]{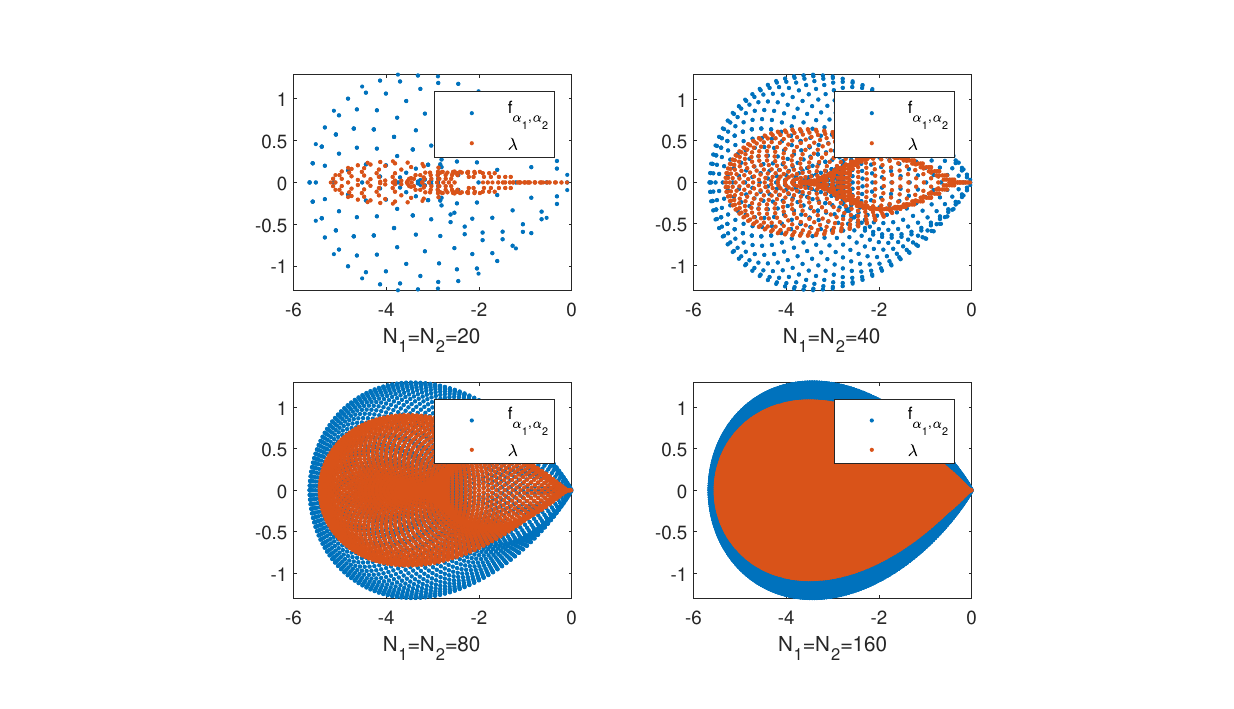}
\caption{Comparison between the symbol $ {f_{\alpha_1}(\xi_{1})+f_{\alpha_2}(\xi_{2})}$ (blue stars) and the eigenvalues of
{$\overline{G}_{\alpha_1,N_1}\otimes I_{N_2} + I_{N_1} \otimes \overline{G}_{\alpha_2,N_2}$} (red stars) for different values of $N_1=N_2$ - case $\alpha_1 = \alpha_2 = 1.5$, {$\gamma_1=\gamma_2=1$}.
}
\label{fig:2d_const}
\end{figure}
\begin{figure}
\centering
\includegraphics[width=\textwidth]{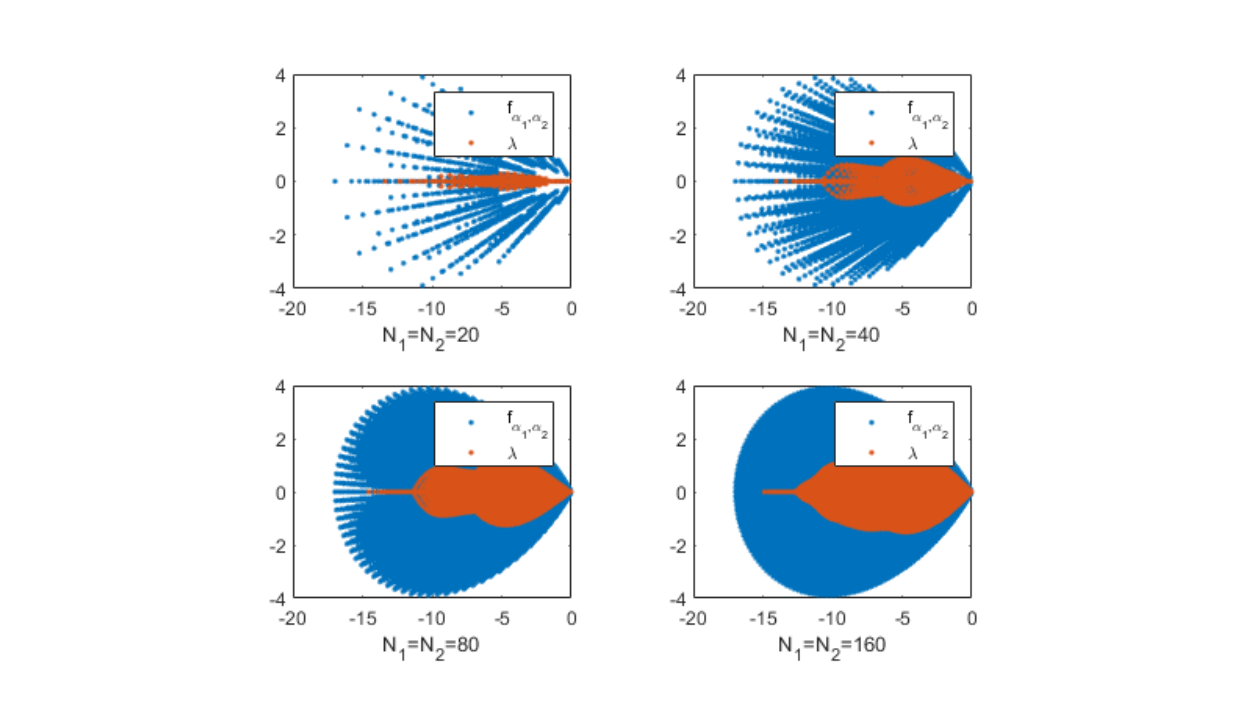}
\caption{Comparison between the symbol $a(x_1,x_2)(f_{\alpha_1}(\xi_{1})+f_{\alpha_2}(\xi_{2}))$ (blue stars) and the eigenvalues of $D_{\mathbf{n}}(a)({\overline{G}_{\alpha_1,N_1}\otimes I_{N_2} + I_{N_1} \otimes \overline{G}_{\alpha_2,N_2}})$ (red stars) for different values of $N_1=N_2$ - case $a(x_1,x_2) = x_1^2+x_2^2+1$, $\alpha_1 = \alpha_2 = 1.5$, {$\gamma_1=\gamma_2=1$.}
. 
}
\label{fig:2d_var}
\end{figure}
\begin{remark}\label{rem:extending2}
Looking back at the original formulation in (\ref{eq2-1d}), we observe that the original domain
{in the one-dimensional space case}
is $[a_1,b_1]$ and in general $[a_1,b_1]\neq [0,1]$.
For enlarging the scope of Theorem \ref{1D th:}, using the GLT theory, it is enough to consider ${\mathcal D}_{N}(a)=D_N(\hat a)$ with $\hat a(\hat x)=a(a_1 + (b_1-a_1)\hat x)$ and $x=a_1 + (b_1-a_1)\hat x$, $\hat x\in [0,1]$. In this way Theorem \ref{1D th:} and Remark \ref{rem:extending1} still hold with $\hat a$ instead of $a$ in the given statements.
In the two-dimensional setting, looking at (\ref{eq2}), again we have $[a_1,b_1]\times [a_2,b_2]\neq [0,1]^2$ in general.
However  ${\mathcal D}_{\textbf{n}}(a)=D_{\textbf{n}}(\hat a)$ with $\hat a(\hat x_1,\hat x_2)=a(a_1 + (b_1-a_1)\hat x_1,a_2 + (b_2-a_2)\hat x_2)$ and $(x_1,x_2)=(a_1 + (b_1-a_1)\hat x_1,a_2 + (b_2-a_2)\hat x_2)$, $(\hat x_1,\hat x_2)\in [0,1]^2$ and consequently Theorem \ref{2D th:} holds with the only change of  $\hat a$ instead of $a$.
\par
The flexibility of the GLT tools is of use in our context because also a general domain could be considered as far as it is Peano-Jordan measurable and in applications all the bounded domains have Riemann integrable characteristic function; see \cite[pp. 395–-399]{serra2003} for the first example and the idea of reduced GLT sequences in \cite[Section 3.1.4]{serra2006}, with the exhaustive theoretical development in \cite{reduced}.
\par
{We finally notice that the statements $b1)$, $b2)$ are valid if $a$ is assumed complex-valued. However, the interest in the complex-valued case is at the moment quite limited, since in the modeling of the problem the function $a$ is required nonnegative, often strictly positive.}
\end{remark}
\subsubsection{GLT analysis of the space-time matrix sequences}
We are ready to provide the GLT analysis for the space-time matrix sequences, both in 1D in space and 2D in space.
\begin{theorem}\label{1D orig mat}
Let $a:[0,1]\to\mathbb{R}$ be continuous a.e. and let $\gamma=\Delta t / (\Delta x)^{\alpha}$, $\gamma^*$ as in Remark \ref{rem:extending1}. Then
\begin{enumerate}
  \item[$c1)$] $\{ {A_{\alpha, N, M}}\}_{N,M} \sim_{\mathrm{GLT}}\gamma^* a(x)f_{\alpha}(\xi_1)+q_{\theta}(\xi_3)$,
  \item[$c2)$] $\{ {A_{\alpha, N,M}}\}_{N,M}\sim_{\sigma}       \gamma^* a(x)f_{\alpha}(\xi_1)+q_{\theta}(\xi_3)$,
\end{enumerate}
with ${A_{\alpha,N,M}}=G_{\alpha, N}\otimes I_M+I_N\otimes Q_{\theta,M}$, taking into account \eqref{equ11}-\eqref{equ12}.
\end{theorem}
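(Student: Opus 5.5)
The plan is to view $\{A_{\alpha,N,M}\}_{N,M}$ as a two-level GLT sequence in the variables $(x,\xi_1)$ for space and $(\cdot,\xi_3)$ for time. I would then build its symbol from the Kronecker factors, using the tensor-product compatibility of the multilevel GLT class together with \textbf{GLT~2}--\textbf{GLT~3}. Statement $c2)$ will follow from $c1)$ by \textbf{GLT~1}, so all the work lies in $c1)$.

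\textbf{Step 1 (spatial factor).} By Remark \ref{rem:extending1}, specifically \eqref{gen1-1Dth}, we have $\{G_{\alpha,N}\}_N\sim_{\mathrm{GLT}}\gamma^* a(x)f_\alpha(\xi_1)$. Here I use $G_{\alpha,N}=\gamma\,\mathcal D_N(a)\overline G_{\alpha,N}$ and $\gamma\to\gamma^*$. The difference $(\gamma-\gamma^*)\mathcal D_N(a)\overline G_{\alpha,N}$ has spectral norm at most $|\gamma-\gamma^*|\,\|a\|_\infty\,2\alpha\to0$, where the bound $2\alpha$ comes from the Wiener-class estimate in Proposition \ref{symb f_alpha}. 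This difference is therefore zero-distributed, which is exactly the content of \eqref{gen1-1Dth}.

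\textbf{Step 2 (temporal factor).} By Proposition \ref{symb q_theta}, $\{Q_{\theta,M}\}_M\sim_{\mathrm{GLT}}q_\theta(\xi_3)$. For $\theta\in(1/2,1]$, I note that $H_{\theta,M}$ is lower triangular Toeplitz with $|\theta|>|1-\theta|$. Its inverse is then the lower triangular Toeplitz matrix generated by $1/\rho_2$, which lies in the Wiener class, so $Q_{\theta,M}$ is Toeplitz up to a low-rank correction. I would keep the proposition as given and not reprove it.

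\textbf{Step 3 (tensor step).} This is the main point to justify. I need that if $\{X_N\}\sim_{\mathrm{GLT}}\kappa(x,\xi_1)$ on one level, then $\{X_N\otimes I_M\}\sim_{\mathrm{GLT}}\kappa(x,\xi_1)$ as a two-level sequence with symbol constant in $(y,\xi_3)$, and similarly $\{I_N\otimes Y_M\}\sim_{\mathrm{GLT}}\lambda(y,\xi_3)$. I would derive this from the known multilevel tensor results in \cite{GaroniSerra2018}, arguing through the generators. First, $T_N(f)\otimes I_M=T_{(N,M)}(f\otimes 1)$ and $D_N(a)\otimes I_M=D_{(N,M)}(a\otimes 1)$. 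Second, if $Z_N=R_N+E_N$ is a rank plus small-norm splitting, then $Z_N\otimes I_M=R_N\otimes I_M+E_N\otimes I_M$, with rank ratio unchanged and norm unchanged, so it stays zero-distributed. Third, products, sums and a.c.s.\ limits are preserved under $\cdot\otimes I_M$, since the approximation estimates scale identically. Applying this to $G_{\alpha,N}=\gamma\mathcal D_N(a)\overline G_{\alpha,N}$ directly, or to $Q_{\theta,M}$ through the inverse rule of \textbf{GLT~3} (valid because $\rho_2\neq0$ a.e.), gives the two Kronecker factors.

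\textbf{Step 4 (conclusion).} The sum rule of \textbf{GLT~3} then yields $\{A_{\alpha,N,M}\}\sim_{\mathrm{GLT}}\gamma^*a(x)f_\alpha(\xi_1)+q_\theta(\xi_3)$, with the convention of Remark \ref{rem:extending2} for a general interval $[a_1,b_1]$. Applying \textbf{GLT~1} gives $c2)$.

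The expected obstacle is Step 3, because the axioms as listed are one-sequence statements. I would also handle the regime $\gamma\to\infty$ by normalization, as in Remark \ref{rem:extending1}, and restrict the main statement to $\gamma^*<\infty$.
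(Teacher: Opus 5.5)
Your proposal is correct and follows the same route as the paper. It takes the GLT symbols of $\{G_{\alpha,N}\}$ (Theorem~\ref{1D th:} with Remark~\ref{rem:extending1}) and of $\{Q_{\theta,M}\}$ (Proposition~\ref{symb q_theta}), combines them through the Kronecker-sum structure via \textbf{GLT~3}, and deduces $c2)$ from \textbf{GLT~1}. Your Step 3 and your explicit restriction to $\gamma^*<\infty$ make rigorous the two-level embedding that the paper's proof takes for granted. Step 3 rests on the identities $T_N(f)\otimes I_M=T_{(N,M)}(f\otimes 1)$ and $D_N(a)\otimes I_M=D_{(N,M)}(a\otimes 1)$, on the stability of zero-distributed sequences under $\otimes I_M$, and on the inverse rule applied to $I_N\otimes H_{\theta,M}$.
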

\begin{proof}
It is enough to prove $c1)$, since $c2)$ can be directly inferred from $c1$ and axiom \textbf{GLT~1}.
From Theorem~\ref{1D th:} in the one-dimensional setting, we already know that the sequence of matrices
$\{G_{\alpha,N}\}_{N}$ is distributed in the GLT sense as $\gamma^* a(x)\,f_{\alpha}(\xi_1)$.
Moreover, by Proposition~\ref{symb q_theta}, the sequence $\{Q_{\theta,M}\}_{M}$ admits the GLT symbol
\[
q_{\theta}(\xi_3) \;=\; \frac{1-\mathrm{e}^{\mathrm{i}\xi_3}}{\theta+(1-\theta)\,\mathrm{e}^{\mathrm{i}\xi_3}},
\]
which represents the temporal discretization contribution arising from the $\theta$-method. Therefore, by combining these two GLT symbols through the Kronecker sum structure of $ {A_{\alpha,N,M}}$
and invoking the $*$-algebra property of the GLT class (\textbf{GLT~3}), we deduce that
\[
\{ {A_{\alpha,N,M}}\}_{N,M}=\{G_{\alpha,N}\otimes I_M+I_N\otimes Q_{\theta,M}\}_{N,M} \sim_{\mathrm{GLT}} \gamma^*a(x)f_{\alpha}(\xi_1)+q_{\theta}(\xi_3),
\]
which establishes the validity of $c1$. The statement $c2)$ then follows directly from $c1)$ and axiom \textbf{GLT~1}.
\end{proof}
\begin{theorem}\label{2D orig mat}
Let $a:[0,1]^2\to\mathbb{R}$ be continuous a.e. and let {$\gamma_j=\Delta t / (\Delta x)^{\alpha_j}$ and $\gamma^*_j$, $j=1,2$},
 as in Theorem \ref{2D th:}.
{Furthermore, let
\[
 {A_{\boldsymbol{\alpha},\mathbf{n},M}=G_{\boldsymbol{\alpha},\textbf{n}}\otimes I_M + I_{\mathbf{n}}\otimes Q_{\theta,M},}
\]
as defined in~\eqref{equ122}, with $f_{\alpha_1}$ and $f_{\alpha_2}$ taken from Proposition~\ref{symb f_alpha} and its $\alpha_2$–counterpart, and $q_{\theta}$ given by~\eqref{Q symb}.}
  Then
\begin{enumerate}
  \item[$d1)$] $ {\{A_{\boldsymbol{\alpha},\mathbf{n},M}\}_{\mathbf{n},M}}\sim_{\mathrm{GLT}}a(x_1,x_2)(\gamma_1^*f_{\alpha_1}(\xi_1)+\gamma_2^*f_{\alpha_2}(\xi_2))+q_{\theta}(\xi_3);$
  \item[$d2)$] $ {\{A_{\boldsymbol{\alpha},\mathbf{n},M}\}_{\mathbf{n},M}}\sim_{\sigma}a(x_1,x_2)(\gamma_1^* f_{\alpha_1}(\xi_1)+ \gamma_2^* f_{\alpha_2}(\xi_2))+q_{\theta}(\xi_3),$
 \end{enumerate}
{where necessarily $\gamma_2^*=0$ if $\alpha_1>\alpha_2$ and alternatively $\gamma_1^*=0$ if $\alpha_2>\alpha_1$.}
\end{theorem}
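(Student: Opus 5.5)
The plan mirrors the proof of Theorem~\ref{1D orig mat}, now with one more level. Only $d1)$ needs a proof: $d2)$ then follows from $d1)$ and axiom \textbf{GLT~1}, applied with $t=2\cdot 3$ and $D=[0,1]^3\times[-\pi,\pi]^3$.

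The first step is to lift the ingredients to the three-level setting. The multi-index is $(N_1,N_2,M)$ and the variables are $(x_1,x_2,x_3;\xi_1,\xi_2,\xi_3)$. Theorem~\ref{2D th:}, item $b1)$, gives
\[
\{G_{\boldsymbol{\alpha},\mathbf{n}}\}_{\mathbf{n}}\sim_{\mathrm{GLT}} a(x_1,x_2)\big(\gamma_1^*f_{\alpha_1}(\xi_1)+\gamma_2^*f_{\alpha_2}(\xi_2)\big).
\]
We need the same symbol for the tensorized sequence $\{G_{\boldsymbol{\alpha},\mathbf{n}}\otimes I_M\}$, viewed as a three-level sequence. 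We do not appeal to an abstract tensor rule for this; we redo the factorization directly. We have $G_{\boldsymbol{\alpha},\mathbf{n}}\otimes I_M=(D_{\mathbf{n}}(a)\otimes I_M)(U_{\boldsymbol{\alpha},\mathbf{n}}\otimes I_M)$, and the first factor is exactly the three-level sampling matrix $D_{(\mathbf{n},M)}(\tilde a)$ with $\tilde a(x_1,x_2,x_3)=a(x_1,x_2)$. The function $\tilde a$ is continuous a.e., hence Riemann integrable, so \textbf{GLT~2} gives it symbol $a(x_1,x_2)$. The second factor equals
\[
\gamma_1\,\overline G_{\alpha_1,N_1}\otimes I_{N_2}\otimes I_M+\gamma_2\, I_{N_1}\otimes\overline G_{\alpha_2,N_2}\otimes I_M .
\]
Each term is a three-level Toeplitz matrix $T_{(N_1,N_2,M)}(f_{\alpha_1}(\xi_1))$ or $T_{(N_1,N_2,M)}(f_{\alpha_2}(\xi_2))$, because $T_{\mathbf n}(f_1\otimes f_2\otimes f_3)=T_{n_1}(f_1)\otimes T_{n_2}(f_2)\otimes T_{n_3}(f_3)$ and $I_m=T_m(1)$. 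The scalar factors $\gamma_j$ converge to $\gamma_j^*$, and $\{(\gamma_j-\gamma_j^*)T(f_{\alpha_j})\}$ is zero-distributed since $\|T(f_{\alpha_j})\|\le\|f_{\alpha_j}\|_\infty\le 2\alpha_j$ (Wiener class, Proposition~\ref{symb f_alpha}). So by \textbf{GLT~2}, \textbf{GLT~3} this product has symbol $a(x_1,x_2)(\gamma_1^*f_{\alpha_1}+\gamma_2^*f_{\alpha_2})$. The constraint $\gamma_2^*=0$ for $\alpha_1>\alpha_2$, and the symmetric one, is inherited verbatim from the last part of the proof of Theorem~\ref{2D th:}.

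The second step handles the time term. We write $I_{\mathbf n}\otimes Q_{\theta,M}=(I_{\mathbf n}\otimes H_{\theta,M})^{-1}(I_{\mathbf n}\otimes H_M)$. Both factors $I_{\mathbf n}\otimes H_M$ and $I_{\mathbf n}\otimes H_{\theta,M}$ are three-level Toeplitz matrices, with symbols $\rho_1(\xi_3)$ and $\rho_2(\xi_3)$ respectively, as in the proof of Proposition~\ref{symb q_theta}. The symbol $\rho_2$ vanishes only when $\theta=1/2$ and $\xi_3=\pi$, a null set, so it is invertible a.e. and the pseudoinverse rule of \textbf{GLT~3} applies (for $\theta=0$ the scheme is explicit and is excluded, as in \eqref{Q mat}). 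This yields $\{I_{\mathbf n}\otimes Q_{\theta,M}\}\sim_{\mathrm{GLT}} q_\theta(\xi_3)$. Adding the two pieces by linearity in \textbf{GLT~3} gives $d1)$.

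The main obstacle is the rigorous passage from the lower-level symbols, proved in Theorem~\ref{2D th:} and Proposition~\ref{symb q_theta}, to the three-level sequence indexed by $(\mathbf n,M)\to\infty$. We avoid a general tensor-product lemma by decomposing every Kronecker factor explicitly into diagonal-sampling and multilevel Toeplitz building blocks, as described above. Two further points need checking. First, $\mathcal D_{\mathbf n}$ uses nodes $i/(N+1)$ rather than $i/N$; the difference is zero-distributed for Riemann-integrable $a$, and Remark~\ref{rem:extending2} handles the domain rescaling. Second, the pseudoinverse rule needs $\rho_2\neq 0$ a.e., which holds as noted for every $\theta\in(0,1]$.
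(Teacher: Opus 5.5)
Your proof is correct and has the same skeleton as the paper's. You take the GLT symbol of $G_{\boldsymbol{\alpha},\mathbf n}\otimes I_M$, add that of $I_{\mathbf n}\otimes Q_{\theta,M}$ by \textbf{GLT~3}, and obtain $d2)$ from \textbf{GLT~1}.

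Where you differ is the lift to three levels. The paper quotes $b1)$ of Theorem~\ref{2D th:} and Proposition~\ref{symb q_theta}, then ``combines the symbols according to the Kronecker-sum form''. This tacitly uses a tensor rule: a $2$-level GLT sequence tensored with $I_M$, or $I_{\mathbf n}$ tensored with a $1$-level one, is a $3$-level GLT sequence with the same symbol. That rule is not among the listed axioms \textbf{GLT~1}--\textbf{GLT~5}. You avoid it by rewriting each Kronecker factor as a three-level sampling matrix or a three-level Toeplitz matrix, so only \textbf{GLT~2}--\textbf{GLT~3} are needed.

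You also handle explicitly two points the paper glosses over:
\begin{itemize}
\item the passage $\gamma_j\to\gamma_j^*$, via a norm-vanishing perturbation (the proof of Theorem~\ref{2D th:} puts the non-limit $\gamma_j$ inside the symbol);
\item the $i/(N+1)$ versus $i/N$ nodes of $\mathcal D_{\mathbf n}(a)$.
\end{itemize}
The paper's version is shorter and reuses earlier results; yours is self-contained relative to the stated axioms.

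One caveat, which you inherit from Proposition~\ref{symb q_theta} rather than introduce. As defined, $H_{\theta,M}$ carries an overall minus sign. So $I_{\mathbf n}\otimes H_{\theta,M}$ is generated by $-\rho_2(\xi_3)$, not $\rho_2(\xi_3)$. The GLT symbol of $\{Q_{\theta,M}\}_M$ is therefore $-\rho_1(\xi_3)/\rho_2(\xi_3)$, which matches the diagonal entries $-1/\theta$ in \eqref{Q mat}. Your argument goes through unchanged with this sign, since a.e.\ invertibility is unaffected. However, because you re-derive the time symbol explicitly, your claim that this factor has symbol $\rho_2$ should be corrected. Equivalently, $d1)$ holds only if $q_\theta$ is read as the GLT symbol of $\{Q_{\theta,M}\}_M$, which is minus the displayed formula.
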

\begin{proof}
To establish the result, it is enough to verify $d1)$, since $d2)$ is an immediate consequence of $d1)$ and axiom \textbf{GLT~1}.
From Theorem~\ref{2D th:} we already know that
\[
 {\{G_{\boldsymbol{\alpha},\textbf{n}}\}_{\mathbf{n}}}\sim_{\mathrm{GLT}}a(x_1,x_2)(\gamma_1^* f_{\alpha_1}(\xi_1)+ \gamma_2^* f_{\alpha_2}(\xi_2)),
\]
{where necessarily $\gamma_2^*=0$ if $\alpha_1>\alpha_2$ and alternatively $\gamma_1^*=0$ if $\alpha_2>\alpha_1$.}
On the other hand, equation~\eqref{Q symb} ensures that $
\{Q_{\theta,M}\}_{M}\sim_{\mathrm{GLT}}q_{\theta}(\xi_3).
$
By combining these two symbols according to the Kronecker-sum form of $A_{\boldsymbol{\alpha},\mathbf{n},M}$ and applying the $\ast$-algebra rules of the GLT class (\textbf{GLT~3}), we arrive at
\[
 {\{A_{\boldsymbol{\alpha},\mathbf{n},M}\}_{\mathbf{n},M}
=\{G_{\boldsymbol{\alpha},\textbf{n}}\otimes I_M+I_{\mathbf{n}}\otimes Q_{\theta,M}\}_{\textbf{n},M}\sim_{\mathrm{GLT}}a(x_1,x_2)(\gamma_1^* f_{\alpha_1}(\xi_1)+ \gamma_2^* f_{\alpha_2}(\xi_2))+q_{\theta}(\xi_3),}
\]
which confirms $d1)$. The property $d2)$ then follows from $d1)$ and axiom \textbf{GLT~1}.
\end{proof}
And what about
\begin{itemize}
     \item[$c3)$] $\{ {A_{\alpha,N,M}}\}_{N,M}\sim_{\lambda} \gamma^* a(x)f_{\alpha}(\xi_1)+q_{\theta}(\xi_3)$?
    \item[$d3)$] $\{ {A_{\boldsymbol{\alpha},\mathbf{n},M}\}_{\mathbf{n},M}}\sim_{\lambda}a(x_1,x_2)(\gamma_1^* f_{\alpha_1}(\xi_1)+ \gamma_2^* f_{\alpha_2}(\xi_2))+q_{\theta}(\xi_3)$?
\end{itemize}

{Since the matrices $A_{\alpha,N,M}, {A_{\boldsymbol{\alpha},\mathbf{n},M}}$ are not Hermitian, their eigenvalue distribution is not guaranteed by the use of the GLT theory. Moreover the present case seems different from the previous ones reported in Figures \ref{fig:1d_const}, \ref{fig:1d_var}, \ref{fig:2d_const}, \ref{fig:2d_var}: indeed the numerical results seem to indicate that the statements $c3)$, $d3)$ are not true in the current setting, even when $a$ is chosen as a constant function.}

\section{Preconditioning strategy: 1D space setting}\label{sec:prec}

Since the considered coefficient matrices are non-symmetric and exhibit a Toeplitz-like structure, Krylov subspace methods such as GMRES~\cite{Saad1986GMRES} are well-suited for solving the linear systems in \eqref{equ12}, benefiting from efficient fast matrix–vector products.
To further enhance the efficiency and robustness of these iterative solvers, the use of preconditioning techniques is an essential strategy \cite{Benzi2002Preconditioning}.
\par
Here, we propose a block circulant preconditioner for {$A_{N,M}$}, obtained by replacing the Toeplitz block $\overline{G}_{\alpha,N}$ with a circulant counterpart $C_{\alpha,N}$, while keeping $Q_{\theta,M}$ unchanged. {Since our constant term $\gamma = \Delta t / (\Delta x)^{\alpha}$ is sufficiently large}, we apply the circulant approximation only to $\overline{G}_{\alpha,N}$ and retain $Q_{\theta,M}$ as it is.
In order to preserve the circulant structure, instead of using the diagonal matrix $D_{N}(a)$, we replace it with a constant scalar multiplier given by the average of the diffusion coefficients
\[
d = \frac{1}{N} \sum_{i=1}^{N} d_i,
\]
where $d_i$ are the diagonal entries of $D_N(a)$. Of course the choice of the circulant counterpart is far from unique and several proposals are treated in three decades of relevant literature. From theoretical studies, we know that Strang-type approaches are better than Frobenius optimal approximations, especially for matching the small eigenvalues and especially when the generating functions are smooth. However, here we have lower Hessemberg structures and hence, together with the usual Strang approximation, we take the Strang-type matrix chosen taking into account the largest coefficients, i.e. those on the first column, except the last which is replaced by that on upper main diagonal.
 {More precisely, we consider these two possible circulant approximations, where the first preconditioner is defined as
\begin{equation}\label{precoS}
     {\widetilde{P}_{\alpha,N,M}}=\gamma  d \, \widetilde{C}_{\alpha,N} \otimes I_M + I_N \otimes Q_{\theta,M}
\end{equation}
where $\widetilde{C}_{\alpha,N}$ is the standard Strang circulant approximation, while the latter is defined as
\begin{equation}\label{precog0}
     {\widehat{P}_{\alpha,N,M}}= \gamma d \, \widehat{C}_{\alpha,N} \otimes I_M + I_N \otimes Q_{\theta,M}
\end{equation}
where $\widehat{C}_{\alpha,N}$ is the circulant matrix generated by the vector $\left[g_1^{(\alpha)}, \ldots, g_{N-1}^{(\alpha)}, g_0^{(\alpha)}\right]$,
that is by the first column of $\overline{G}_{\alpha,N}$ where the last entry $g_N^{(\alpha)}$ has been replaced by the coefficient $g_0^{(\alpha)}$, which is nonnegigible while $g_N^{(\alpha)}$ tends to zero as $N$ tends to infinity; see also the Riemann-Lebesgue lemma \cite{zyg-bible}.}
\par
It is well known that any circulant matrix $C_{N}$ can be diagonalized by the Fourier matrix, i.e.,
{\[
C_{N} = F_N \Lambda_{N} F_N^{*},
\]
}
where $F_N$ is the unitary matrix with entries  {$(F_N)_{s,t} = \tfrac{1}{\sqrt{N}} e^{- \mathrm{i} s t 2\pi/ N}, \; 0 \leq s,t \leq N-1$.}
Here $\mathrm{i} = \sqrt{-1}$ is the imaginary unit, and $\Lambda_{N}$ is a diagonal matrix containing the eigenvalues of $C_{N}$.
\begin{theorem}\label{thm:bd_P1}
 {Let {$P  \in\{\widetilde{P}_{\alpha,N,M},\widehat{P}_{\alpha,N,M}\}$} as defined  in~\eqref{precoS} and in~\eqref{precog0}, respectively}. Set $U = F_{N} \otimes I_{M}$.
Then  {$P$} is unitarily  similar to a block diagonal matrix, namely
\begin{equation}\label{permut}
U^{*} P U = \operatorname{blkdiag}_{j=1}^{N} \left(\gamma d\,\lambda_{j} I_{M} + Q_{\theta,M} \right),
\end{equation}
where $\lambda_{j}$, $j=1,\ldots,N$, denote the eigenvalues of the circulant block  {$\tilde{C}_{\alpha,N}$ or $\hat{C}_{\alpha,N}$, respectively}. Moreover, each diagonal block
\begin{equation}\label{spectra}
D_{j} = \gamma d\,\lambda_{j} I_{M} + Q_{\theta,M}, \ \ j=1,\ldots,N,
\end{equation}
is an invertible lower triangular Toeplitz matrix (ILTT).
\end{theorem}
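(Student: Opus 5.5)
The proof is a direct computation with the Fourier diagonalization of circulants, followed by a separate check of the triangular Toeplitz structure and of invertibility. Throughout I assume $\theta\in(0,1]$, which is needed anyway for $H_{\theta,M}$ to be invertible and hence for $Q_{\theta,M}$ to exist. I also assume $a\ge 0$, so that $d\ge 0$, as in the modelling setting.

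\emph{Block diagonalization.} Write $C\in\{\widetilde C_{\alpha,N},\widehat C_{\alpha,N}\}$ as $C=F_N\Lambda_N F_N^*$ with $\Lambda_N=\mathrm{diag}(\lambda_j)$. Since $U=F_N\otimes I_M$ is unitary, the mixed-product rule for Kronecker products gives
\[
U^*PU=\gamma d\,(F_N^*CF_N)\otimes I_M+(F_N^*F_N)\otimes Q_{\theta,M}=\gamma d\,\Lambda_N\otimes I_M+I_N\otimes Q_{\theta,M}.
\]
With the ordering used in $P$ (space index outer, time index inner), this matrix is exactly $\operatorname{blkdiag}_{j}(\gamma d\lambda_j I_M+Q_{\theta,M})$. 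This proves \eqref{permut}.

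\emph{Lower triangular Toeplitz structure of $D_j$.} Both $H_M$ and $H_{\theta,M}$ are lower triangular Toeplitz matrices. Lower triangular Toeplitz matrices of order $M$ form a commutative algebra, isomorphic to polynomials modulo $z^M$. In particular, the inverse of an invertible element is again lower triangular Toeplitz, and so is a product. Hence $Q_{\theta,M}=H_{\theta,M}^{-1}H_M$ is lower triangular Toeplitz, as is also visible in \eqref{Q mat}. Adding the scalar matrix $\gamma d\lambda_j I_M$ preserves this structure. The diagonal entry of $D_j$ is therefore the constant $\gamma d\lambda_j-1/\theta$, and $D_j$ is invertible if and only if $\gamma d\lambda_j\neq 1/\theta$.

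\emph{Invertibility.} This is the only step needing an actual estimate. I will show that $\mathrm{Re}\,\lambda_j<0$ for every $j$, using Proposition~\ref{alpha coff}. For $\widehat C_{\alpha,N}$ the first column is $[g_1,g_2,\dots,g_{N-1},g_0]$, so the eigenvalues are
\[
\lambda_j=g_1+\sum_{k=2}^{N-1}g_k\,\omega_j^{k-1}+g_0\,\omega_j^{N-1},
\]
with $\omega_j$ the $N$-th roots of unity. Since $g_k>0$ for $k\neq1$ and $\mathrm{Re}\,\omega_j^m\le 1$, we get
\[
\mathrm{Re}\,\lambda_j\le \sum_{k=0}^{N-1}g_k^{(\alpha)}<0.
\]
For the Strang circulant $\widetilde C_{\alpha,N}$, the first column keeps $g_1,\dots,g_m$ (about half of them), sets the middle entries to zero, and wraps $g_0$ into the last position. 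The same argument gives $\mathrm{Re}\,\lambda_j\le \sum_{k=0}^{m}g_k^{(\alpha)}<0$. Since $\gamma>0$ and $d\ge0$, we have $\mathrm{Re}(\gamma d\lambda_j)\le 0<1/\theta$, so $\gamma d\lambda_j\neq 1/\theta$; when $d=0$ the diagonal entry is $-1/\theta\neq0$. Hence each $D_j$ is an invertible lower triangular Toeplitz matrix.

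The main obstacle is this last step: one has to get the exact first column of the Strang circulant for the Hessenberg matrix right, and then bound the real part of its eigenvalues by a partial sum of the $g_k^{(\alpha)}$, which Proposition~\ref{alpha coff} shows is negative.
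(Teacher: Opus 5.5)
Your proof is correct. The block diagonalization (mixed-product rule with $U=F_N\otimes I_M$) and the lower triangular Toeplitz structure of $D_j$ follow the paper. The invertibility step is where you genuinely differ.

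The paper writes $D_j=H_{\theta,M}^{-1}(\gamma d\lambda_j H_{\theta,M}+H_M)$ and reduces invertibility to $1-\gamma d\lambda_j\theta\neq 0$, which is the same condition as your $\gamma d\lambda_j\neq 1/\theta$. It justifies this by asserting that every $\lambda_j$ has nonzero imaginary part. That assertion is false. A real circulant always has a real eigenvalue equal to the sum of its first column (frequency zero), e.g.\ $\sum_{k=0}^{N-1}g_k^{(\alpha)}$ for $\widehat{C}_{\alpha,N}$. For even $N$ there is also a real eigenvalue at frequency $\pi$.

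Your bound $\mathrm{Re}\,\lambda_j<0$ handles these real eigenvalues too, so your route repairs the paper's argument. The bound comes from the sign pattern and the negative partial sums in Proposition~\ref{alpha coff}. The price is the hypothesis $d\ge 0$, which you state explicitly, and it is genuinely needed. For real $d<0$, take $\lambda$ the (negative) frequency-zero eigenvalue; the choice $d=1/(\gamma\theta\lambda)$ makes the corresponding block singular. The paper's argument, had it been valid, would only have needed $a$ real.

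Your worry about the exact first column of the Strang circulant is unnecessary. The estimate only uses two facts: that column contains $g_1^{(\alpha)}$, and every other entry is some $g_k^{(\alpha)}$, $k\neq 1$, taken at most once (possibly with a weight in $[0,1]$). Hence its sum is at most a partial sum $\sum_{k=0}^{n}g_k^{(\alpha)}$ with $n\ge 1$, which is negative.
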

\begin{proof}
Substituting the diagonalization of  {the circulant matrix }into~\eqref{precoS}  {(or into~\eqref{precog0} equivalently)} gives
\[
P = \gamma d\,F_{N}\Lambda_{\alpha,N}F_{N}^{*}\otimes I_{M} + I_{N}\otimes Q_{\theta,M}.
\]
 {Then, using the mixed-product property $(A\otimes B)(C\otimes D) = AC \otimes BD$, it holds that}
\begin{align*}
U^{*} P U &= (F_{N}^{*} \otimes I_{M})
\left( \gamma d\,F_{N}\Lambda_{\alpha,N}F_{N}^{*} \otimes I_{M} + I_{N} \otimes Q_{\theta,M} \right)
(F_{N} \otimes I_{M}) \\
&=  \gamma d\,F_{N}^{*}F_{N}\Lambda_{\alpha,N}F_{N}^{*}F_{N} \otimes I_{M} + F_{N}^{*}F_{N} \otimes Q_{\theta,M}  \\
&=  \gamma d\,\Lambda_{\alpha,N} \otimes I_{M} + I_{N} \otimes Q_{\theta,M}.
\end{align*}
%
%
Since $\Lambda_{\alpha,N}=\operatorname{diag}(\lambda_{1},\ldots,\lambda_{N})$, it follows that
\[
\Lambda_{\alpha,N}\otimes I_{M} = \operatorname{blkdiag}(\lambda_{1}I_{M},\ldots,\lambda_{N}I_{M}),
\qquad
I_{N}\otimes Q_{\theta,M} = \operatorname{blkdiag}(Q_{\theta,M},\ldots,Q_{\theta,M}).
\]
Hence their sum is block diagonal with blocks $\gamma  d\,\lambda_{j}I_{M}+Q_{\theta,M}$, $j=1,\ldots,N$. Finally, note that $Q_{\theta,M}$ is a lower triangular Toeplitz matrix, and adding a scalar multiple of the identity preserves both the Toeplitz and lower triangular structure. Thus, taking into account that $H_{\theta,M}$ is invertible, each block
\[
D_{j} = \gamma d\,\lambda_{j} I_{M} + Q_{\theta,M} = H_{\theta,M}^{-1}\left(\gamma d\,\lambda_{j}  H_{\theta,M} + H_{M}\right)
\]
is lower triangular Toeplitz. Now we can conclude that $D_{j}$ is nonsingular because $\gamma d\,\lambda_{j}  H_{\theta,M} + H_{M}$ is lower bidiagonal and its invertibility is decided by the diagonal elements, which are of the form $1-\gamma d\,\lambda_{j}\theta$ which can not vanish since the eigenvalues $\lambda_j$ have all nonzero imaginary part.
With the latter the proof is concluded.
\end{proof}
\begin{remark}\label{rem:efficiency}
The block diagonalization in Theorem~\ref{thm:bd_P1} shows that applying  {$P^{-1}$} to a given vector $v$ can be performed through the following three steps.
\begin{enumerate}
    \item Step 1: compute $
    \dot{v} = (F_{N}^{*} \otimes I_{M}) v$.

    \item Step 2: compute
    $\ddot{v} = \operatorname{blkdiag}\big(D_{j}^{-1}\big)_{j=1}^{N}\,\dot{v},
    \qquad D_{j} = \gamma d\,\lambda_{j} I_{M} + Q_{\theta,M}$.

    \item Step 3: compute $
    \dddot{v} = (F_{N} \otimes I_{M}) \ddot{v}$.
\end{enumerate}
The resulting total cost is of the order $O(NM\log(N))$ which is optimal in our setting, since it is the same order of the cost of a matrix-vector product with the original coefficient matrix.

From Theorem~\ref{thm:bd_P1}, both Steps~1 and~3 involve Fourier transforms in the spatial dimension and can be performed in $\mathcal{O}(NM \log N)$ operations using FFT.
Step~2 requires solving $N$ independent systems with coefficient matrices $D_{j}$, which are ILTT matrices.
Since the inverse of an ILTT matrix is also ILTT, its first column can be computed in $\mathcal{O}(M \log M)$ operations by the divide-and-conquer inversion algorithm~\cite{Commenges1984}.
Therefore, Step~2 can be carried out in $\mathcal{O}(NM \log M)$ operations.
In total, applying  {$P^{-1} v$} requires $
\mathcal{O}(NM \log N + NM \log M)$,
which is nearly linear in the problem size.
Moreover, the $N$ systems in Step~2 are completely independent and can be solved in parallel, making the preconditioner highly efficient in practice.

Of course, if the matrix $Q_{\theta,M}$ is not stored explicitly as in (\ref{Q mat}), then it can be viewed as  $H_{\theta,M}^{-1}H_{M}$ so that
\[
D_{j} = H_{\theta,M}^{-1}\left(\gamma d\,\lambda_{j}  H_{\theta,M} + H_{M}\right),
\]
which leads to a more efficient computation since  $H_{\theta,M}$ and $\gamma d\,\lambda_{j}  H_{\theta,M} + H_{M}$, $j=1,\ldots,N$, are all lower bidiagonal.
With this choice, for solving $N$ linear systems with coefficient matrices $D_j$, $j=1,\ldots,N$, the related cost is reduced from $\mathcal{O}(NM \log M)$ to $\mathcal{O}(NM)$.
\end{remark}

We conclude the current section with a GLT/clustering analysis of the preconditioned matrix sequences.
\begin{theorem} \label{thm:inv(P)A clustering 1D}
{Let {$P_{N,M}  \in\{\widetilde{P}_{\alpha,N,M},\widehat{P}_{\alpha,N,M}\}$}, as defined  in~\eqref{precoS} and in~\eqref{precog0}, respectively.
Then,
\[
{\{P_{N,M}\}_{N,M}}\sim_{\mathrm{GLT}}\gamma^* \hat d f_{\alpha}(\xi_1)+q_{\theta}(\xi_3),
\]
with {$\hat d=\int_\Omega a(x)$} and $d=d_N$ converging to $\hat d$ as $N$ tends to infinity and
\[
{\{P_{N,M}^{-1} A_{\alpha,N,M}\}_{N,M}}\sim_{\mathrm{GLT}} \frac{\gamma^* a(x)f_{\alpha}(\xi_1)+q_{\theta}(\xi_3)}{\gamma^*\hat d f_{\alpha}(\xi_1)+q_{\theta}(\xi_3)}.
\]
}
\end{theorem}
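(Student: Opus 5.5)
The plan is to show first that each circulant approximation is a zero-distributed perturbation of the Toeplitz matrix $\overline{G}_{\alpha,N}$. Then the GLT axioms \textbf{GLT~2}--\textbf{GLT~3} give the symbol of $P_{N,M}$, and the inversion rule in \textbf{GLT~3} handles the preconditioned product.

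\textbf{Step 1: circulant versus Toeplitz.} For the Strang circulant $\widetilde{C}_{\alpha,N}$, the difference $\widetilde{C}_{\alpha,N}-\overline{G}_{\alpha,N}$ has nonzero entries only in the corners. Fix an integer $k$.
\begin{itemize}
\item Outside the first and last $k$ rows and columns, the entries of the difference are coefficients $g_j^{(\alpha)}$ with $j>k$.
\item By Proposition~\ref{symb f_alpha}, $\sum_j |g_j^{(\alpha)}|<\infty$ (Wiener class).
\end{itemize}
Hence the difference splits as $R_N+E_N$, with $\operatorname{rank}(R_N)\le 2k$ and $\|E_N\|\le \sum_{j>k}|g_j^{(\alpha)}|$. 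The latter is at most $2\sum_{j>k}|g_j^{(\alpha)}|$, using $\|E\|\le\sqrt{\|E\|_1\|E\|_\infty}$ with row and column sums. Letting $k\to\infty$ slowly with $N$ gives a zero-distributed sequence by the first characterization theorem of Section~\ref{sec:spectr-tools}.

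For $\widehat{C}_{\alpha,N}$ the argument is even simpler. It differs from $\overline{G}_{\alpha,N}$ by a strictly upper triangular part whose entries are $g_j^{(\alpha)}$, $j\ge 2$, plus one corner entry. So the same splitting applies, and $\{\widehat{C}_{\alpha,N}\}_N\sim_{\mathrm{GLT}} f_\alpha(\xi_1)$ as well.

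\textbf{Step 2: tensor structure and the scalar $d$.} Kronecker products with $I_M$, and with $I_N$ on the other side, preserve zero-distribution: rank scales by $M$ and the spectral norm is unchanged. Therefore
\begin{itemize}
\item $\{C_{\alpha,N}\otimes I_M\}\sim_{\mathrm{GLT}} f_\alpha(\xi_1)$;
\item $\{I_N\otimes Q_{\theta,M}\}\sim_{\mathrm{GLT}} q_\theta(\xi_3)$, by Proposition~\ref{symb q_theta}, exactly as in the proof of Theorem~\ref{1D orig mat}.
\end{itemize}
The scalar $d=d_N=\frac1N\sum_i a(x_i)$ is a Riemann sum. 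Since $a$ is Riemann integrable, it converges to $\hat d=\int a$ (after the rescaling in Remark~\ref{rem:extending2}). Likewise $\gamma\to\gamma^*$, so $\gamma d_N\to\gamma^*\hat d$.

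Write
\[
\gamma d_N C_{\alpha,N}\otimes I_M=\gamma^*\hat d\,C_{\alpha,N}\otimes I_M+(\gamma d_N-\gamma^*\hat d)\,C_{\alpha,N}\otimes I_M .
\]
The second term has spectral norm tending to zero, because $\|C_{\alpha,N}\|\le\sum_j|g_j^{(\alpha)}|=2\alpha$. It is therefore zero-distributed. Combining with \textbf{GLT~3} (linearity) yields
\[
\{P_{N,M}\}\sim_{\mathrm{GLT}}\gamma^*\hat d f_\alpha(\xi_1)+q_\theta(\xi_3).
\]

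\textbf{Step 3: the preconditioned sequence.} By Theorem~\ref{thm:bd_P1}, $P_{N,M}$ is invertible, so $P_{N,M}^\dagger=P_{N,M}^{-1}$. To apply the inversion rule of \textbf{GLT~3}, the symbol $\gamma^*\hat d f_\alpha(\xi_1)+q_\theta(\xi_3)$ must be nonzero a.e. on $[0,1]\times[-\pi,\pi]^2$. I expect this verification to be the main obstacle.

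The function is analytic, hence real-analytic, in $(\xi_1,\xi_3)$ away from $\xi_1=\pi$, where the factor $(1+e^{i(\xi_1+\pi)})^\alpha$ loses smoothness, and away from the poles of $q_\theta$. It is not identically zero: at $\xi_3=0$ we have $q_\theta=0$, while $f_\alpha(\xi_1)\neq0$ for $\xi_1\neq0$, provided $\gamma^*\hat d\neq0$. Hence its zero set has measure zero. A more concrete check uses the signs of the real parts:
\begin{itemize}
\item $\mathrm{Re}\,q_\theta\le 0$ for $\theta\ge 1/2$;
\item $\mathrm{Re}\,f_\alpha(\xi_1)=2^\alpha\cos(\alpha\,\cdot)$-type expressions, which are negative for $\alpha\in(1,2)$.
\end{itemize}
Together these show the sum vanishes only at $\xi_1=\xi_3=0$.

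The degenerate cases $\gamma^*\hat d=0$ or $\gamma^*=\infty$ need a separate remark. For $\gamma^*=0$ the symbol reduces to $q_\theta$, which vanishes only at $\xi_3=0$, a null set. Once the a.e. invertibility is established, the product rule in \textbf{GLT~3}, combined with Theorem~\ref{1D orig mat}, gives the stated quotient symbol for $\{P_{N,M}^{-1}A_{\alpha,N,M}\}$.
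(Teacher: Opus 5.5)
Your proposal follows the same route as the paper. Both show that circulant minus Toeplitz is zero-distributed: the paper cites the literature, while you give a self-contained rank-plus-small-norm splitting. From this, the circulants and then $P_{N,M}$ acquire the claimed GLT symbols, and \textbf{GLT~3} (inverse and product) gives the quotient symbol; you also supply details the paper leaves implicit, namely the Kronecker lifting, the limit $\gamma d_N\to\gamma^*\hat d$, and the a.e.\ invertibility of the denominator.

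Two small corrections in that last check do not affect the conclusion, since the analyticity argument alone already gives a null zero set. First, $f_\alpha$ loses smoothness at $\xi_1=0$, where $1+e^{i(\xi_1+\pi)}=1-e^{i\xi_1}$ vanishes, not at $\xi_1=\pi$. Second, with $q_\theta$ as printed in Proposition~\ref{symb q_theta} one has $\mathrm{Re}\,q_\theta=(2\theta-1)(1-\cos\xi_3)/|\theta+(1-\theta)e^{i\xi_3}|^2\ge 0$ for $\theta\ge 1/2$; your sign $\le 0$ is the one dictated by the minus sign in $H_{\theta,M}$.
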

\begin{proof}
From the literature \cite{rev-CN,superopt-ESC}, it is known that $\{\overline{G}_{\alpha,N}-\widetilde{C}_{\alpha,N}\}_N\sim_\sigma 0$. The same clustering at zero is true for $\{\overline{G}_{\alpha,N}-\widehat{C}_{\alpha,N}\}_N$  since the approximation is even better (the only neglected coefficient is $g_N^{(\alpha)}$). In both cases the clustering is strong since the generating function is continuous and $2\pi$-periodic \cite{rev-CN,superopt-ESC}.
Now by axiom \textbf{GLT 2.}, part 3, $\{\overline{G}_{\alpha,N}-{C}_{\alpha,N}\}_N\sim_\sigma 0$ if and only if $\{\overline{G}_{\alpha,N}-{C}_{\alpha,N}\}_N\sim_{\mathrm{GLT}} 0$, {${C}_{\alpha,N} \in \{\widetilde{C}_{\alpha,N}, \widehat{C}_{\alpha,N}\}$}. Since $\{\overline{G}_{\alpha,N}\}_N$ is also a GLT matrix sequence, by axiom \textbf{GLT 3.}, part 3, it is clear that both the approximating matrix sequences $\{\widetilde{C}_{\alpha,N}\}_N$,
$\{\hat{C}_{\alpha,N}\}_N$ are both GLT matrix sequences with the same GLT symbol as $\{\overline{G}_{\alpha,N}\}_N$, as in Theorem \ref{1D th:} and Remark \ref{rem:extending1}.

By following the very same steps as in the proof of Theorem \ref{1D th:}, Theorem \ref{1D orig mat} and of the reasoning in Remark \ref{rem:extending1}, we end up with the conclusion
\[
{\{P_{N,M}\}_{N,M}}\sim_{\mathrm{GLT}}\gamma^* \hat d f_{\alpha}(\xi_1)+q_{\theta}(\xi_3)
\]
with {$\hat d=\int_\Omega a(x)$} and $d=d_N$ converging to $\hat d$ as $N$ tends to infinity. As a consequence
\[
{\{P_{N,M}^{-1} A_{\alpha,N,M}\}_{N,M}}\sim_{\mathrm{GLT}} \frac{\gamma^* a(x)f_{\alpha}(\xi_1)+q_{\theta}(\xi_3)}{\gamma^*\hat d f_{\alpha}(\xi_1)+q_{\theta}(\xi_3)},
\]
by using axiom \textbf{GLT 3.}, part 4 and part 3.
\par
 Notice that this GLT symbol is equal to $1$ in the constant coefficient case since $a=\hat d$, so ensuring a clustering at $1$ for the preconditioned matrix sequence.
\end{proof}

\section{Preconditioning strategy: 2D space setting}\label{sec:prec2D}

In the two-dimensional case, the spatial discretization yields a two-level Toeplitz structure, while the temporal discretization add a further level so that a three-level matrix is obtained, in the all-at-once formulation. To construct an efficient preconditioner for this system, we approximate only the spatial part using a multilevel circulant matrix, while the temporal block $Q_{\theta,M}$ remains unchanged. The spatial operator {$G_{\boldsymbol{\alpha},\textbf{n}}$} is the diagonal sampling matrix {$D_\textbf{n}(a)$} multiplied with the two-level Toeplitz matrix
\[
{U_{\boldsymbol{\alpha},\textbf{n}}} {=    \gamma_1 \overline{G}_{\alpha_1,N_1}\otimes I_{N_2} + I_{N_1} \otimes \gamma_2 \overline{G}_{\alpha_2,N_2}}.
\]
To form the preconditioner, we replace {$U_{\boldsymbol{\alpha},\textbf{n}}$} with its two-level circulant approximation
\[
{C_{\boldsymbol{\alpha},\textbf{n}} = \gamma_1 C_{\alpha_1,N_1} \otimes I_{N_2} + I_{N_1} \otimes \gamma_2 C_{\alpha_2,N_2},
}\]
where {$C_{\alpha_j,N_j}$,  $j=1,2$} are the circulant matrices {already} considered in the one-dimensional case, {i.e.,
$C_{\alpha_j,N_j} \in \{\widetilde{C}_{\alpha_j,N_j},\widehat{C}_{\alpha_j,N_j}\}$.
}
\par
{Since the constants
$\gamma_1 = \Delta t/(\Delta x_1)^{\alpha_1}$ and
$\gamma_2 = \Delta t/(\Delta x_2)^{\alpha_2}$
are sufficiently large,} the circulant approximation is applied only to the spatial blocks, while the time stepping matrix $Q_{\theta,M}$ is left unchanged.
{As before, to} preserve the circulant structure, the diagonal sampling matrix {$D_\textbf{n}(a)$} is replaced by the spatial average of the diffusion coefficient
\[
d = \frac{1}{N_1N_2} \sum_{{i_1}=1}^{N_1}\sum_{{i_2}=1}^{N_2} a{_{i_1,i_2}},
\]
and the preconditioner is defined as
\begin{equation}\label{eq:P2a}
{\widetilde{P}_{\boldsymbol{\alpha},\textbf{n},M} = d \, \widetilde{C}_{\boldsymbol{\alpha},\textbf{n}} \otimes I_M + I_{\textbf{n}} \otimes Q_{\theta,M},
}\end{equation}
or
\begin{equation}\label{eq:P2b}
{\widehat{P}_{\boldsymbol{\alpha},\textbf{n},M} = d \, \widehat{C}_{\boldsymbol{\alpha},\textbf{n}} \otimes I_M + I_{\textbf{n}} \otimes Q_{\theta,M},
}\end{equation}
{where, clearly, it holds
\[
C_{\boldsymbol{\alpha},\textbf{n}} = F_{\boldsymbol{\textbf{n}}}^* \  \Lambda_{\boldsymbol{\alpha},\textbf{n}} \ F_{\boldsymbol{\textbf{n}}}
\]
with $C_{\boldsymbol{\alpha},\textbf{n}} \in \{\widetilde{C}_{\boldsymbol{\alpha},\textbf{n}},\widehat{C}_{\boldsymbol{\alpha},\textbf{n}}\}$, $F_{\boldsymbol{\textbf{n}}} = F_{N_1} \otimes F_{N_2}$ and $\Lambda_{\boldsymbol{\alpha},\textbf{n}} = \gamma_1 \Lambda_{\alpha_1,N_1} \otimes I_{N_2} + I_{N_1} \otimes \gamma_2 \Lambda_{\alpha_2,N_2}$.
}%

Now we give only the statement of results regarding the invertibility of $P$, the computation cost related to the chosen preconditioner, and the  clustering results for the related preconditioned matrix sequence. The tools and the structure of the proofs follow verbatim those of Section \ref{sec:prec}.

\begin{theorem}
Let {$P \in \{\widetilde{P}_{\boldsymbol{\alpha},\textbf{n},M},\widehat{P}_{\boldsymbol{\alpha},\textbf{n},M}\}$} be defined as in~\eqref{eq:P2a} and as in~\eqref{eq:P2b}, respectively. Set $U = F_{\textbf{n}}\otimes I_{M}$.
Then $P$ is unitarily similar to a block diagonal matrix
\[
U^{*} P U
= \operatorname{blkdiag}_{j=1}^{N_{1}N_{2}}
\left(d\,\lambda_{j} I_{M} + Q_{\theta,M}\right),
\]
where $\lambda_{j}$ are the eigenvalues of
$C_{\boldsymbol{\alpha},\textbf{n}}$, $j=1,\ldots,N_1N_2$.
Each diagonal block
\[
D_{j} = d\,\lambda_{j} I_{M} + Q_{\theta,M}, \ \ \ j=1,\ldots,N_1N_2,
\]
is an invertible lower triangular Toeplitz matrix.
\end{theorem}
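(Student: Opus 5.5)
The plan is to repeat the argument of Theorem~\ref{thm:bd_P1}, now with the two-level Fourier matrix $F_{\mathbf{n}}=F_{N_1}\otimes F_{N_2}$, and then to give a more careful invertibility argument for the diagonal blocks. The one-dimensional argument used that the circulant eigenvalues have nonzero imaginary part. That property can fail once the two directions are summed, since $\gamma_1\mu+\gamma_2\nu$ may be real even when $\mu$ and $\nu$ are not.

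\emph{Step 1: block diagonalization.} Each one-level circulant $C_{\alpha_j,N_j}$ is diagonalized by $F_{N_j}$. By the mixed-product property,
\[
C_{\boldsymbol{\alpha},\mathbf{n}}=(F_{N_1}\otimes F_{N_2})\bigl(\gamma_1\Lambda_{\alpha_1,N_1}\otimes I_{N_2}+I_{N_1}\otimes\gamma_2\Lambda_{\alpha_2,N_2}\bigr)(F_{N_1}\otimes F_{N_2})^{*}.
\]
(If one uses the paper's convention $F^*\Lambda F$ instead, $U$ is replaced by $U^*$; nothing else changes.) The middle factor $\Lambda_{\boldsymbol{\alpha},\mathbf{n}}$ is diagonal, and its entries are $\lambda_j=\gamma_1\mu_{k}+\gamma_2\nu_{l}$, where $\mu_k$ and $\nu_l$ are the eigenvalues of the two one-level circulants and $j\leftrightarrow(k,l)$ in lexicographic order. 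Conjugating $P$ by $U=F_{\mathbf{n}}\otimes I_M$ and using $(A\otimes B)(C\otimes D)=AC\otimes BD$ together with unitarity of $F_{\mathbf{n}}$ gives $U^*PU=d\,\Lambda_{\boldsymbol{\alpha},\mathbf{n}}\otimes I_M+I_{\mathbf{n}}\otimes Q_{\theta,M}$. This matrix is block diagonal with blocks $D_j=d\lambda_jI_M+Q_{\theta,M}$. Each $D_j$ is lower triangular Toeplitz, because $Q_{\theta,M}$ is.

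\emph{Step 2: invertibility, the main step.} As in Theorem~\ref{thm:bd_P1}, write $D_j=H_{\theta,M}^{-1}\bigl(d\lambda_jH_{\theta,M}+H_M\bigr)$. The second factor is lower bidiagonal with constant diagonal $1-\theta d\lambda_j$, so it suffices to show $1-\theta d\lambda_j\neq0$. I would prove the stronger fact $\mathrm{Re}\,\lambda_j<0$. Together with $d>0$ (the natural modelling assumption that $a$ is positive, so that its average $d$ is positive) and $\theta\in[0,1]$, this gives $\mathrm{Re}(1-\theta d\lambda_j)\ge1$.

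To show $\mathrm{Re}\,\mu_k<0$, apply Gershgorin's theorem to the one-level circulant $C_{\alpha_1,N_1}$. Its diagonal entry is $g_1^{(\alpha_1)}=-\alpha_1$. Its off-diagonal entries in any row are a subset of $\{g_0^{(\alpha_1)},g_2^{(\alpha_1)},\dots,g_{N_1-1}^{(\alpha_1)}\}$: for the Strang choice some are taken from the upper part ($g_0$) and the rest from the first column, while for $\widehat C$ the entries are exactly $g_2,\dots,g_{N_1-1},g_0$. All of these are positive by Proposition~\ref{alpha coff}. Also by Proposition~\ref{alpha coff}, $\sum_{k=0}^{n}g_k^{(\alpha_1)}<0$, so their sum is strictly less than $\alpha_1$. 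Every Gershgorin disc therefore lies in the open left half-plane, and $\mathrm{Re}\,\mu_k<0$. The same argument gives $\mathrm{Re}\,\nu_l<0$. Since $\gamma_1,\gamma_2>0$, it follows that $\mathrm{Re}\,\lambda_j<0$ for every $j$.

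I expect the main care to be needed in checking precisely which coefficients appear in each row of the Strang circulant, so that the strict Gershgorin bound holds. Once that is verified, invertibility of every $D_j$, and hence of $P$, follows immediately.
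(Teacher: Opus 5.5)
Your Step~1 is the same as the paper's argument. The paper proves this theorem only by pointing to the proof of Theorem~\ref{thm:bd_P1}: block diagonalization via the mixed-product property, then the lower triangular Toeplitz structure inherited from $Q_{\theta,M}$.

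Your Step~2 takes a genuinely different route, and it is the more reliable one. Like you, the paper reduces invertibility to the diagonal entries $1-\theta d\lambda_j$ of the bidiagonal factor. It then claims these cannot vanish because every circulant eigenvalue has nonzero imaginary part. That premise already fails in one dimension:
\begin{itemize}
\item a real circulant always has the real eigenvalue $\sum_k c_k$ at frequency zero; for $\widehat{C}_{\alpha,N}$ this is $\sum_{k=0}^{N-1}g_k^{(\alpha)}<0$;
\item it has a second real eigenvalue, $\sum_k(-1)^kc_k$, at frequency $\pi$ when $N$ is even;
\item in two dimensions, conjugate frequencies can combine into further real $\lambda_j$, as you noticed.
\end{itemize}
Your Gershgorin bound $\mathrm{Re}\,\lambda_j<0$ handles all eigenvalues at once. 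The price is a sign condition on $d$, but only $d\ge 0$ is needed, since then $\mathrm{Re}(1-\theta d\lambda_j)=1-\theta d\,\mathrm{Re}\,\lambda_j\ge 1$. That is exactly the modelling assumption $a\ge 0$. By contrast, the paper's argument would have worked for any real $d$, had its premise been true.

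Two minor remarks. First, $\theta=0$ must be excluded: then $H_{\theta,M}$ is strictly lower triangular and $Q_{\theta,M}$ does not exist, so write $\theta\in(0,1]$. Second, the bookkeeping you anticipate for the Strang circulant is unnecessary. We have $g_0^{(\alpha)}+\sum_{k\ge 2}g_k^{(\alpha)}=-g_1^{(\alpha)}=\alpha$ with all terms positive. Hence any collection of distinct coefficients from $\{g_0^{(\alpha)}\}\cup\{g_k^{(\alpha)}\}_{k\ge 2}$, padded with zeros, sums to strictly less than $\alpha$. That is all the off-diagonal part of a row of either circulant can contain, whatever wrap-around convention is used. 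This also covers very small $N_j$, where a row may contain $g_2^{(\alpha)}$, which is not in your stated set $\{g_0^{(\alpha)},g_2^{(\alpha)},\dots,g_{N_j-1}^{(\alpha)}\}$.
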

\begin{remark}
The computation of a vector $P^{-1}v$ is carried out in three steps.
\begin{enumerate}
\item  Step 1: compute $\dot{v} = (F_{\boldsymbol{\textbf{n}}}^* \otimes I_M) v$.
This requires $O(N_1N_2M \log (N_1N_2))$ operations.
\item Step 2: compute $\ddot{v} =
\operatorname{blkdiag}\big(D_j^{-1}\big)_{j=1}^{N_1N_2} \dot{v},$ \\
where each system involves an ILTT matrix.
The first column of each inverse can be computed in $O(M \log M)$ operations.
The total cost is $O(N_1N_2M \log M)$.
All systems are independent and can be solved in parallel.
\item Step 3: compute
$v = (F_{\boldsymbol{\textbf{n}}} \otimes I_M)\ddot{v}$,
with cost $O(N_1N_2M \log (N_1N_2))$.
\end{enumerate}
Therefore, computing $P^{-1} v$ requires $O\!\left(N_1N_2M\log (N_1N_2)
+ N_1N_2M\log M\right),$
which is nearly linear in the problem size.
\par
Furthermore, in line with the arguments in Remark \ref{rem:efficiency}, for solving $N_1N_2$ linear systems with coefficient matrices $D_j$, $j=1,\ldots,N_1N_2$, the related cost is reduced from $\mathcal{O}(N_1N_2M \log M)$ to $\mathcal{O}(N_1N_2M)$.
\end{remark}
\begin{theorem} \label{thm:inv(P)A clustering 2D}
{Let {$P  \in\{\widetilde{P}_{\boldsymbol{\alpha},\textbf{n},M},
\widehat{P}_{\boldsymbol{\alpha},\textbf{n},M}\}$}, as defined  in~\eqref{eq:P2a} and in~\eqref{eq:P2b}, respectively.
Then,
\[
{\{P_{\textbf{n},M}\}_{\textbf{n},M}}\sim_{\mathrm{GLT}}
\hat d (\gamma_1^*f_{\alpha_1}(\xi_1)+\gamma_2^*f_{\alpha_2}(\xi_2))
+q_{\theta}(\xi_3),
\]
with {$\hat d=\int_\Omega a(x_1,x_2)$} and {$d=d_\textbf{n}$} converging to $\hat d$ as $\textbf{n}$ tends to infinity, and
\[
{\{P_{\textbf{n},M}^{-1} A_{\textbf{n},M}\}_{\textbf{n},M}}\sim_{\mathrm{GLT}} \frac{\gamma^* a(x_1,x_2)(\gamma_1^*f_{\alpha_1}(\xi_1)+\gamma_2^*f_{\alpha_2}(\xi_2))+q_{\theta}(\xi_3)}{\gamma^*\hat \hat{d} (\gamma_1^*f_{\alpha_1}(\xi_1)+\gamma_2^*f_{\alpha_2}(\xi_2))+q_{\theta}(\xi_3)},
\]
where necessarily $\gamma_2^* = 0$ if $\alpha_1 > \alpha_2$ and alternatively $\gamma_1^* = 0$ if $\alpha_2 > \alpha_1$.
}
\end{theorem}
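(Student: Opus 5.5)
The plan mirrors the proof of Theorem~\ref{thm:inv(P)A clustering 1D}, carried out level by level. We view all sequences as $3$-level GLT sequences in the variables $(x_1,x_2,t)$ and $(\xi_1,\xi_2,\xi_3)$. Here $I_{N_2}$, $I_M$ and the Toeplitz matrices of the three levels are tensorized in the standard way, so that a Kronecker product of one-level GLT/Toeplitz factors has the product symbol (e.g.\ $\overline{G}_{\alpha_1,N_1}\otimes I_{N_2}\otimes I_M = T_{(N_1,N_2,M)}(f_{\alpha_1}(\xi_1))$). We first treat the circulant parts. As in the one-dimensional proof, \cite{rev-CN,superopt-ESC} give $\{\overline{G}_{\alpha_j,N_j}-C_{\alpha_j,N_j}\}_{N_j}\sim_\sigma 0$ for both $C_{\alpha_j,N_j}\in\{\widetilde C_{\alpha_j,N_j},\widehat C_{\alpha_j,N_j}\}$. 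Indeed, the difference is a low-rank term plus a term of small norm, since $f_{\alpha_j}$ is in the Wiener class by Proposition~\ref{symb f_alpha}. Tensorizing preserves this splitting. The rank becomes $O(\text{rank}\cdot N_2 M)$, which is still $o(N_1N_2M)$, and the spectral norm is unchanged. Hence, by the first theorem on zero-distributed sequences and \textbf{GLT 2} part 3, the Kronecker-lifted differences are zero-distributed GLT sequences. By \textbf{GLT 3}, the lifted circulants then carry the symbols $f_{\alpha_1}(\xi_1)$ and $f_{\alpha_2}(\xi_2)$.

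Next we handle the scalings. We write $\gamma_j C_{\alpha_j,N_j}$, normalized as in Theorem~\ref{2D th:}: either $\gamma_j\to\gamma_j^*$ finite, or, when $\max\{\gamma_1^*,\gamma_2^*\}=0$, we divide by $\gamma=\max\{\gamma_1,\gamma_2\}$ as in Remark~\ref{rem:extending1}. The factor $\gamma^*$ appearing in the statement is read as this normalization, and it is harmless because the symbol of the quotient is unchanged by a common scaling of numerator and denominator. Scalar sequences converging to a limit multiply GLT sequences into GLT sequences with the limit times the symbol. This holds because $(\gamma_j-\gamma_j^*)X_n$ has norm tending to zero whenever $X_n$ is bounded, as $\overline{G}$ and $C$ are, their norms being bounded by $2\alpha_j$. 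Likewise $d=d_{\mathbf n}$ is a Riemann sum of the Riemann integrable $\hat a$ on $[0,1]^2$ (Remark~\ref{rem:extending2}), so $d_{\mathbf n}\to\hat d=\int\hat a$ (the normalized mean of $a$). Combining these facts with Proposition~\ref{symb q_theta} and \textbf{GLT 3} (sum and product) gives the first claim. This includes the constraint $\gamma_2^*=0$ when $\alpha_1>\alpha_2$, which is proved exactly as at the end of the proof of Theorem~\ref{2D th:}.

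For the preconditioned sequence we use Theorem~\ref{2D orig mat} for $\{A_{\boldsymbol\alpha,\mathbf n,M}\}$. We then apply \textbf{GLT 3} part 4 to $\{P\}$, noting that $P$ is invertible by the preceding theorem, so $P^\dagger=P^{-1}$. Finally part 3 gives the product, and the stated quotient symbol follows.

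The main obstacle is the invertibility a.e.\ of the denominator symbol $\hat d(\gamma_1^*f_{\alpha_1}+\gamma_2^*f_{\alpha_2})+q_\theta$, which \textbf{GLT 3} part 4 requires. We would argue that it vanishes only on a null set. For fixed $(\xi_1,\xi_2)$, the map $\xi_3\mapsto q_\theta(\xi_3)$ is a nonconstant Möbius image of the circle (for $\theta\ne 1/2$, or generically), so it hits any given value at finitely many $\xi_3$. By Fubini the zero set then has measure zero. For $\theta=1/2$ one checks the real parts: $\Re f_{\alpha_j}\le 0$ with equality only at $\xi_j=0$, and $q_{1/2}$ is purely imaginary. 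The same conclusion follows. A secondary technical point is the mixed-level bookkeeping, namely that Kronecker lifts of one-level zero-distributed sequences remain zero-distributed in the multilevel sense. This was settled above through the rank/norm splitting.
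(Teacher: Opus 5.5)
Your proposal follows the paper's route. The paper gives no separate proof here and only states that the one-dimensional argument carries over verbatim: circulant--Toeplitz differences are zero-distributed, so the circulants inherit the Toeplitz symbols; then \textbf{GLT 3} gives the symbol of $P$, and parts 4 and 3 give the symbol of $P^{-1}A$. You also supply what the paper leaves implicit. This includes the Kronecker lift of the zero-distributed errors, and $d_{\mathbf n}\to\hat d$ as a Riemann sum, which you correctly identify as the normalized mean of $a$. It also includes the a.e.\ invertibility of the denominator symbol, which the paper never checks. Your M\"obius/Fubini argument for that is sound and needs no hedging about $\theta$: the map $z\mapsto(1-z)/(\theta+(1-\theta)z)$ has nonzero determinant for every admissible $\theta\in(0,1]$, so it is injective on the unit circle.

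One step should be corrected: your treatment of the factor $\gamma^*$ in the quotient. It cannot be absorbed by a common scaling of numerator and denominator, because it multiplies only the spatial part and not $q_\theta$. The normalized branch is also unavailable in the space--time setting: dividing by $\gamma\to 0$ makes $\gamma^{-1}Q_{\theta,M}$ unbounded, so it has no GLT symbol. Your argument actually proves the quotient with $\gamma^*$ removed, i.e.\ $\bigl(a(\gamma_1^*f_{\alpha_1}+\gamma_2^*f_{\alpha_2})+q_\theta\bigr)/\bigl(\hat d(\gamma_1^*f_{\alpha_1}+\gamma_2^*f_{\alpha_2})+q_\theta\bigr)$. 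This is the formula consistent with the first claim. When $\gamma_1^*=\gamma_2^*=0$ it equals $1$ and needs no normalization. So the stray $\gamma^*$ should be treated as a leftover from the one-dimensional statement, not explained away.
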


\section{Numerical experiments}\label{sec:num}
We conclude our analysis by numerically evaluating the performance of the proposed preconditioning strategy.
The objective is to examine its capability to cluster the eigenvalues around~1, reduce the condition number,
and enhance the efficiency of the GMRES method in terms of iteration counts and CPU time.

All numerical experiments were conducted in \textsc{MATLAB}~R2022b on a laptop equipped with an 11th~Gen
Intel(R)~Core(TM)~i5-1155G7 CPU @~2.50\,GHz, 16\,GB RAM, running Windows~11~Pro
(version~23H2, build~22631.5189).
\subsection{Eigenvalue distribution}
We first assess the performance of the preconditioners {$\widetilde{P}_{\alpha,N,M}$ and $\widehat{P}_{\alpha,N,M}$} by examining its influence on the eigenvalue distribution and condition number of the coefficient matrix.
Figure~\ref{Fig:Aes1} presents the eigenvalue plots in the complex plane for different values of the fractional order $\alpha$, with the problem size fixed at {$N=M = 2^6$}.
The eigenvalues of the unpreconditioned matrix are widely scattered along the positive real axis, indicating poor spectral clustering and, consequently, a high level of ill-conditioning. {The behaviour is worsening as the fractional diffusion parameter increases as expected.}
%

As shown in Table~\ref{tab:cond-numbers}, the unpreconditioned matrix exhibits rapidly growing 2-norm condition numbers, even for moderate values of $N$ and $M$. Though the condition numbers of the preconditioned matrix remain bounded and stable, it's not so lower.
Indeed, a more refined spectral analysis highlights that the condition number alone does not provide an exhaustive description of the eigenvalues distribution properties. \par
In Figures~\ref{Fig:PcircStranges1}-\ref{Fig:Pcircg0es1} are reported the eigenvalues of preconditioned matrices {$\widetilde{P}_{\alpha,N,M}^{-1}A_{\alpha,N,M}$ and $\widehat{P}_{\alpha,N,M}^{-1}A_{\alpha,N,M}$}
for different values of the fractional order $\alpha$, with the problem size fixed at {$N=M = 2^6$.
It evident as we can observe a pronounced clustered distributions around the unity.
The same behaviour is observed in the non constant coefficient case: the related matrix sequences not cluster at $1$ but at the range of
the nice function reported in Theorem \ref{thm:inv(P)A clustering 1D}.}%
\begin{table}
\centering
\renewcommand{\arraystretch}{1}
\setlength{\tabcolsep}{3pt}
\begin{tabular}{|c c |ccc|ccc|ccc|}
\hline
\multicolumn {2}{|c}{} & \multicolumn {3}{|c}{$\alpha = 1.3$} & \multicolumn {3}{|c}{$\alpha = 1.5$} & \multicolumn {3}{|c|}{$\alpha = 1.8$} \\
\hline
$N$ & $M$ & $A$ & $\widetilde{P}^{-1}A$ & $\widehat{P}^{-1}A$ & $A$ & $\widetilde{P}^{-1}A$ & $\widehat{P}^{-1}A$ & $A$ & $\widetilde{P}^{-1}A$ & $\widehat{P}^{-1}A$ \\
\hline
$2^4$ &$2^4$ &2.013e+02 &1.727e+01 &1.953e+01 &1.634e+02 &2.662e+01 &3.238e+01 &1.483e+02 &6.765e+01 &8.167e+01 \\
$2^4$ &$2^5$ &7.475e+02 &1.728e+01 &1.955e+01 &5.604e+02 &2.664e+01 &3.241e+01 &3.957e+02 &6.770e+01 &8.173e+01 \\
$2^4$ &$2^6$ &2.937e+03 &1.729e+01 &1.955e+01 &2.160e+03 &2.664e+01 &3.241e+01 &1.417e+03 &6.771e+01 &8.174e+01 \\
$2^4$ &$2^7$ &1.170e+04 &1.729e+01 &1.955e+01 &8.562e+03 &2.665e+01 &3.241e+01 &5.520e+03 &6.772e+01 &8.175e+01 \\
\hline
$2^5$ &$2^4$ &2.352e+02 &3.878e+01 &4.367e+01 &2.273e+02 &6.930e+01 &8.360e+01 &3.173e+02 &2.190e+02 &2.604e+02 \\
$2^5$ &$2^5$ &7.854e+02 &3.882e+01 &4.372e+01 &6.177e+02 &6.937e+01 &8.367e+01 &5.477e+02 &2.191e+02 &2.606e+02 \\
$2^5$ &$2^6$ &3.007e+03 &3.883e+01 &4.373e+01 &2.226e+03 &6.938e+01 &8.369e+01 &1.548e+03 &2.192e+02 &2.607e+02 \\
$2^5$ &$2^7$ &1.190e+04 &3.883e+01 &4.373e+01 &8.684e+03 &6.939e+01 &8.370e+01 &5.650e+03 &2.192e+02 &2.607e+02 \\
\hline
$2^6$ &$2^4$ &3.221e+02 &9.102e+01 &1.023e+02 &4.187e+02 &1.878e+02 &2.256e+02 &9.089e+02 &7.328e+02 &8.655e+02 \\
$2^6$ &$2^5$ &8.632e+02 &9.113e+01 &1.024e+02 &7.886e+02 &1.880e+02 &2.258e+02 &1.123e+03 &7.334e+02 &8.661e+02 \\
$2^6$ &$2^6$ &3.096e+03 &9.115e+01 &1.024e+02 &2.379e+03 &1.880e+02 &2.258e+02 &2.057e+03 &7.335e+02 &8.663e+02 \\
$2^6$ &$2^7$ &1.206e+04 &9.116e+01 &1.024e+02 &8.859e+03 &1.880e+02 &2.258e+02 &6.088e+03 &7.336e+02 &8.664e+02 \\
\hline
$2^7$ &$2^4$ &5.478e+02 &2.189e+02 &2.457e+02 &9.705e+02 &5.199e+02 &6.231e+02 &2.959e+03 &2.495e+03 &2.936e+03 \\
$2^7$ &$2^5$ &1.064e+03 &2.191e+02 &2.460e+02 &1.317e+03 &5.204e+02 &6.237e+02 &3.166e+03 &2.497e+03 &2.938e+03 \\
$2^7$ &$2^6$ &3.276e+03 &2.192e+02 &2.461e+02 &2.836e+03 &5.205e+02 &6.238e+02 &4.029e+03 &2.497e+03 &2.939e+03 \\
$2^7$ &$2^7$ &1.227e+04 &2.192e+02 &2.461e+02 &9.270e+03 &5.205e+02 &6.239e+02 &7.811e+03 &2.497e+03 &2.939e+03 \\
\hline
\end{tabular}
\caption{Spectral condition numbers of unpreconditioned $A$ and preconditioned matrices $\widetilde{P}^{-1}A =\widetilde{P}^{-1}_{\alpha,N,M}A_{\alpha,N,M}$ and $\widehat{P}^{-1}A = \widehat{P}^{-1}_{\alpha,N,M}A_{\alpha,N,M}$ - case $a(x)=1$.}
\label{tab:cond-numbers}
\end{table}
%
\begin{figure}%
\centering
\includegraphics[width=\textwidth]{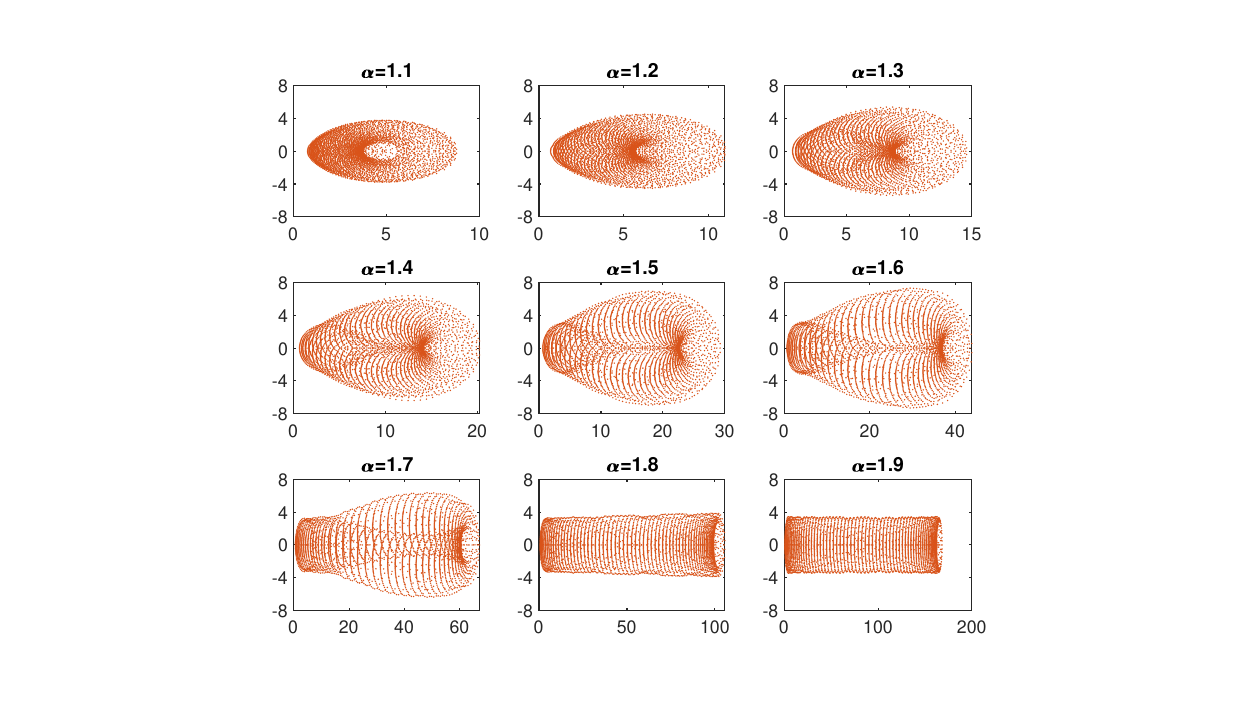}
\caption{Eigenvalues of $A_{\alpha,N,M}$  - $N=M=2^6$, $a(x)=1$.} \label{Fig:Aes1}
\end{figure}
\begin{figure}%
\centering
\includegraphics[width=\textwidth]{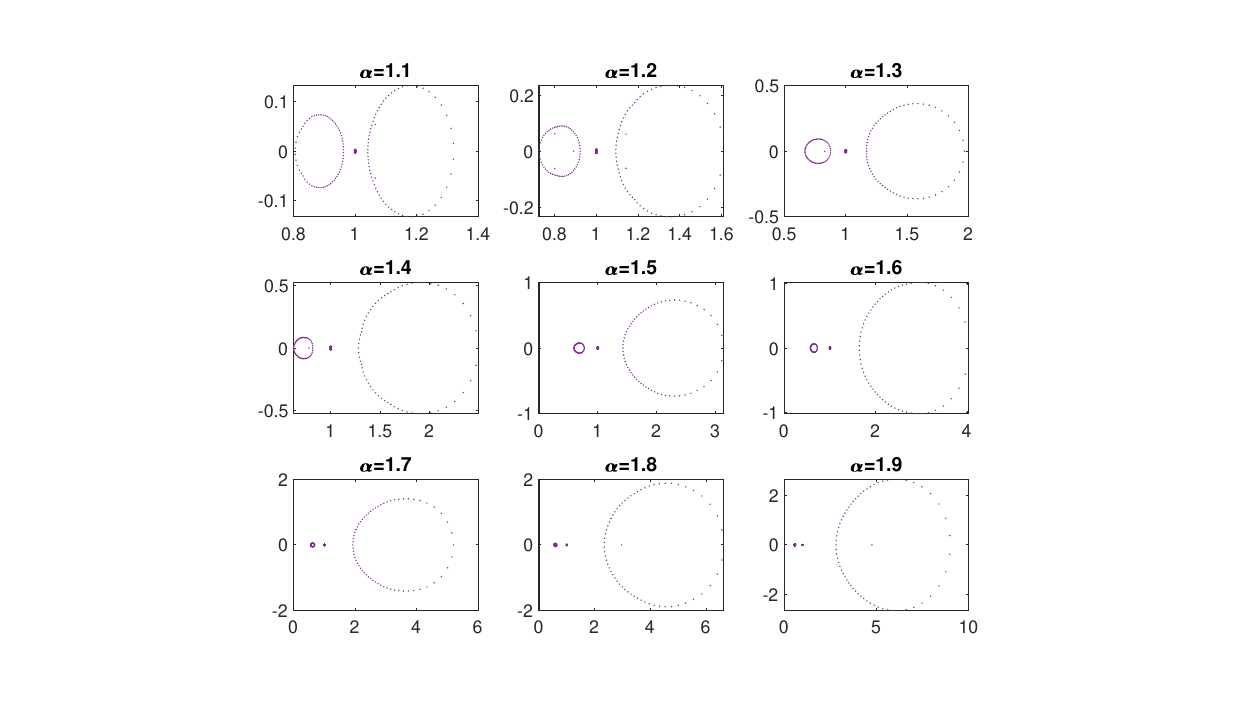}
\includegraphics[width=\textwidth]{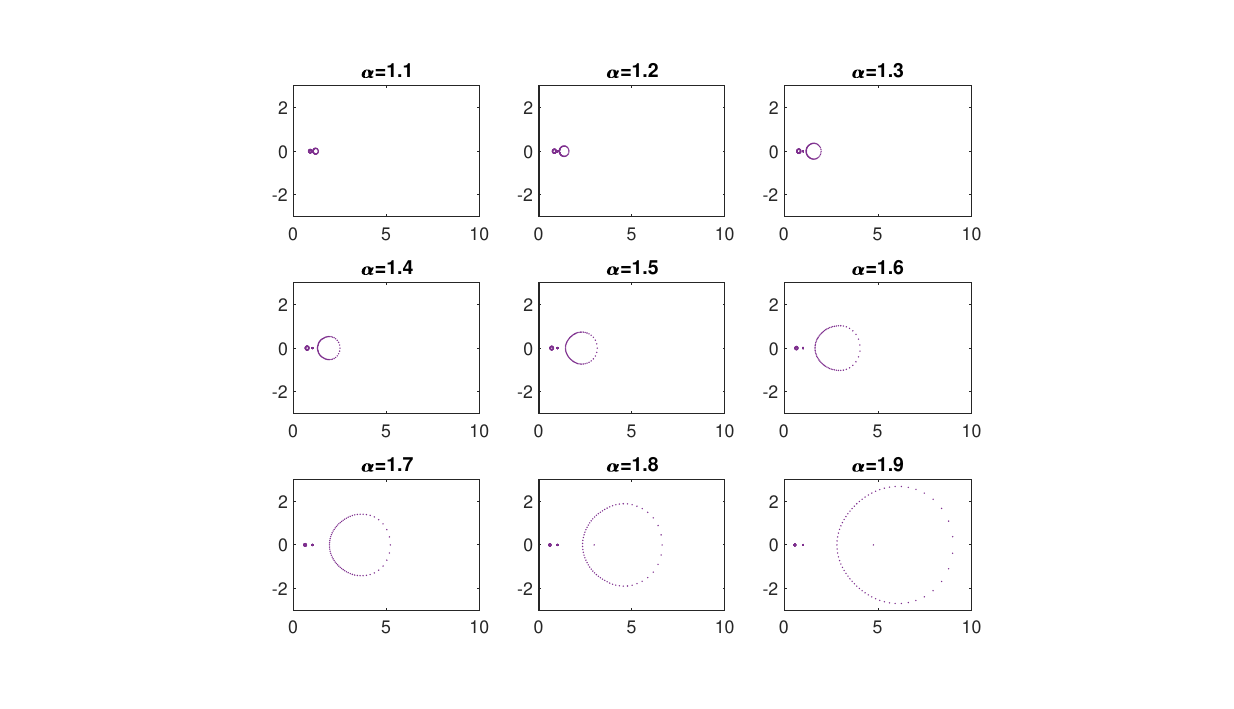}
\caption{Eigenvalues of $\widetilde{P}^{-1}_{\alpha,N,M}A_{\alpha,N,M}$ - $N=M=2^6$, $a(x)=1$.} \label{Fig:PcircStranges1}
\end{figure}
\begin{figure}%
\centering
\includegraphics[width=\textwidth]{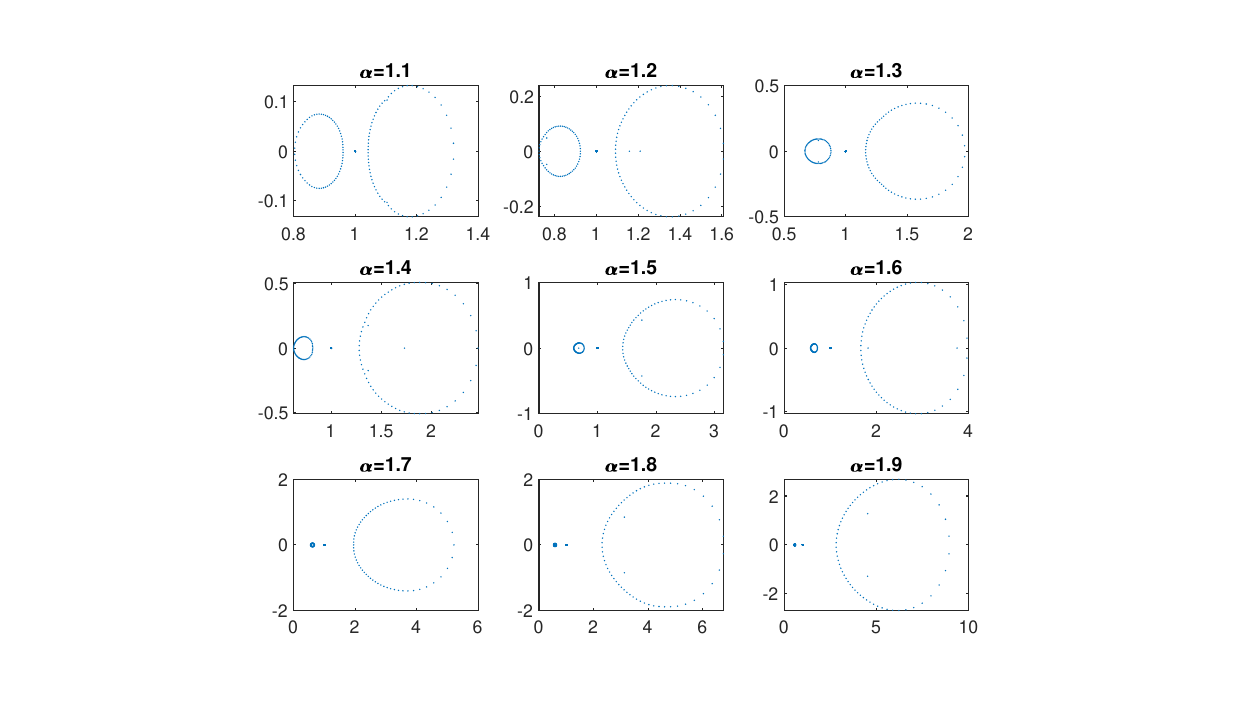}
\includegraphics[width=\textwidth]{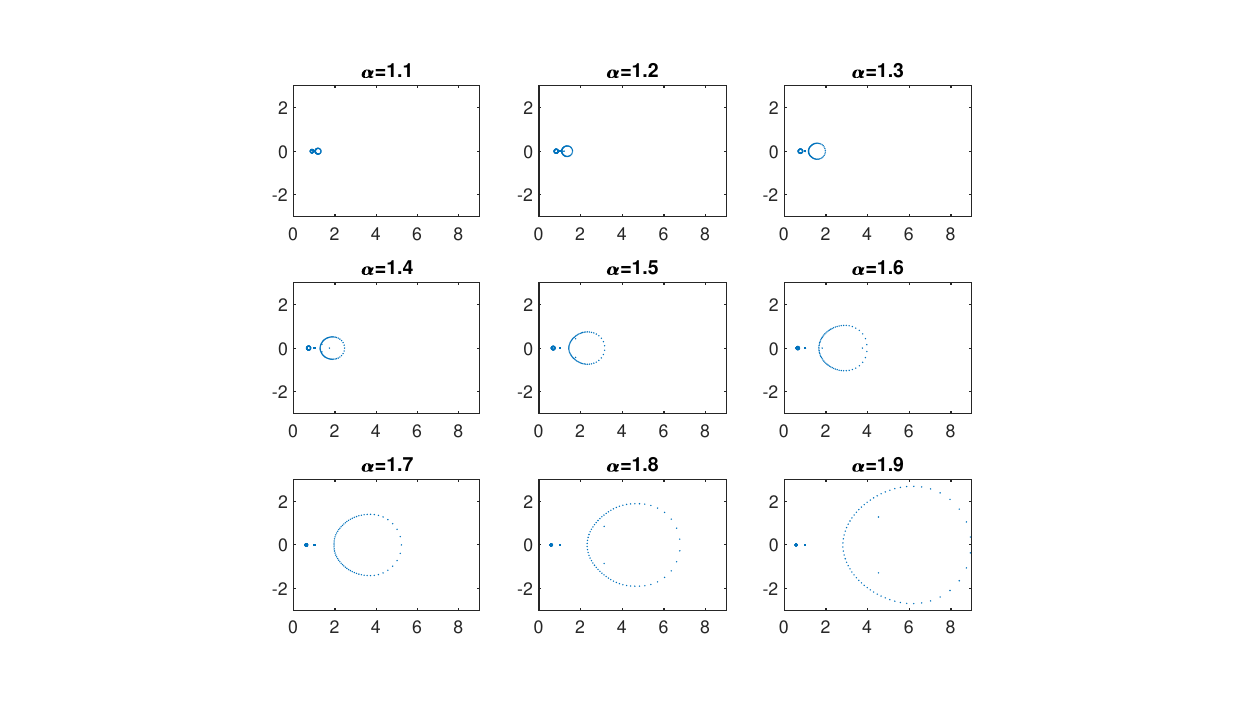}
\caption{Eigenvalues of $\widehat{P}^{-1}_{\alpha,N,M}A_{\alpha,N,M}$  - $N=M=2^6$, $a(x)=1$.} \label{Fig:Pcircg0es1}
\end{figure}
%
%
%
\begin{table}
\centering
\renewcommand{\arraystretch}{1}
\setlength{\tabcolsep}{3pt}
\begin{tabular}{|c c |ccc|ccc|ccc|}
\hline
\multicolumn {2}{|c}{} & \multicolumn {3}{|c}{$\alpha = 1.3$} & \multicolumn {3}{|c}{$\alpha = 1.5$} & \multicolumn {3}{|c|}{$\alpha = 1.8$} \\
\hline
$N$ & $M$ & $A$ & $\widetilde{P}^{-1}A$ & $\widehat{P}^{-1}A$ & $A$ & $\widetilde{P}^{-1}A$ & $\widehat{P}^{-1}A$ & $A$ & $\widetilde{P}^{-1}A$ & $\widehat{P}^{-1}A$ \\
\hline
$2^4$ &$2^4$ &1.678e+02 &1.535e+01 &1.805e+01 &1.465e+02 &2.556e+01 &3.285e+01 &1.549e+02 &8.477e+01 &1.080e+02 \\
$2^4$ &$2^5$ &5.939e+02 &1.536e+01 &1.806e+01 &4.582e+02 &2.558e+01 &3.288e+01 &3.493e+02 &8.484e+01 &1.081e+02 \\
$2^4$ &$2^6$ &2.307e+03 &1.536e+01 &1.807e+01 &1.724e+03 &2.558e+01 &3.288e+01 &1.168e+03 &8.485e+01 &1.081e+02 \\
$2^4$ &$2^7$ &9.162e+03 &1.536e+01 &1.807e+01 &6.796e+03 &2.558e+01 &3.288e+01 &4.472e+03 &8.486e+01 &1.081e+02 \\
\hline
$2^5$ &$2^4$ &2.168e+02 &3.207e+01 &3.756e+01 &2.417e+02 &6.240e+01 &7.951e+01 &4.020e+02 &2.592e+02 &3.245e+02 \\
$2^5$ &$2^5$ &6.392e+02 &3.209e+01 &3.759e+01 &5.411e+02 &6.245e+01 &7.957e+01 &5.805e+02 &2.594e+02 &3.247e+02 \\
$2^5$ &$2^6$ &2.365e+03 &3.210e+01 &3.760e+01 &1.803e+03 &6.246e+01 &7.959e+01 &1.365e+03 &2.594e+02 &3.248e+02 \\
$2^5$ &$2^7$ &9.286e+03 &3.210e+01 &3.760e+01 &6.901e+03 &6.246e+01 &7.959e+01 &4.650e+03 &2.594e+02 &3.248e+02 \\
\hline
$2^6$ &$2^4$ &3.499e+02 &7.158e+01 &8.368e+01 &5.309e+02 &1.614e+02 &2.048e+02 &1.293e+03 &8.164e+02 &1.013e+03 \\
$2^6$ &$2^5$ &7.559e+02 &7.164e+01 &8.375e+01 &8.100e+02 &1.615e+02 &2.049e+02 &1.461e+03 &8.171e+02 &1.014e+03 \\
$2^6$ &$2^6$ &2.475e+03 &7.166e+01 &8.377e+01 &2.034e+03 &1.616e+02 &2.050e+02 &2.181e+03 &8.173e+02 &1.014e+03 \\
$2^6$ &$2^7$ &9.428e+03 &7.166e+01 &8.378e+01 &7.123e+03 &1.616e+02 &2.050e+02 &5.343e+03 &8.173e+02 &1.014e+03 \\
\hline
$2^7$ &$2^4$ &6.949e+02 &1.670e+02 &1.950e+02 &1.372e+03 &4.339e+02 &5.492e+02 &4.452e+03 &2.639e+03 &3.262e+03 \\
$2^7$ &$2^5$ &1.077e+03 &1.671e+02 &1.952e+02 &1.636e+03 &4.343e+02 &5.496e+02 &4.617e+03 &2.641e+03 &3.265e+03 \\
$2^7$ &$2^6$ &2.750e+03 &1.672e+02 &1.952e+02 &2.781e+03 &4.344e+02 &5.498e+02 &5.293e+03 &2.642e+03 &3.265e+03 \\
$2^7$ &$2^7$ &9.692e+03 &1.672e+02 &1.953e+02 &7.759e+03 &4.344e+02 &5.498e+02 &8.201e+03 &2.642e+03 &3.265e+03 \\
\hline
\end{tabular}
\caption{Spectral condition numbers of unpreconditioned $A$ and preconditioned matrices $\widetilde{P}^{-1}A =\widetilde{P}^{-1}_{\alpha,N,M}A_{\alpha,N,M}$ and $\widehat{P}^{-1}A = \widehat{P}^{-1}_{\alpha,N,M}A_{\alpha,N,M}$ - case $a(x)=x^2+1$.}
\label{tab:cond-numbers-es2}
\end{table}
%
\begin{figure}%
\centering
\includegraphics[width=\textwidth]{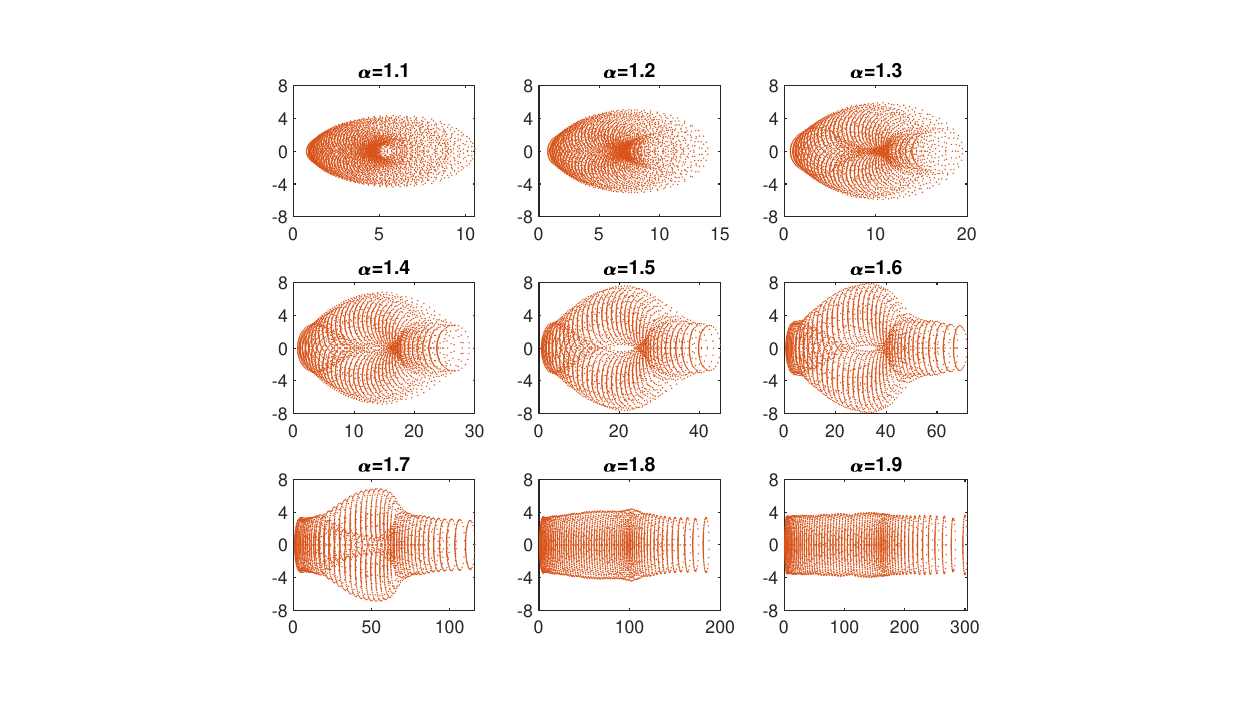}
\caption{Eigenvalues of $A_{\alpha,N,M}$  - $N=M=2^6$, $a(x)=x^2+1$.} \label{Fig:Aes2}
\end{figure}
\begin{figure}%
\centering
\includegraphics[width=\textwidth]{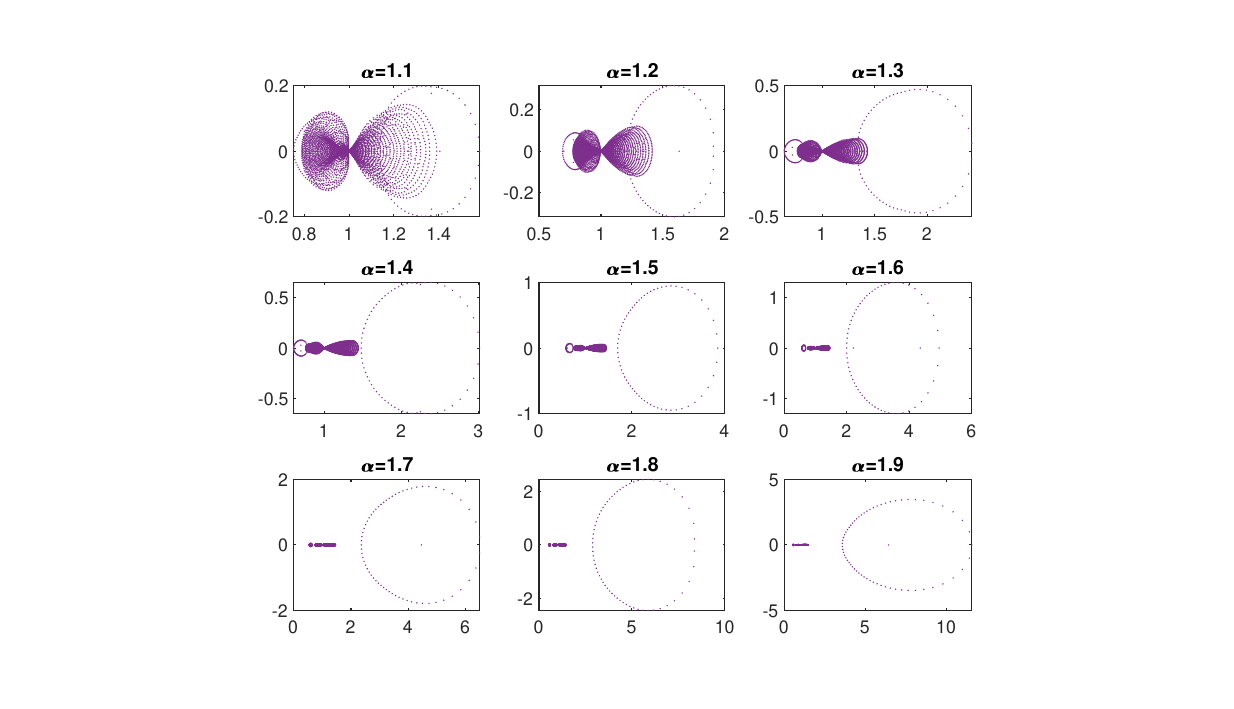}
\includegraphics[width=\textwidth]{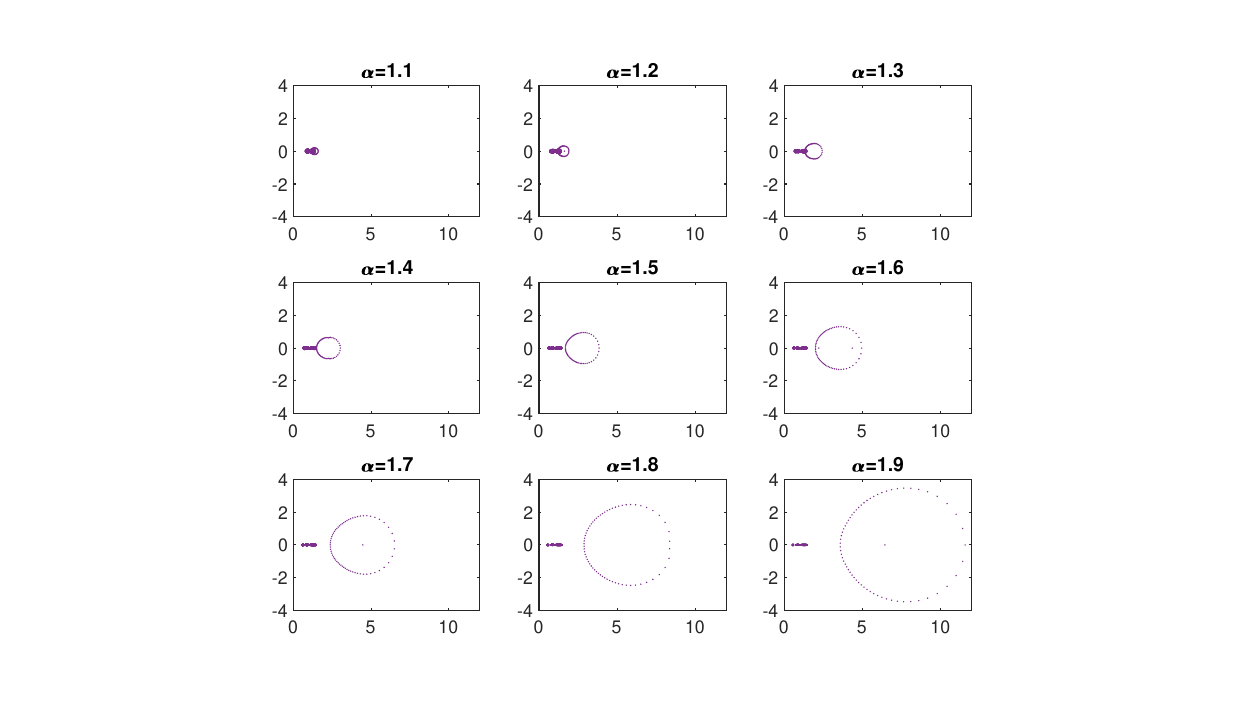}
\caption{Eigenvalues of $\widetilde{P}^{-1}_{\alpha,N,M}A_{\alpha,N,M}$  - $N=M=2^6$, $a(x)=x^2+1$.} \label{Fig:PcircStranges2}
\end{figure}
\begin{figure}%
\centering
\includegraphics[width=\textwidth]{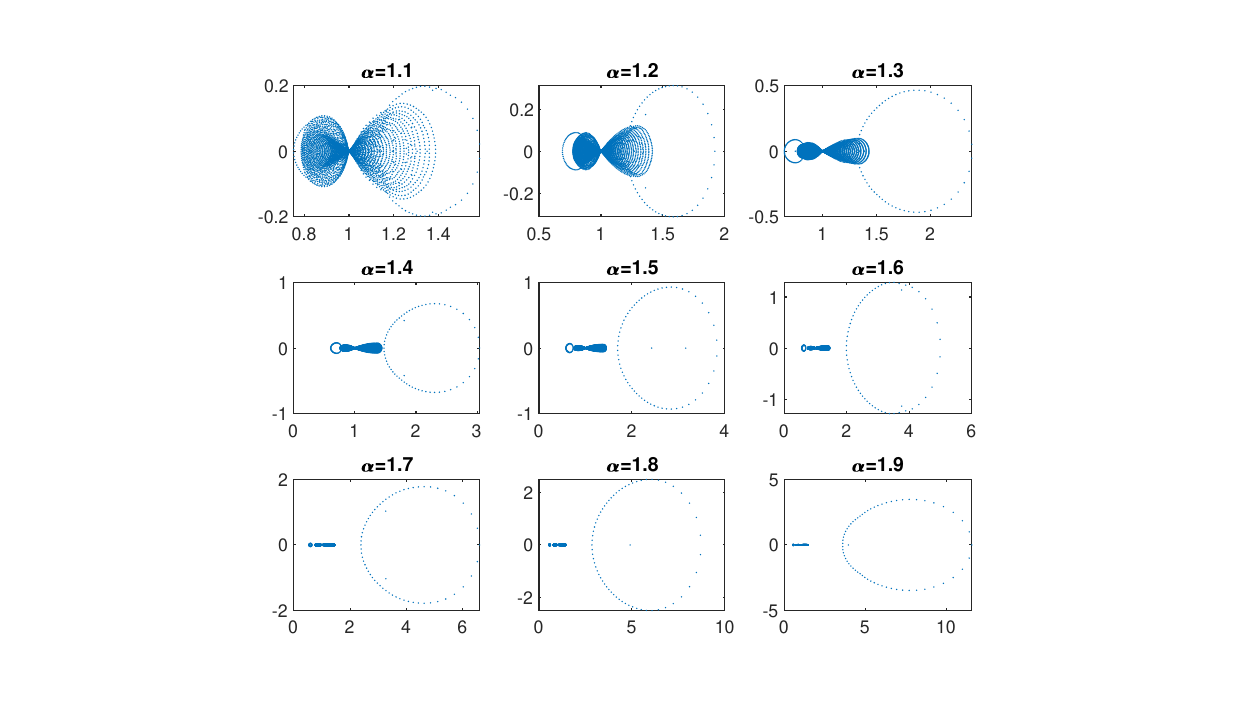}
\includegraphics[width=\textwidth]{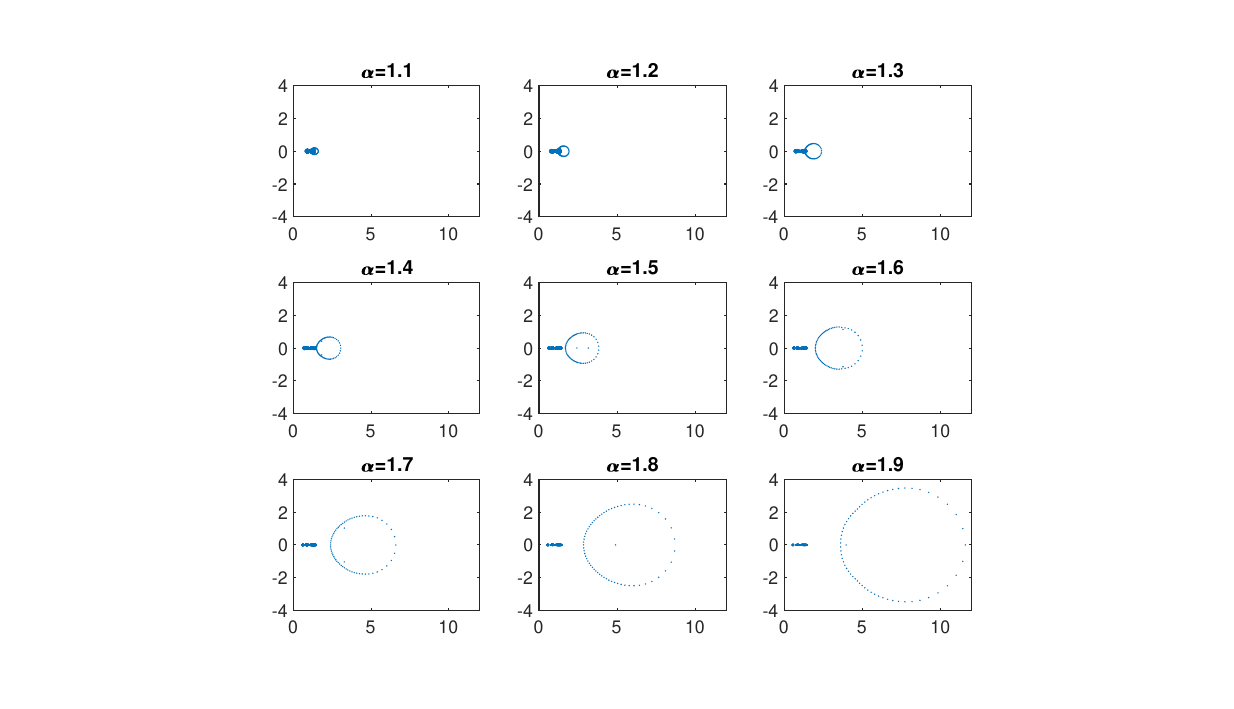}
\caption{Eigenvalues of $\widehat{P}^{-1}_{\alpha,N,M}A_{\alpha,N,M}$ - $N=M=2^6$, $a(x)=x^2+1$.} \label{Fig:Pcircg0es2}
\end{figure}
%
\begin{figure}%
\centering
\includegraphics[width=\textwidth]{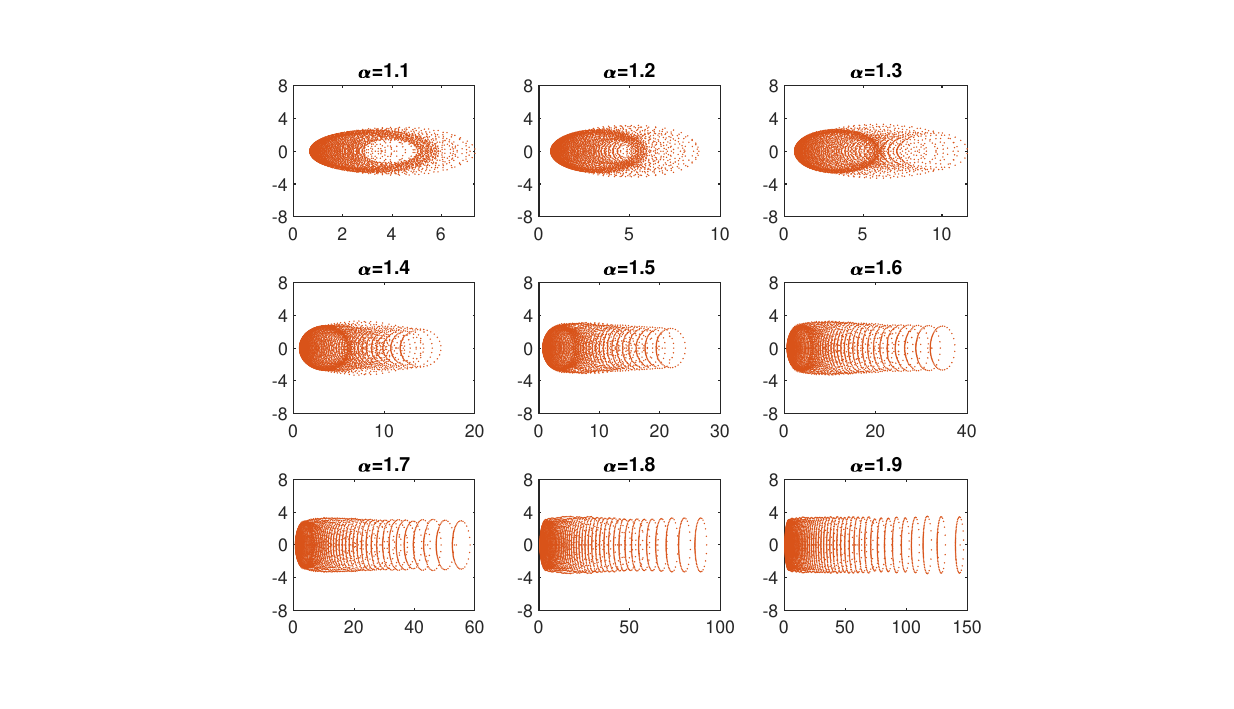}
\caption{Eigenvalues of $A_{\alpha,N,M}$  - $N=M=2^6$, $a(x)=x^2$.} \label{Fig:Aes4}
\end{figure}
\begin{figure}%
\centering
\includegraphics[width=\textwidth]{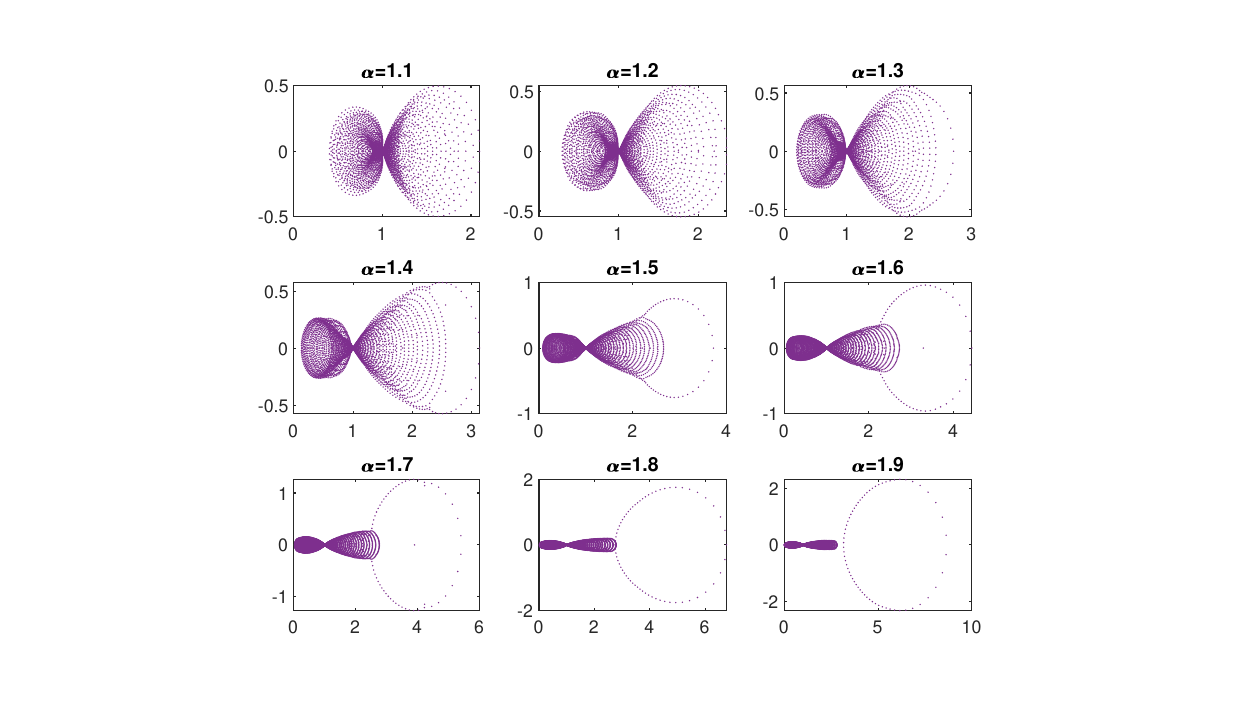}
\includegraphics[width=\textwidth]{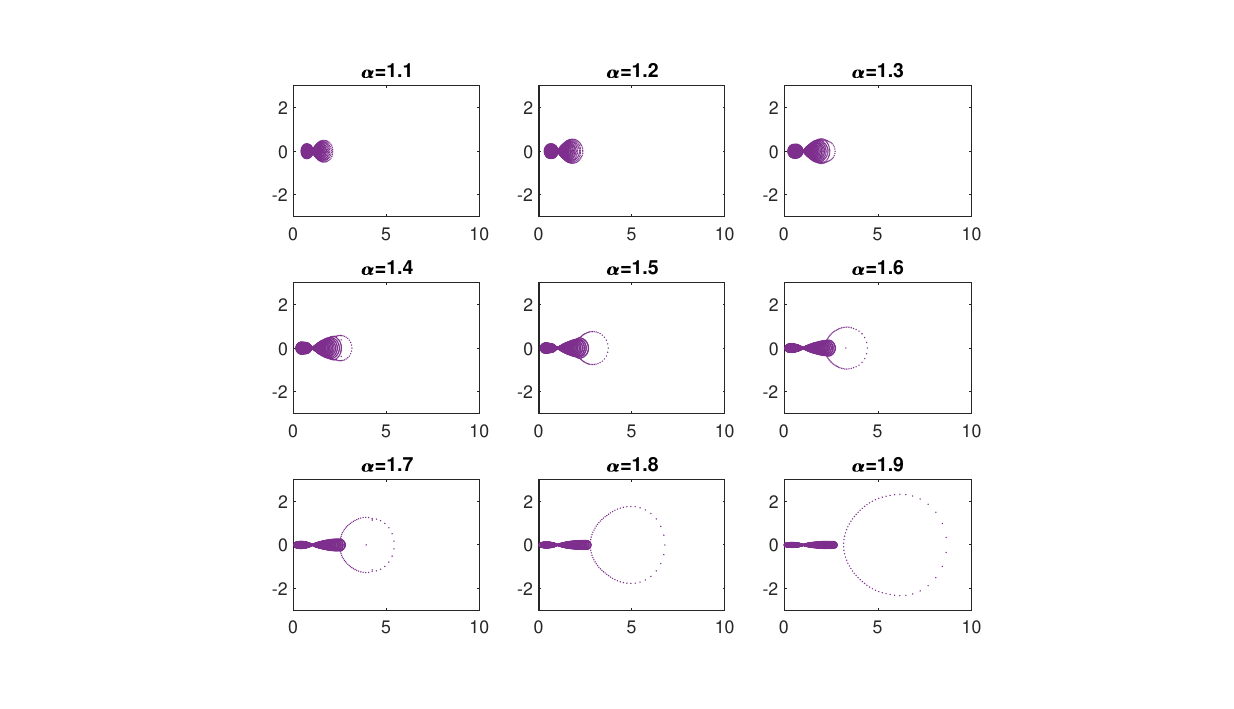}
\caption{Eigenvalues of $\widetilde{P}^{-1}_{\alpha,N,M}A_{\alpha,N,M}$ - $N=M=2^6$, $a(x)=x^2$.} \label{Fig:PcircStranges4}
\end{figure}
\begin{figure}%
\centering
\includegraphics[width=\textwidth]{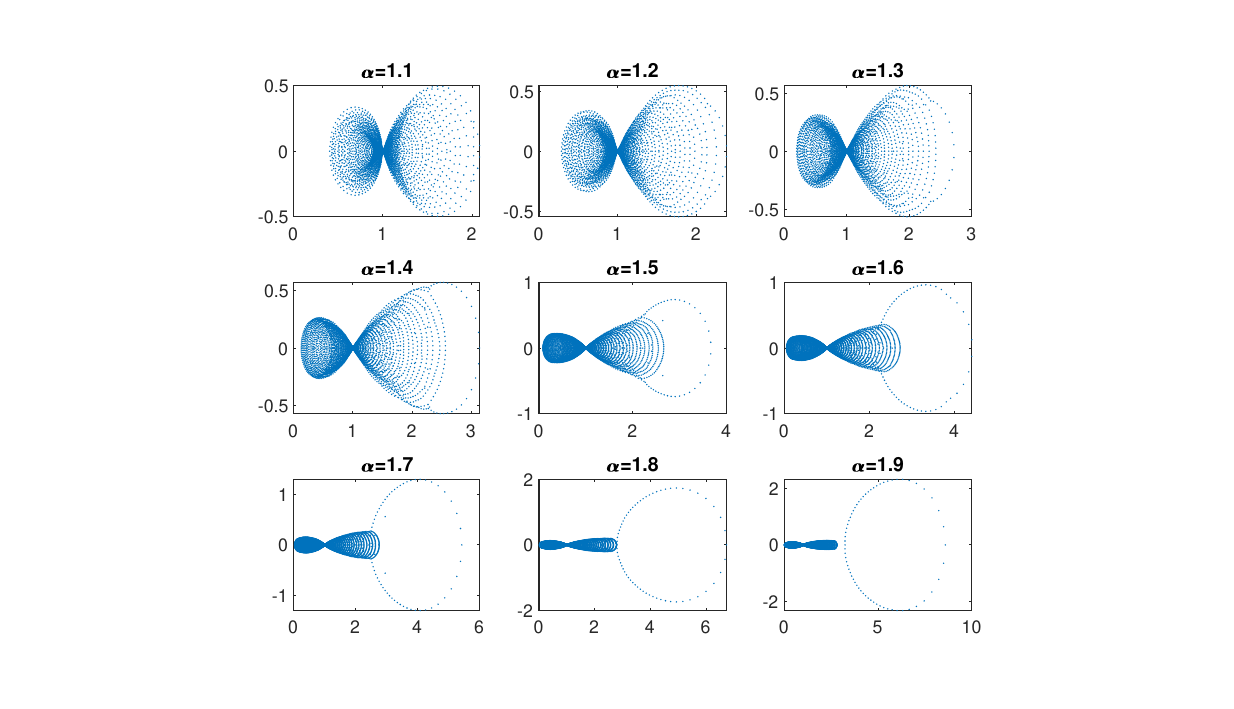}
\includegraphics[width=\textwidth]{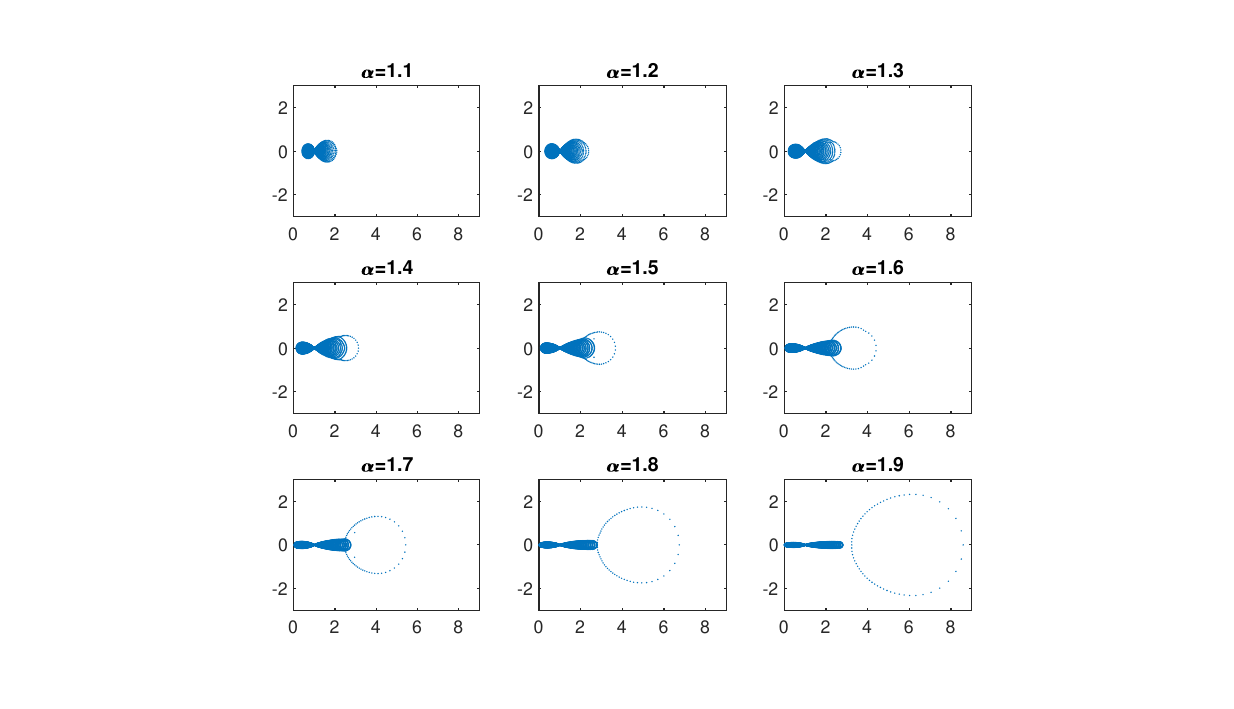}
\caption{Eigenvalues of $\widehat{P}^{-1}_{\alpha,N,M}A_{\alpha,N,M}$  - $N=M=2^6$, $a(x)=x^2$.} \label{Fig:Pcircg0es4}
\end{figure}
\clearpage
{\subsection{GMRES performance in the $d$-dimensional space setting, $d=1,2$}}
We consider the one-dimensional variable-coefficient space-fractional diffusion equation (\ref{eq2-1d}):
\begin{equation*}
\frac{\partial u(x,t)}{\partial t} = x^2\, D_{x}^{\alpha} u(x,t) + f(x,t), \quad x \in [0, 1],\, t \in [0, 1],
\end{equation*}
with homogeneous Dirichlet boundary conditions $u(0, t) = u(1, t) = 0$ and initial condition $u(x, 0) = 0$.

We choose the exact solution as
\begin{equation*}
u(x, t) = t^2 x^2 (1-x)^2.
\end{equation*}
The corresponding source term is given by
\begin{align*}
f(x, t) =\; & 2t\, x^2 (1-x)^2  - x^2\, t^2 \left[
\frac{2}{\Gamma(3-\alpha)} x^{2-\alpha}
- \frac{12}{\Gamma(4-\alpha)} x^{3-\alpha}
+ \frac{24}{\Gamma(5-\alpha)} x^{4-\alpha}
\right].
\end{align*}
Thus, in this section, we investigate the practical performance of the GMRES iterative solver for the all-at-once linear system arising from the one-dimensional discretization given in equation~\eqref{equ12}, with the time-stepping parameter set to $\theta = 0.5$ (Crank–Nicolson method). We compare the behavior of GMRES applied to both the original (non-preconditioned) system and the system preconditioned by a circulant structure-preserving preconditioners as defined in Section \ref{sec:prec}, to illustrate the impact of preconditioning on convergence and computational efficiency. First, the experiments are conducted for the one-dimensional constant-coefficient fractional diffusion problem ($a(x)=1$), and then for the one-dimensional variable-coefficient fractional diffusion problem
with coefficient $a(x) = x^2+1$, and $a(x) = x^2$.
The initial vector is set to zero, the tolerance is $10^{-8}$, and the maximum number of iterations equals the matrix dimension. Tables~\ref{tab:gmres-iter-es1}, \ref{tab:gmres-iter-es2}, and \ref{tab:gmres-iter-es4} summarize the numerical performance of the GMRES method both for the unpreconditioned system and the preconditioned ones.
Here, $N$ denotes the number of spatial grid points and $M$ the number of time levels and the average number of GMRES iterations per time step are reported in the columns.
Tables~\ref{tab:Timings_es1} and \ref{tab:Timings_es2}
report the averaged total computational time (in seconds) for solving the entire problem 10 times.
\par
As the discretization becomes finer, the unpreconditioned system  exhibits a substantial growth in both iteration counts and CPU times, reflecting the increasing ill-conditioning of the coefficient matrix.
The computational effort is mainly influenced by the spatial parameter $N$, as larger $N$ values lead to denser matrices and slower convergence, while variations in $M$ have a comparatively smaller effect.
\par
When the preconditioners are applied, GMRES shows a significant improvement in convergence.
The iteration numbers remain nearly constant for increasing $N$ and $M$, and the corresponding CPU times grow only mildly with the problem size.
This behavior demonstrates the mesh-independent convergence of the preconditioned system and confirms that the preconditioners effectively captures the spectral structure of the matrix, resulting in a well-clustered spectrum and reduced computational cost.
\begin{table}
\centering
\renewcommand{\arraystretch}{1}
\setlength{\tabcolsep}{6pt}
\begin{tabular}{|c c |ccc|ccc|ccc|}
\hline
\multicolumn {2}{|c}{} & \multicolumn {3}{|c}{$\alpha = 1.3$} & \multicolumn {3}{|c}{$\alpha = 1.5$} & \multicolumn {3}{|c|}{$\alpha = 1.8$} \\
\hline
$N$ & $M$ & $A$ & $\widetilde{P}$ &  $\widehat{P}$ & $-$ & $\widetilde{P}$ &  $\widehat{P}$ & $-$ & $\widetilde{P}$ &  $\widehat{P}$  \\
\hline
$2^4$ &$2^4$ &51 &13 &13 &59 &16 &16 &81 &18 &18 \\
$2^4$ &$2^5$ &79 &13 &13 &89 &16 &16 &120 &20 &20 \\
$2^4$ &$2^6$ &199 &13 &13 &200 &16 &16 &223 &20 &20 \\
$2^4$ &$2^7$ &626 &13 &13 &609 &16 &16 &604 &20 &20 \\
\hline
$2^5$ &$2^4$ &82 &15 &14 &98 &17 &17 &140 &20 &20 \\
$2^5$ &$2^5$ &112 &15 &15 &135 &18 &18 &200 &24 &24 \\
$2^5$ &$2^6$ &229 &15 &15 &247 &18 &18 &327 &24 &24 \\
$2^5$ &$2^7$ &717 &15 &15 &699 &18 &18 &751 &24 &24 \\
\hline
$2^6$ &$2^4$ &145 &16 &16 &175 &20 &19 &253 &22 &22 \\
$2^6$ &$2^5$ &181 &16 &16 &232 &21 &20 &355 &28 &28 \\
$2^6$ &$2^6$ &300 &16 &16 &365 &21 &20 &545 &29 &28 \\
$2^6$ &$2^7$ &791 &16 &16 &841 &21 &20 &1102 &29 &28 \\
\hline
$2^7$ &$2^4$ &273 &17 &17 &329 &21 &21 &481 &22 &22 \\
$2^7$ &$2^5$ &318 &17 &17 &427 &24 &23 &666 &33 &34 \\
$2^7$ &$2^6$ &451 &18 &17 &617 &24 &23 &996 &34 &35 \\
$2^7$ &$2^7$ &957 &18 &17 &1180 &24 &23 &1864 &34 &35 \\
\hline
\end{tabular}
\caption{Iterations  required for solving the unpreconditioned $A$ and preconditioned linear system with $\widetilde{P} =\widetilde{P}_{\alpha,N,M}$ and $\widehat{P} = \widehat{P}_{\alpha,N,M}$ with the GMRES method - $a(x)=1$.}
\label{tab:gmres-iter-es1}
\end{table}
\begin{table}
\centering
\renewcommand{\arraystretch}{1}
\setlength{\tabcolsep}{3pt}
\begin{tabular}{|c c |ccc|ccc|ccc|}
\hline
\multicolumn {2}{|c}{} & \multicolumn {3}{|c}{$\alpha = 1.3$} & \multicolumn {3}{|c}{$\alpha = 1.5$} & \multicolumn {3}{|c|}{$\alpha = 1.8$} \\
\hline
$N$ & $M$ & $A$ & $\widetilde{P}^{-1}A$ & $\widehat{P}^{-1}A$ & $A$ & $\widetilde{P}^{-1}A$ & $\widehat{P}^{-1}A$ & $A$ & $\widetilde{P}^{-1}A$ & $\widehat{P}^{-1}A$ \\
\hline
$2^4$ &$2^4$ &0.0035 &0.0132 &0.0058 &0.0023 &0.0069 &0.0066 &0.0005 &0.0081 &0.0061 \\
$2^4$ &$2^5$ &0.0014 &0.0059 &0.0049 &0.0009 &0.0076 &0.0055 &0.0010 &0.0074 &0.0067 \\
$2^4$ &$2^6$ &0.0043 &0.0085 &0.0077 &0.0043 &0.0090 &0.0077 &0.0051 &0.0113 &0.0092 \\
$2^4$ &$2^7$ &0.0758 &0.0113 &0.0107 &0.0701 &0.0133 &0.0116 &0.0693 &0.0161 &0.0141 \\
\hline
$2^5$ &$2^4$ &0.0006 &0.0070 &0.0065 &0.0008 &0.0086 &0.0120 &0.0014 &0.0088 &0.0162 \\
$2^5$ &$2^5$ &0.0015 &0.0098 &0.0081 &0.0024 &0.0112 &0.0099 &0.0058 &0.0124 &0.0118 \\
$2^5$ &$2^6$ &0.0098 &0.0160 &0.0146 &0.0124 &0.0180 &0.0170 &0.0191 &0.0222 &0.0226 \\
$2^5$ &$2^7$ &0.3930 &0.0250 &0.0255 &0.3764 &0.0297 &0.0290 &0.4587 &0.0393 &0.0501 \\
\hline
$2^6$ &$2^4$ &0.0023 &0.0371 &0.0141 &0.0032 &0.0180 &0.0179 &0.0079 &0.0221 &0.0266 \\
$2^6$ &$2^5$ &0.0091 &0.0237 &0.0183 &0.0110 &0.0256 &0.0237 &0.0291 &0.0364 &0.0461 \\
$2^6$ &$2^6$ &0.0917 &0.0457 &0.0403 &0.2056 &0.0571 &0.0542 &0.3715 &0.0830 &0.0958 \\
$2^6$ &$2^7$ &1.7004 &0.0815 &0.0771 &1.8737 &0.0944 &0.0879 &3.8111 &0.2778 &0.2250 \\
\hline
$2^7$ &$2^4$ &0.0184 &0.0364 &0.0336 &0.0250 &0.0396 &0.0395 &0.0940 &0.0738 &0.0731 \\
$2^7$ &$2^5$ &0.1173 &0.0551 &0.0916 &0.2095 &0.0733 &0.0825 &0.4578 &0.1577 &0.1877 \\
$2^7$ &$2^6$ &0.6572 &0.1909 &0.1518 &1.4853 &0.1912 &0.3413 &3.3912 &0.2840 &0.3046 \\
$2^7$ &$2^7$ &4.2326 &0.2572 &0.1810 &9.4866 &0.4533 &0.2786 &16.0755 &0.4172 &0.4750 \\
\hline
\end{tabular}
\caption{Averaged timings for iterations in Table \ref{tab:gmres-iter-es1} repeated 10 times with  $A= A_{\alpha,N,M}$,
$\widetilde{P}^{-1}A =\widetilde{P}^{-1}_{\alpha,N,M}A_{\alpha,N,M}$, and $\widehat{P}^{-1}A = \widehat{P}^{-1}_{\alpha,N,M}A_{\alpha,N,M}$ - case $a(x)=1$.}
\label{tab:Timings_es1}
\end{table}
%
\begin{table}
\centering
\renewcommand{\arraystretch}{1}
\setlength{\tabcolsep}{6pt}
\begin{tabular}{|c c |ccc|ccc|ccc|}
\hline
\multicolumn {2}{|c}{} & \multicolumn {3}{|c}{$\alpha = 1.3$} & \multicolumn {3}{|c}{$\alpha = 1.5$} & \multicolumn {3}{|c|}{$\alpha = 1.8$} \\
\hline
$N$ & $M$ & $A$ & $\widetilde{P}$ &  $\widehat{P}$ & $-$ & $\widetilde{P}$ &  $\widehat{P}$ & $-$ & $\widetilde{P}$ &  $\widehat{P}$  \\
\hline
$2^4$ &$2^4$ &55 &15 &15 &67 &18 &18 &96 &21 &21 \\
$2^4$ &$2^5$ &84 &16 &16 &98 &19 &19 &139 &24 &24 \\
$2^4$ &$2^6$ &198 &16 &16 &201 &19 &19 &235 &24 &24 \\
$2^4$ &$2^7$ &618 &16 &16 &604 &19 &19 &600 &24 &24 \\
\hline
$2^5$ &$2^4$ &91 &17 &18 &114 &21 &21 &171 &25 &25 \\
$2^5$ &$2^5$ &127 &18 &18 &157 &23 &23 &244 &31 &31 \\
$2^5$ &$2^6$ &234 &18 &18 &263 &23 &23 &381 &31 &31 \\
$2^5$ &$2^7$ &701 &18 &18 &693 &23 &23 &793 &31 &31 \\
\hline
$2^6$ &$2^4$ &162 &19 &19 &207 &23 &23 &314 &26 &25 \\
$2^6$ &$2^5$ &211 &20 &20 &273 &26 &26 &444 &36 &36 \\
$2^6$ &$2^6$ &324 &20 &20 &411 &26 &26 &681 &38 &38 \\
$2^6$ &$2^7$ &787 &20 &20 &875 &27 &26 &1282 &38 &38 \\
\hline
$2^7$ &$2^4$ &302 &21 &21 &394 &25 &25 &594 &26 &26 \\
$2^7$ &$2^5$ &377 &22 &22 &506 &30 &30 &834 &40 &39 \\
$2^7$ &$2^6$ &526 &22 &22 &730 &31 &30 &1287 &45 &45 \\
$2^7$ &$2^7$ &1013 &22 &22 &1321 &31 &30 &2365 &46 &45 \\
\hline
\end{tabular}
\caption{Iterations  required for solving the unpreconditioned $A$ and preconditioned linear system with $\widetilde{P} =\widetilde{P}_{\alpha,N,M}$ and $\widehat{P} = \widehat{P}_{\alpha,N,M}$ with the GMRES method - $a(x)=x^2+1$.}
\label{tab:gmres-iter-es2}
\end{table}
\begin{table}
\centering
\renewcommand{\arraystretch}{1}
\setlength{\tabcolsep}{3pt}
\begin{tabular}{|c c |ccc|ccc|ccc|}
\hline
\multicolumn {2}{|c}{} & \multicolumn {3}{|c}{$\alpha = 1.3$} & \multicolumn {3}{|c}{$\alpha = 1.5$} & \multicolumn {3}{|c|}{$\alpha = 1.8$} \\
\hline
$N$ & $M$ & $A$ & $\widetilde{P}^{-1}A$ & $\widehat{P}^{-1}A$ & $A$ & $\widetilde{P}^{-1}A$ & $\widehat{P}^{-1}A$ & $A$ & $\widetilde{P}^{-1}A$ & $\widehat{P}^{-1}A$ \\
\hline
$2^4$ &$2^4$ &0.0008 &0.0059 &0.0045 &0.0003 &0.0063 &0.0049 &0.0079 &0.0181 &0.0077 \\
$2^4$ &$2^5$ &0.0007 &0.0054 &0.0067 &0.0008 &0.0058 &0.0057 &0.0037 &0.0092 &0.0097 \\
$2^4$ &$2^6$ &0.0044 &0.0087 &0.0079 &0.0039 &0.0087 &0.0086 &0.0062 &0.0134 &0.0107 \\
$2^4$ &$2^7$ &0.0812 &0.0235 &0.0203 &0.0651 &0.0173 &0.0133 &0.0744 &0.0167 &0.0169 \\
\hline
$2^5$ &$2^4$ &0.0007 &0.0174 &0.0167 &0.0010 &0.0095 &0.0101 &0.0021 &0.0160 &0.0113 \\
$2^5$ &$2^5$ &0.0048 &0.0192 &0.0138 &0.0026 &0.0123 &0.0116 &0.0061 &0.0174 &0.0157 \\
$2^5$ &$2^6$ &0.0233 &0.0338 &0.0323 &0.0131 &0.0212 &0.0222 &0.0296 &0.0350 &0.0587 \\
$2^5$ &$2^7$ &0.5191 &0.0555 &0.0515 &0.3705 &0.0354 &0.0364 &0.6981 &0.1120 &0.1018 \\
\hline
$2^6$ &$2^4$ &0.0053 &0.0346 &0.0385 &0.0042 &0.0199 &0.0223 &0.0163 &0.0395 &0.0371 \\
$2^6$ &$2^5$ &0.0134 &0.0436 &0.0415 &0.0145 &0.0268 &0.0285 &0.0475 &0.0614 &0.0611 \\
$2^6$ &$2^6$ &0.1129 &0.0620 &0.0863 &0.1322 &0.0492 &0.0691 &0.4915 &0.1285 &0.1319 \\
$2^6$ &$2^7$ &1.5034 &0.1394 &0.1122 &1.8962 &0.1108 &0.1084 &4.3610 &0.2908 &0.2636 \\
\hline
$2^7$ &$2^4$ &0.0378 &0.0749 &0.0612 &0.0327 &0.0457 &0.0459 &0.1315 &0.0986 &0.0871 \\
$2^7$ &$2^5$ &0.1386 &0.0995 &0.0609 &0.2669 &0.0844 &0.0802 &1.1671 &0.2150 &0.1563 \\
$2^7$ &$2^6$ &0.9735 &0.2023 &0.1671 &1.3623 &0.1680 &0.2396 &5.9787 &0.4148 &0.7292 \\
$2^7$ &$2^7$ &5.0880 &0.3373 &0.4153 &8.4437 &0.4889 &0.5794 &33.0959 &1.1614 &0.5574 \\
\hline
\end{tabular}
\caption{Averaged timings for iterations in Table \ref{tab:gmres-iter-es2} repeated 10 times with  $A= A_{\alpha,N,M}$,
$\widetilde{P}^{-1}A =\widetilde{P}^{-1}_{\alpha,N,M}A_{\alpha,N,M}$, and $\widehat{P}^{-1}A = \widehat{P}^{-1}_{\alpha,N,M}A_{\alpha,N,M}$ - case $a(x)=x^2+1$.}
\label{tab:Timings_es2}
\end{table}
%
\begin{table}
\centering
\renewcommand{\arraystretch}{1}
\setlength{\tabcolsep}{6pt}
\begin{tabular}{|c c |ccc|ccc|ccc|}
\hline
\multicolumn {2}{|c}{} & \multicolumn {3}{|c}{$\alpha = 1.3$} & \multicolumn {3}{|c}{$\alpha = 1.5$} & \multicolumn {3}{|c|}{$\alpha = 1.8$} \\
\hline
$N$ & $M$ & $A$ & $\widetilde{P}$ &  $\widehat{P}$ & $-$ & $\widetilde{P}$ &  $\widehat{P}$ & $-$ & $\widetilde{P}$ &  $\widehat{P}$  \\
\hline
$2^4$ &$2^4$ &47 &26 &26 &59 &38 &38 &86 &65 &65 \\
$2^4$ &$2^5$ &79 &27 &27 &87 &43 &43 &119 &82 &82 \\
$2^4$ &$2^6$ &211 &28 &28 &210 &45 &45 &227 &96 &96 \\
$2^4$ &$2^7$ &660 &28 &28 &656 &45 &45 &655 &99 &99 \\
\hline
$2^5$ &$2^4$ &71 &42 &42 &98 &66 &65 &154 &123 &123 \\
$2^5$ &$2^5$ &100 &47 &47 &129 &84 &84 &207 &168 &167 \\
$2^5$ &$2^6$ &232 &49 &49 &249 &101 &101 &343 &219 &218 \\
$2^5$ &$2^7$ &767 &49 &49 &757 &103 &103 &780 &261 &260 \\
\hline
$2^6$ &$2^4$ &119 &66 &66 &170 &109 &109 &284 &216 &215 \\
$2^6$ &$2^5$ &148 &86 &85 &210 &152 &151 &379 &271 &270 \\
$2^6$ &$2^6$ &284 &91 &90 &349 &195 &193 &602 &353 &351 \\
$2^6$ &$2^7$ &811 &93 &91 &843 &223 &223 &1161 &463 &461 \\
\hline
$2^7$ &$2^4$ &217 &99 &99 &317 &147 &146 &543 &198 &197 \\
$2^7$ &$2^5$ &251 &122 &120 &378 &195 &193 &718 &256 &255 \\
$2^7$ &$2^6$ &400 &138 &138 &571 &255 &250 &1124 &338 &336 \\
$2^7$ &$2^7$ &921 &153 &146 &1128 &315 &311 &2062 &462 &458 \\
\hline
\end{tabular}
\caption{Iterations  required for solving the unpreconditioned $A$ and preconditioned linear system with $\widetilde{P} =\widetilde{P}_{\alpha,N,M}$ and $\widehat{P} = \widehat{P}_{\alpha,N,M}$ with the GMRES method - $a(x)=x^2$.}
\label{tab:gmres-iter-es4}
\end{table}
\clearpage
{\subsection*{GMRES performance in the two-dimensional setting}}
Finally, we report the results of a few numerical tests performed in the the two-dimensional setting, both in the constant-coefficient case and in the variable-coefficient setting, where the diffusion function $a(x_1,x_2)$ is chosen as the most natural bivariate extension of the ones previously considered.
Though, discretization parameters are a bit small, due to the global dimension of the all-at-once systems,  no relevant difference is observed in the convergence properties, which we already reported and commented in the one-dimensional setting; see Table \ref{tab:Test12d}, Table \ref{tab:Test22d}, and Table \ref{tab:Test42d} for the numerics in the two-dimensional space case.
\par
It is worth stressing that we do not report numerical experiments when $\alpha_1 \neq \alpha_2$ since the results in practice are of the same quality as in the case $\alpha_1=\alpha_2$ (and even slightly better indeed): the reason of such behavior is contained in Theorem \ref{2D th:}, Theorem \ref{2D orig mat}, Theorem \ref{thm:inv(P)A clustering 2D}, where it is evident that $\alpha_1 \neq \alpha_2$ implies a simplification in the spectral behavior, because the matrix sequence associated to the smaller parameter is spectrally negligible, when the matrix-sizes increase.

\begin{table}
\centering
\begin{tabular}{|c c c|ccc|ccc|ccc|}
\hline
\multicolumn {3}{|c}{} & \multicolumn {3}{|c}{$\alpha_1 =\alpha_2 = 1.3$} & \multicolumn {3}{|c}{$\alpha_1 =\alpha_2  = 1.5$} & \multicolumn {3}{|c|}{$\alpha_1 =\alpha_2  = 1.8$} \\
\hline
$N_1$ &$N_2$ & $M$ & $A$ & $\widetilde{P}^{-1}A$ & $\widehat{P}^{-1}A$ & $A$ & $\widetilde{P}^{-1}A$ & $\widehat{P}^{-1}A$ & $A$ & $\widetilde{P}^{-1}A$ & $\widehat{P}^{-1}A$ \\
\hline
$2^2$ &$2^2$ &$2^2$ &16 &10 &10 &19 &11 &11 &21 &12 &12 \\
$2^2$ &$2^2$ &$2^3$ &22 &11 &11 &25 &12 &12 &27 &13 &14 \\
$2^2$ &$2^2$ &$2^4$ &32 &11 &12 &35 &13 &13 &40 &15 &16 \\
$2^2$ &$2^2$ &$2^5$ &65 &11 &12 &66 &13 &14 &68 &16 &16 \\
\hline
$2^3$ &$2^3$ &$2^2$ &27 &13 &13 &31 &14 &15 &37 &15 &15 \\
$2^3$ &$2^3$ &$2^3$ &33 &13 &13 &37 &15 &15 &45 &17 &17 \\
$2^3$ &$2^3$ &$2^4$ &45 &14 &14 &51 &16 &16 &63 &18 &18 \\
$2^3$ &$2^3$ &$2^5$ &77 &14 &14 &81 &16 &17 &97 &20 &20 \\
\hline
$2^4$ &$2^4$ &$2^2$ &49 &15 &15 &55 &17 &17 &67 &18 &17 \\
$2^4$ &$2^4$ &$2^3$ &56 &16 &16 &62 &19 &19 &79 &21 &21 \\
$2^4$ &$2^4$ &$2^4$ &69 &16 &16 &79 &19 &19 &106 &23 &23 \\
$2^4$ &$2^4$ &$2^5$ &103 &16 &16 &116 &20 &20 &155 &25 &25 \\
\hline
$2^5$ &$2^5$ &$2^2$ &91 &18 &18 &102 &20 &20 &124 &20 &20 \\
$2^5$ &$2^5$ &$2^3$ &101 &18 &18 &112 &22 &22 &145 &24 &24 \\
$2^5$ &$2^5$ &$2^4$ &118 &18 &18 &139 &23 &23 &189 &29 &29 \\
$2^5$ &$2^5$ &$2^5$ &160 &19 &18 &187 &24 &24 &268 &31 &31 \\
\hline
\end{tabular}
\caption{Iterations  required for solving the unpreconditioned $A$ and preconditioned linear system with $\widetilde{P} =\widetilde{P}_{\boldsymbol{\alpha},\textbf{n},M}$ and $\widehat{P} = \widehat{P}_{\boldsymbol{\alpha},\textbf{n},M}$ with the GMRES method - $a(x_1,x_2)=1$, $\theta = 0.5$.} \label{tab:Test12d}
\end{table}
\begin{table}
\centering
\begin{tabular}{|c c c|ccc|ccc|ccc|}
\hline
\multicolumn {3}{|c}{} & \multicolumn {3}{|c}{$\alpha_1 =\alpha_2 = 1.3$} & \multicolumn {3}{|c}{$\alpha_1 =\alpha_2  = 1.5$} & \multicolumn {3}{|c|}{$\alpha_1 =\alpha_2  = 1.8$} \\
\hline
$N_1$ &$N_2$ & $M$ & $A$ & $\widetilde{P}^{-1}A$ & $\widehat{P}^{-1}A$ & $A$ & $\widetilde{P}^{-1}A$ & $\widehat{P}^{-1}A$ & $A$ & $\widetilde{P}^{-1}A$ & $\widehat{P}^{-1}A$ \\
\hline
$2^2$ &$2^2$ &$2^2$ &20 &13 &13 &22 &14 &14 &24 &14 &14 \\
$2^2$ &$2^2$ &$2^3$ &25 &14 &14 &28 &15 &15 &31 &16 &16 \\
$2^2$ &$2^2$ &$2^4$ &36 &14 &15 &40 &16 &16 &45 &18 &19 \\
$2^2$ &$2^2$ &$2^5$ &66 &14 &15 &67 &16 &17 &74 &20 &20 \\
\hline
$2^3$ &$2^3$ &$2^2$ &33 &15 &15 &38 &17 &17 &44 &17 &17 \\
$2^3$ &$2^3$ &$2^3$ &40 &16 &16 &45 &18 &18 &54 &21 &20 \\
$2^3$ &$2^3$ &$2^4$ &53 &17 &17 &62 &19 &19 &77 &23 &23 \\
$2^3$ &$2^3$ &$2^5$ &82 &17 &17 &93 &20 &20 &118 &25 &25 \\
\hline
$2^4$ &$2^4$ &$2^2$ &59 &19 &18 &67 &21 &20 &84 &21 &21 \\
$2^4$ &$2^4$ &$2^3$ &68 &19 &19 &78 &23 &22 &101 &25 &25 \\
$2^4$ &$2^4$ &$2^4$ &86 &20 &20 &102 &23 &23 &139 &29 &29 \\
$2^4$ &$2^4$ &$2^5$ &120 &21 &20 &146 &25 &24 &207 &32 &31 \\
\hline
$2^5$ &$2^5$ &$2^2$ &112 &22 &22 &127 &25 &24 &162 &27 &26 \\
$2^5$ &$2^5$ &$2^3$ &125 &23 &23 &144 &27 &27 &193 &31 &30 \\
$2^5$ &$2^5$ &$2^4$ &152 &23 &23 &183 &28 &28 &259 &36 &36 \\
$2^5$ &$2^5$ &$2^5$ &198 &24 &24 &251 &30 &30 &381 &39 &39 \\
\hline
\end{tabular}
\caption{Iterations  required for solving the unpreconditioned $A$ and preconditioned linear system with $\widetilde{P} =\widetilde{P}_{\boldsymbol{\alpha},\textbf{n},M}$ and $\widehat{P} = \widehat{P}_{\boldsymbol{\alpha},\textbf{n},M}$ with the GMRES method - $a(x_1,x_2)=x_1^2+x_2^2+1$, $\theta = 0.5$.} \label{tab:Test22d}
\end{table}
\begin{table}
\centering
\begin{tabular}{|c c c|ccc|ccc|ccc|}
\hline
\multicolumn {3}{|c}{} & \multicolumn {3}{|c}{$\alpha_1 =\alpha_2 = 1.3$} & \multicolumn {3}{|c}{$\alpha_1 =\alpha_2  = 1.5$} & \multicolumn {3}{|c|}{$\alpha_1 =\alpha_2  = 1.8$} \\
\hline
$N_1$ &$N_2$ & $M$ & $A$ & $\widetilde{P}^{-1}A$ & $\widehat{P}^{-1}A$ & $A$ & $\widetilde{P}^{-1}A$ & $\widehat{P}^{-1}A$ & $A$ & $\widetilde{P}^{-1}A$ & $\widehat{P}^{-1}A$ \\
\hline
$2^2$ &$2^2$ &$2^2$ &23 &18 &18 &27 &20 &21 &32 &23 &23 \\
$2^2$ &$2^2$ &$2^3$ &27 &19 &18 &31 &22 &22 &38 &26 &26 \\
$2^2$ &$2^2$ &$2^4$ &37 &19 &19 &42 &23 &23 &50 &29 &30 \\
$2^2$ &$2^2$ &$2^5$ &72 &20 &20 &73 &25 &24 &79 &32 &32 \\
\hline
$2^3$ &$2^3$ &$2^2$ &41 &29 &29 &50 &36 &35 &63 &44 &44 \\
$2^3$ &$2^3$ &$2^3$ &44 &29 &29 &54 &38 &37 &70 &52 &51 \\
$2^3$ &$2^3$ &$2^4$ &54 &30 &30 &66 &42 &42 &88 &63 &63 \\
$2^3$ &$2^3$ &$2^5$ &86 &32 &32 &96 &46 &46 &122 &74 &74 \\
\hline
$2^4$ &$2^4$ &$2^2$ &76 &41 &41 &93 &57 &56 &130 &79 &78 \\
$2^4$ &$2^4$ &$2^3$ &77 &44 &43 &96 &61 &61 &141 &94 &93 \\
$2^4$ &$2^4$ &$2^4$ &87 &48 &48 &111 &74 &74 &166 &125 &124 \\
$2^4$ &$2^4$ &$2^5$ &116 &54 &53 &144 &91 &90 &217 &152 &150 \\
\hline
$2^5$ &$2^5$ &$2^2$ &149 &56 &55 &186 &86 &84 &253 &123 &122 \\
$2^5$ &$2^5$ &$2^3$ &143 &61 &60 &187 &97 &95 &270 &158 &156 \\
$2^5$ &$2^5$ &$2^4$ &152 &74 &73 &204 &123 &122 &315 &202 &196 \\
$2^5$ &$2^5$ &$2^5$ &181 &93 &92 &247 &164 &161 &402 &249 &243 \\
\hline
\end{tabular}
\caption{Iterations  required for solving the unpreconditioned $A$ and preconditioned linear system with $\widetilde{P} =\widetilde{P}_{\boldsymbol{\alpha},\textbf{n},M}$ and $\widehat{P} = \widehat{P}_{\boldsymbol{\alpha},\textbf{n},M}$ with the GMRES method - $a(x_1,x_2)=x_1^2+x_2^2$, $\theta = 0.5$.} \label{tab:Test42d}
\end{table}

\section{Conclusions}\label{sec:end}

We have studied the spectral properties of large matrices and preconditioning of linear systems, arising from the finite difference discretization of a time-dependent space-fractional diffusion equation with a variable coefficient $a$ defined on $\Omega \subset \mathbb{R}^{d}$, $d=1,2$.

A preconditioning strategy has been developed based on the structural properties of the discretized operator. The spectral features of the unpreconditioned and preconditioned matrix sequences have been studied with the help of GLT theory. The main novelty is that the analysis fully covers the case where the variable coefficient $a$ is nonconstant. Numerical results have been reported and have been critically discussed.

Several open problems remain. The most relevant are reported below.

\begin{itemize}
\item The extension of the result in \cite{eig-nonH} to the variable coefficient case: this seems not trivial and it is not covered by the GLT theory due to the intrinsic non-Hermitian nature of the underlying matrix sequences;
\item Estimates of the number of the outliers and of the asymptotical behaviour of the outliers in the preconditioned matrix sequences.
\end{itemize}

While the first item has a theoretical appeal, the second could have a precise practical impact in estimating a priori the number of needed iterations of the proposed  preconditioned GMRES procedures for reaching a preassigned accuracy.



\begin{thebibliography}{99}


\bibitem{reduced}
Barbarino, G.:
{A systematic approach to reduced GLT},
\textit{BIT} \textbf{62(3)}, 681–-743 (2022)

\bibitem{Barbarino2020a}
Barbarino, G., Garoni, C., Serra-Capizzano, S.:
Block generalized locally Toeplitz sequences: theory and applications in the unidimensional case.
\textit{Electron. Trans. Numer. Anal.} \textbf{53}, 28--112 (2020)

\bibitem{Barbarino2020b}
Barbarino, G., Garoni, C., Serra-Capizzano, S.:
Block generalized locally Toeplitz sequences: theory and applications in the multidimensional case.
\textit{Electron. Trans. Numer. Anal.} \textbf{53}, 113--216 (2020)

\bibitem{BarbarinoSerra2020}
Barbarino, G., Serra-Capizzano, S.:
Non-Hermitian perturbations of Hermitian matrix-sequences and applications to the spectral analysis of the numerical approximation of partial differential equations.
\textit{Numer. Linear Algebra Appl.} \textbf{27}(3), e2286 (2020)

\bibitem{Benzi2002Preconditioning}
Benzi, M.:
Preconditioning techniques for large linear systems: a survey.
\textit{J. Comput. Phys.} \textbf{182}(2), 418--477 (2002)

\bibitem{Bhatia1997}
Bhatia, R.:
\textit{Matrix Analysis}.
Graduate Texts in Mathematics, vol. 169, Springer-Verlag, New York (1997)



\bibitem{eig-nonH}
Bogoya, J.M., Böttcher, A., Grudsky, S.M.:
Asymptotics of individual eigenvalues of a class of large Hessenberg Toeplitz matrices.
\textit{Operator Th.: Adv. Appl.} \textbf{220}, 77--95 (2012)


\bibitem{rev-CN}
Chan, R.H., Ng, M.:
Conjugate gradient methods for Toeplitz systems, \textit{SIAM Rev.} \textbf{38}, 427–-482 (1996)


\bibitem{Brociek2016}
Brociek, R.:
Crank-Nicolson scheme for space fractional heat conduction equation with mixed boundary condition.
In: Damaševičius, R., Napoli, C., Tramontana, E., Woźniak, M. (Eds.)
\textit{Proceedings of the Symposium for Young Scientists in Technology, Engineering and Mathematics}, CEUR Workshop Proceedings, 41--45 (2016).

\bibitem{ChenDeng2014}
Chen, M.H., Deng, W.H.:
Fourth order accurate scheme for the space fractional diffusion equations.
\textit{SIAM J. Numer. Anal.} \textbf{52}, 1418--1438 (2014)


\bibitem{Commenges1984}
Commenges, D., Monsion, M.:
Fast inversion of triangular Toeplitz matrices.
\textit{IEEE Trans. Autom. Control},
29(3), 250--251 (1984)

\bibitem{donatelli2016}
Donatelli, M., Mazza, M., Serra-Capizzano, S.:
Spectral analysis and structure preserving preconditioners for fractional diffusion equations.
\textit{J. Comput. Phys.} \textbf{307}, 262--279 (2016)

\bibitem{superopt-ESC}
Estatico, C., Serra-Capizzano, S.: Superoptimal approximation for unbounded symbols.
\textit{Linear Algebra Appl.} \textbf{428}(2/3), 564–-585 (2008)


\bibitem{tutorial2}
Garoni, C.:
Introduction to the theory of generalized locally Toeplitz sequences and its applications.
\textit{Math. Notes} (2025), in press; url{https://arxiv.org/abs/2506.02151}


\bibitem{GaroniSerra2017}
Garoni, C., Serra-Capizzano, S.:
\textit{Generalized Locally Toeplitz Sequences: Theory and Applications. Vol. I}.
Springer, Cham (2017)

\bibitem{GaroniSerra2018}
Garoni, C., Serra-Capizzano, S.:
\textit{Generalized Locally Toeplitz Sequences: Theory and Applications. Vol. II}.
Springer, Cham (2018)

\bibitem{tutorial1}
Garoni, C., Serra-Capizzano, S.:
Generalized locally Toeplitz sequences: a spectral analysis tool for discretized differential equations.
Splines and PDEs: from approximation theory to numerical linear algebra, 161--236,
Lecture Notes in Math., 2219, Springer, Cham (2018)


\bibitem{Golinskii2007}
Golinskii, L., Serra-Capizzano, S.:
The asymptotic properties of the spectrum of nonsymmetrically perturbed Jacobi matrix sequences.
\textit{J. Approx. Theory} \textbf{144}(1), 84--102 (2007)

\bibitem{hon2024}
Hon, S., Fung, P.Y., Dong, J., Serra-Capizzano, S.:
A sine transform based preconditioned MINRES method for all-at-once systems from constant and variable-coefficient evolutionary PDEs.
\textit{Numer. Algor.} \textbf{95}(4), 1769--1799 (2024)


\bibitem{LinNgSun2018}
Lin, X.-L., Ng, M.K., Sun, H.-W.:
Efficient preconditioner of one-sided space fractional diffusion equation.
\textit{BIT Numer. Math.} \textbf{58}(3), 729--748 (2018)

\bibitem{lin2021}
Lin, X.-L., Ng, M.K.:
An all-at-once preconditioner for evolutionary partial differential equations.
\textit{SIAM J. Sci. Comput.} \textbf{43}(4), A2766--A2784 (2021)

\bibitem{Rosita-immersed}
Mazza, M., Serra-Capizzano, S., Sormani R.L.:
Algebra preconditionings for 2D Riesz distributed-order space-fractional diffusion equations on convex domains.
\textit{Numer. Linear Algebra Appl.} \textbf{31}(3), e2536 (2024)


\bibitem{mcdonald2018}
McDonald, E., Pestana, J., Wathen, A.:
Preconditioning and iterative solution of all-at-once systems for evolutionary partial differential equations.
\textit{SIAM J. Sci. Comput.} \textbf{40}(2), A1012--A1033 (2018)

\bibitem{Meerschaert2002}
Meerschaert, M.M., Benson, D.A., Scheffler, H.P., Becker-Kern, P.:
Governing equations and solutions of anomalous random walk limits.
\textit{Phys. Rev. E} \textbf{66}, 102R--105R (2002)

\bibitem{MeerschaertTadjeran2006}
Meerschaert, M.M., Tadjeran, C.:
Finite difference approximations for two-sided space-fractional partial differential equations.
\textit{Appl. Numer. Math.} \textbf{56}, 80--90 (2006)

\bibitem{Mortici2014}
Mortici, C.:
Methods and algorithms for approximating the gamma function and related functions. A survey. Part I: asymptotic series.
\textit{Ann. Acad. Rom. Sci. Ser. Math. Appl.} \textbf{6}, 173 (2014)

\bibitem{pan2016}
Pan, J.Y., Ng, M.K., Wang, H.:
Fast iterative solvers for linear systems arising from time-dependent space-fractional diffusion equations.
\textit{SIAM J. Sci. Comput.} \textbf{38}(5), A2806--A2826 (2016)

\bibitem{Saad1986GMRES}
Saad, Y., Schultz, M.H.:
GMRES: A generalized minimal residual algorithm for solving nonsymmetric linear systems.
\textit{SIAM J. Sci. Stat. Comput.} \textbf{7}(3), 856--869 (1986)



\bibitem{Toe-diff}
Serra-Capizzano, S.:
The rate of convergence of Toeplitz based PCG methods for second order nonlinear boundary value problems.
\textit{Numer. Math.} \textbf{81}(3), 461--495 (1999)

\bibitem{Serra2001}
Serra-Capizzano, S.:
Spectral behavior of matrix sequences and discretized boundary value problems.
\textit{Linear Algebra Appl.} \textbf{337}, 37--78 (2001)

\bibitem{serra2003}
Serra-Capizzano, S.: {Generalized locally Toeplitz sequences: spectral analysis and applications to discretized partial differential equations},
\textit{Linear Algebra Appl.} \textbf{366}, 371–-402 (2003)


\bibitem{serra2006}
Serra-Capizzano, S.:
{The GLT class as a generalized Fourier analysis and applications},
\textit{Linear Algebra Appl.} \textbf{419}, 180–-233 (2006)


\bibitem{she2023}
She, Z.-H., Qiu, L.-M.:
Fast TTTS iteration methods for implicit Runge--Kutta temporal discretization of Riesz space fractional advection--diffusion equations.
\textit{Comput. Math. Appl.} \textbf{141}, 42--53 (2023)



\bibitem{Tadjeran2007}
Tadjeran, C., Meerschaert, M.M.:
A second-order accurate numerical method for the two-dimensional fractional diffusion equation.
\textit{J. Comput. Phys.} \textbf{220}, 813--823 (2007)

\bibitem{Tadjeran2006}
Tadjeran, C., Meerschaert, M.M., Scheffler, H.P.:
A second-order accurate numerical approximation for the fractional diffusion equation.
\textit{J. Comput. Phys.} \textbf{213}, 205--213 (2006)

\bibitem{Tian2015}
Tian, W.Y., Zhou, H., Deng, W.H.:
A class of second order difference approximations for solving space fractional diffusion equations.
\textit{Math. Comput.} \textbf{84}, 1703--1727 (2015)

\bibitem{Wang2010}
Wang, H., Wang, K.X., Sircar, T.:
A direct \( \mathcal{O}(N \log^2 N) \) finite difference method for fractional diffusion equations.
\textit{J. Comput. Phys.} \textbf{229}, 8095--8104 (2010)


\bibitem{zyg-bible}
Zygmund, A.:
\textit{Trigonometric Series.}
Cambridge University Press, Cambridge (1959)


\end{thebibliography}
\end{document}